\theoremstyle{plain}
\newtheorem{theorem}{Theorem}[section]
\newtheorem{lemma}[theorem]{Lemma}
\newtheorem{proposition}[theorem]{Proposition}
\newtheorem{corollary}[theorem]{Corollary}
\theoremstyle{definition}
\newtheorem{definition}[theorem]{Definition}
\numberwithin{equation}{section}
\newcommand{\AlignFootnote}[1]{%
    \ifmeasuring@
    \else
        \footnote{#1}%
    \fi
}
\title{Finite groups in which every commutator has prime power order}
\author[M. Figueiredo]{Mateus Figueiredo} 
\address{Mateus Figueiredo: Department of Mathematics, University of Brasilia, Brasilia DF, Brazil}
\email{mt-figueiredo@hotmail.com}
\author[P. Shumyatsky]{Pavel Shumyatsky} 
\address{Pavel Shumyatsky: Department of Mathematics, University of Brasilia, Brasilia DF, Brazil}
\email{pavel@unb.br}
\thanks{This work was supported by CNPq.}
\keywords{Finite groups, commutators}
\subjclass[2020]{ 20F12, 20E34}
\begin{document}

\maketitle

\begin{abstract}
Finite groups in which every element has prime power order (EPPO-groups) are nowadays fairly well understood. For instance, if $G$ is a soluble EPPO-group, then the Fitting height of $G$ is at most 3 and $|\pi(G)|\leqslant2$ (Higman, 1957). Moreover, Suzuki showed that if $G$ is insoluble, then the soluble radical of $G$ is a 2-group and there are exactly eight nonabelian simple EPPO-groups. 

\noindent In the present work we concentrate on finite groups in which every commutator has prime power order (CPPO-groups). Roughly, we show that if $G$ is a CPPO-group, then the structure of $G'$ is similar to that of an EPPO-group. In particular, we show that the Fitting height of a soluble CPPO-group is at most 3 and $|\pi(G')|\leqslant3$. Moreover, if $G$ is insoluble, then $R(G')$ is a 2-group and $G'/R(G')$ is isomorphic to a simple EPPO-group. 
\end{abstract}

\section{Introduction}

Finite groups in which every element has prime power order (EPPO-groups for short) were first studied by Higman in \cite{higman}. In the literature these groups are sometimes  called CP-groups. Higman showed that a soluble EPPO-group has Fitting height at most 3. Moreover, the order of a soluble EPPO-group is divisible by at most two primes. Suzuki classified simple EPPO-groups in his celebrated work \cite{Suzuki}, finding that only eight simple EPPO-groups exist. These are the groups $PSL(2,q)$ $(q = 4,7,8, 9, 17)$, $PSL(3, 4)$, $Sz(8)$, $Sz(32)$.  Moreover Suzuki showed in \cite{suzuki0} that if $G$ is an insoluble EPPO-group, then the soluble radical $R(G)$ is a 2-group. Further clarifications of the structure of EPPO-groups were obtained in \cite{brandl} and \cite{tiedt}.

More recently, also infinite groups in which every element has prime power order have attracted attention. In particular, the reader can check the papers \cite{shu} and \cite{delgado} for the study of profinite and locally finite groups with that property.

In this paper we focus on finite groups in which the commutators have prime power orders (CPPO-groups). By a commutator we mean any element $a$ of a group $G$ for which there are $x,y\in G$ such that $a=[x,y]=x^{-1}y^{-1}xy$. As usual, $G'$ stands for the commutator subgroup of $G$. It is well-known that elements of $G'$ need not be commutators. On the other hand, by the celebrated verification of the Ore conjecture \cite{Oreconject}, every element of a nonabelian finite simple group is a commutator. Therefore any simple CPPO-group is an EPPO-group (from the above list determined by Suzuki). We do not know if the commutator subgroup of any CPPO-group is necessarily EPPO. This seems unlikely. Our main results can be summarized as follows.

\begin{theorem}\label{main1}
	Let $G$ be a soluble CPPO-group. Then
	\begin{enumerate}
		\item[(a)] The Fitting height of $G$ is at most $3$;
		\item[(b)] The order of $G'$ is divisible by at most $3$ primes.
	\end{enumerate}
\end{theorem}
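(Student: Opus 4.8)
The strategy is to argue by contradiction, choosing $G$ to be a counterexample of minimal order. Two closure properties are used throughout: a subgroup of a CPPO-group is again CPPO (its commutators are commutators of $G$), and a quotient of a CPPO-group is again CPPO (a commutator of $G/N$ is the image of a commutator of $G$, hence of prime-power order). Thus every proper section of $G$ satisfies (a) and (b), and the usual minimal-counterexample reductions for Fitting height become available. For (a): since a subgroup of $G_1\times G_2$ has Fitting height at most $\max\{h(G_1),h(G_2)\}$, the existence of two distinct minimal normal subgroups would contradict minimality, so $G$ has a unique minimal normal subgroup $N$; the equality $h(G)=h(G/\Phi(G))$ forces $\Phi(G)=1$; hence $F(G)=N=O_p(G)$ is elementary abelian, self-centralizing and complemented, so $G=N\rtimes H$ with $H\cong G/N$ acting faithfully and irreducibly on the $\F$-module $N$, and $h(H)=h(G)-1\ge 3$. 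For (b) one performs a parallel but more delicate reduction; after passing to a suitable section one is in a comparable situation, and the same kind of argument applies, so that (writing $p$ for the defining characteristic of the relevant module) one may assume that three primes other than $p$ divide the order of $H'$. In all cases it then suffices to derive a contradiction from the CPPO condition on $N\rtimes H$.

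The computational heart is the following elementary remark. For $h_1,h_2\in H$ and $n\in N$,
\[
[nh_1,h_2]=[n,h_2]^{h_1}[h_1,h_2]=\nu w,\qquad \nu=[n,h_2]^{h_1},\ \ w=[h_1,h_2],
\]
and as $n$ ranges over $N$ the factor $\nu$ ranges over the whole subgroup $[N,h_2]^{h_1}\le N$ while $w\in H'$ is fixed. Suppose $w\ne 1$; by hypothesis $w$ has prime-power order, and if $w$ is a $p'$-element then decompose $N=C_N(w)\oplus[N,w]$ as a $\langle w\rangle$-module. A one-line computation shows that $(\nu w)^{|w|}$ equals the $\langle w\rangle$-trace of $\nu$, an element of $C_N(w)$ that vanishes exactly when $\nu\in[N,w]$; since that trace commutes with $\nu w$, the element $\nu w$ has prime-power order precisely when $\nu\in[N,w]$ and otherwise has composite order $p\,|w|$. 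Hence CPPO forces
\[
[N,h_2]^{h_1}\le[N,\,[h_1,h_2]]
\]
whenever $[h_1,h_2]$ is a nontrivial $p'$-element (and symmetrically with $h_1,h_2$ interchanged). It therefore suffices to produce a single pair violating this: for instance an $h_2$ acting fixed-point-freely on $N$ (so $[N,h_2]=N$) together with an $h_1$ for which $w=[h_1,h_2]$ is a nontrivial $p'$-element \emph{not} acting fixed-point-freely on $N$ (so $[N,w]\subsetneq N$), for then $N\not\le[N,w]$, a contradiction.

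Producing such a pair is where the size of $H$ is spent. First, $H'$ is not a $p$-group — otherwise $H$ is abelian-by-($p$-group), of Fitting height at most $2$ — so $H$ has a nontrivial commutator that is a $p'$-element, hence of prime-power order. To turn this into a pair of the required shape I would work through the Fitting series of $H$ together with Clifford theory for $N$: using that $H/F(H)$ is non-nilpotent (resp. that $H$ carries enough primes) one extracts a section $R\rtimes S$ with $R$ an $r$-group, $S$ a $q$-group, $q\ne r$, $[R,S]=R$, positioned above $F(H)$ so that the elements of $R$ act fixed-point-freely on a well-chosen irreducible constituent of $N$ while some element of $S$ does not centralize $R$; the commutators $[r,s]$, $r\in R$, are then nontrivial $r$-elements which the CPPO condition and the trace computation force to act fixed-point-freely there as well, and the chosen section can be arranged to contradict this. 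Thompson's theorem that a Frobenius complement is nilpotent, together with the classification of soluble fixed-point-free linear groups, is invoked to rule out the degenerate alternative in which every relevant commutator is trivial or fixed-point-free — precisely the configuration, realized by $N\rtimes(\text{Frobenius complement})$, in which $N\rtimes H$ does stay CPPO, and which $h(H)\ge3$ must therefore exclude. For (b), the same trace computation, now applied inside $G'$ and combined with part (a) (so $G'$ has Fitting height at most $3$), is used to show that $F(G')$ is a prime-power group and that the remaining two Fitting layers of $G'$ together involve at most two further primes, whence $|\pi(G')|\le3$.

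The step I expect to be the genuine obstacle is exactly this manufacturing of a bad commutator. In the EPPO setting any element of order divisible by two primes already yields a contradiction, so one merely points at a mixed element of a tall section; here the offending element must be realized as an honest commutator $[x,y]$, so one has to trace, through the Fitting series and the module decompositions, a pair $(x,y)$ whose commutator is simultaneously a nontrivial prime-power element for a prime other than $p$ and non-fixed-point-free on $N$, all while handling the case in which that critical prime coincides with the defining characteristic of $N$. Forcing both conditions at once is delicate, and I would expect to first put the minimal counterexample into a rigid normal form of ``critical group'' type (in the spirit of the Fitting-height reductions of Dade and Turull) before committing to the final choice of $x$ and $y$.
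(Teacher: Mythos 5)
Your reduction to $G=N\rtimes H$ with $N$ a faithful irreducible module and your ``trace computation'' are sound, and they coincide with the engine of the paper's argument: the identity $[nh_1,h_2]=[n,h_2]^{h_1}[h_1,h_2]$ combined with the coprime decomposition $N=C_N(w)\oplus[N,w]$ is exactly the mechanism of Lemmas \ref{orderofav} and \ref{aaa}, yielding the forced containment $[N,h_2]\leq[N,[h_1,h_2]]$. But your proof stops where the real work begins, and you say so yourself. The decisive step --- producing a pair $(h_1,h_2)$ whose commutator is a nontrivial $p'$-element that is not fixed-point-free on $N$ --- is only gestured at via ``Clifford theory'', Thompson's theorem and a hypothetical ``critical group normal form''; none of it is executed, and the degenerate configurations you propose to ``rule out'' are precisely the hard cases. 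Concretely: when the acting layer is abelian noncyclic, the containment above together with $\bigcap_{1\neq a\in A}[V,a]=1$ (Lemma \ref{acnoncop}) kills the configuration at once; but when that layer is cyclic or quaternion, fixed-point-free action is genuinely possible and no contradiction arises at that level. The paper spends Lemmas \ref{torrecasoextra}, \ref{abelianorextra}, \ref{abelianoextra2} and \ref{autodoquaternion}, plus the three-case analysis of Proposition \ref{a1} (the second tower layer cyclic, abelian noncyclic, or extraspecial --- the last forced to be $Q_8$), exactly to navigate these cases, ending with explicit computations in $Q_8$ and in subgroups of $S_4$. Your sketch contains no substitute for this analysis, so part (a) is not proved.

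Part (b) is in worse shape. You assert that the same computation shows $F(G')$ has prime-power order and that the remaining Fitting layers ``involve at most two further primes'', but you give no mechanism for the first claim and the second does not follow from (a) alone. The paper's route is different and elementary: for a nilpotent normal subgroup $N$ of a CPPO-group with a non-central Sylow $p$-subgroup $P$, one has $G=C_G(P)\cup C_G(O_{p'}(N))$ (otherwise $[ab,g]=[a,g][b,g]$ has composite order), and since no group is the union of two proper subgroups, $O_{p'}(N)\leq Z(G)$ (Lemma \ref{opelinha}); applying this to $\gamma_\infty(G)$ and iterating through the at most three Fitting layers gives $|\pi(G')|\leq 3$. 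That ``union of two centralizers'' idea, or some equivalent, is missing from your proposal, so (b) is also unproved as written.
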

As usual, we say that a group $K$ is perfect if $K=K'$. We write $R(K)$ to denote the maximal soluble normal subgroup of $K$.
\begin{theorem}\label{main2}
	Let $G$ be an insoluble CPPO-group. Then
	\begin{enumerate}
		\item[(a)] $G'$ is perfect;
		\item[(b)] $R(G')=[G',R(G)]$ is a $2$-group; and 
\item[(c)]		$G'/R(G')$ is a simple EPPO-group.	\end{enumerate}
\end{theorem}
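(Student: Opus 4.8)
The plan is to pass to the almost simple quotient $\bar G=G/R(G)$, pin down its structure, and then transfer the information back to $G'$.

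\smallskip
\emph{Reduction to an almost simple quotient.} Since quotients of CPPO-groups are CPPO-groups, $\bar G=G/R(G)$ is an insoluble CPPO-group with $R(\bar G)=1$, so $F^{\ast}(\bar G)=\mathrm{Soc}(\bar G)=S_1\times\cdots\times S_k$ with the $S_i$ nonabelian simple. Each $S_i$ is subnormal in $\bar G$, so a commutator of $S_i$ is a commutator of $\bar G$; thus $S_i$ is a CPPO-group and, by the Ore conjecture, an EPPO-group, hence one of Suzuki's eight simple groups. If $k\geqslant 2$, pick distinct primes $p\mid|S_1|$, $q\mid|S_2|$ and elements of those orders in $S_1$, $S_2$; writing them as commutators of the factors exhibits an element of order $pq$ as a commutator of $\bar G$, a contradiction. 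Hence $k=1$: $\mathrm{Soc}(\bar G)=S$ is a single simple EPPO-group, $C_{\bar G}(S)=1$, and $S\leqslant\bar G\leqslant\mathrm{Aut}(S)$.

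\smallskip
\emph{The derived subgroup of $\bar G$.} I would show $\bar G'=S$. As $\mathrm{Out}(S)$ is soluble, an almost simple group with socle $S$ is perfect only if it equals $S$, so $\bar G'=S$ is equivalent to $\bar G/S$ being abelian; this is automatic for seven of Suzuki's groups, for which $\mathrm{Out}(S)$ is cyclic or $C_2\times C_2$. The remaining case $S=PSL(3,4)$, where $\mathrm{Out}(S)\cong C_2\times S_3$ is nonabelian, must be handled separately: one shows that a group $H$ with $PSL(3,4)\leqslant H\leqslant\mathrm{Aut}(PSL(3,4))$ and $H/PSL(3,4)$ nonabelian is not a CPPO-group, since such an $H'$ contains $PGL(3,4)$, in which one exhibits a commutator of non-prime-power order (for instance the image of a Singer cycle of $GL(3,4)$, of order $21$). \textbf{This case analysis is the first place where real work is needed.}

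\smallskip
\emph{The soluble radical of $G'$.} Set $N=G'\cap R(G)$. As $R(G')$ is soluble and characteristic in $G'\trianglelefteq G$ it lies in $R(G)$, and $N$ is soluble and normal in $G'$, so $N=R(G')$; moreover $G'/N\cong G'R(G)/R(G)=\bar G'=S$, which is already part (c). The key remaining point is that $N$ is a $2$-group, proved by adapting Suzuki's argument that the soluble radical of an insoluble EPPO-group is a $2$-group. If an odd prime $q$ divided $|N|$, then after replacing $G$ by a suitable quotient one may assume $N$ contains a $G$-chief factor $V$ that is an elementary abelian $q$-group on which $S$ acts; taking an involution $s$ of a lift of $S$, the action of $s$ on $V$ is semisimple and is not $-\mathrm{id}$ (else $S$ would be abelian), so $C_V(s)\neq 0$, and for $0\neq v\in C_V(s)$ the element $vs$ has order $2q$. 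Using that the image of $s$ in $S$ is a commutator (Ore), one realizes an element of order $2q$ as a commutator of $G$, a contradiction; no such argument applies when $q=2$, which is why the statement concerns $2$-groups. \textbf{I expect this adaptation of Suzuki's argument, and in particular the step realizing the bad element as an honest commutator, to be the main technical obstacle.}

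\smallskip
\emph{Assembling the proof.} Since $\bar G'=S$ is perfect, the image of $G''$ in $\bar G$ is $(\bar G')'=S=\bar G'$, so $G''R(G)=G'R(G)$, and the modular law gives $G'=G''(G'\cap R(G))=G''N$. To finish (a) it suffices to prove $N\leqslant G''$, equivalently $N=[N,G']$. The quotient $N/[N,G']$ is centralized by $G'$, so $G'/[N,G']$ is the commutator subgroup of the CPPO-group $G/[N,G']$ and is a central extension of $S$ by the $2$-group $N/[N,G']$; arguing as in the exclusions above --- a commutator of $G/[N,G']$ reaching the central $2$-part would have non-prime-power order, and for $PSL(2,8)$ and $Sz(32)$ the Schur multiplier of $S$ is already trivial --- one gets $N/[N,G']=1$. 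Hence $N=[N,G']\leqslant[G',G']=G''$, so $G'=G''N=G''$ is perfect, which is (a); and then $N=[N,G']\leqslant[G',R(G)]\leqslant G'\cap R(G)=N$ yields $N=[G',R(G)]$, completing (b). Part (c) was established above.
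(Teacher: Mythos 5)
Your overall skeleton (pass to the almost simple quotient $G/R(G)$, use Ore plus Suzuki's classification, treat $PSL(3,4)$ separately, then pull information back to $G'$) matches the paper's, but three of the steps you flag or assert do not close, and one is outright false. First, the $PSL(3,4)$ case: your proposed witness cannot exist. Every commutator of $PGL(3,4)$ lies in $PGL(3,4)'=PSL(3,4)$, which is an EPPO-group, so $PGL(3,4)$ \emph{is} a CPPO-group and in particular contains no commutator of order $21$; the image of a Singer cycle is not a commutator precisely because it lies outside $PSL(3,4)$. The obstruction to being CPPO when $H/PSL(3,4)$ is nonabelian does not live in the diagonal extension at all: in Lemma \ref{psl34case} the paper exhibits commutators of order $6$ of the form $[\overline{A}\,\overline{\delta},\overline{\varphi}]$ and $[\overline{A}\,\overline{\delta},\overline{\beta}]$, i.e.\ it is essential to commutate against the field or graph automorphism so that the commutator lands in the coset $H\overline{\delta}$.

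Second, the step you yourself identify as the main obstacle --- realizing $vs$ (of order $2q$) as an honest commutator of $G$ --- is not an ``adaptation'' of Suzuki but the entire difficulty, and Ore does not supply it: knowing $\bar{s}$ is a commutator in the simple quotient $S$ gives no control over which element of the coset $Vs$ is hit by a commutator of $G$ (it could be $s$ itself, or $v's$ with $v'\notin C_V(s)$). The paper avoids lifting commutators from the quotient altogether: in Proposition \ref{thmnonsolubleCPPO} it uses the Frobenius normal $q$-complement theorem (Lemma \ref{existelemabelqsub}) to produce a noncyclic elementary abelian $q$-group $Q=[Q,a]$, builds the tower $(\langle a\rangle,Q,M)$, and derives the contradiction from Lemma \ref{aaa} together with the Fitting height bound of Theorem \ref{main1} --- so part (b) genuinely depends on the soluble theory, which your sketch never invokes. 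Third, your derivation of (a): after you reach $G'/[N,G']\cong S\times\bigl(N/[N,G']\bigr)$ with the second factor a central $2$-group, the claim that ``a commutator reaching the central $2$-part would have non-prime-power order'' does not force $N/[N,G']=1$, because $S\times C_2$ is itself a CPPO-group (all its commutators lie in $S$); nothing in your argument produces a commutator that actually reaches the central part. The paper instead proves perfectness of $G'$ first and independently (Proposition \ref{derivsubofnonsol}), by a minimal-counterexample argument showing $R(G)=Z(G)$ and then eliminating the Klein four-group outer cases via the exceptional automorphism of $A_6$ (Lemma \ref{a6case}), the splitting of $\mathrm{Aut}\,PSL(3,4)$ over $PSL(3,4)$, and Baer--Suzuki; only then does the quasisimple analysis (Lemmas \ref{LiObShTi2} and \ref{quasisimpleissimple}) yield $R(G')=[G',R(G)]$ as in Corollary \ref{radglinhaasaderiv}.
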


Thus, our results show that the structure of the commutator subgroup of a CPPO-group is roughly similar to that of an EPPO-group.




\section{Preliminaries}

If $A$ is a group of automorphisms of a group $G$, the subgroup generated by all elements of the form $g^{-1}g^\alpha$ with $g\in G$ and $\alpha\in A$ is denoted by $[G,A]$. It is well known that the subgroup $[G,A]$ is an $A$-invariant normal subgroup in $G$. We write $C_G(A)$ for the centralizer  of $A$ in $G$. If $G$ and $A$ are finite and $(|G|,|A|)=1$, we say that $A$ is a group of coprime automorphisms of $G$. Throughout, $\pi(G)$ denotes the set of prime divisors of the order of $G$.

We start with a lemma which lists some well known properties of coprime actions (see for example \cite[Ch.~5 and 6]{gorenstein2007finite}).  In the sequel the lemma will often be used without explicit references.

\begin{lemma}\label{cc}
Let  $A$ be a group of coprime automorphisms of a finite group $G$. Then
\begin{enumerate}
\item[(i)] $G=[G,A]C_{G}(A)$. If $G$ is abelian, then $G=[G,A]\oplus C_{G}(A)$.
\item[(ii)] $[G,A,A]=[G,A]$. 
\item[(iii)] $C_{G/N}(A)=NC_G(A)/N$ for any $A$-invariant normal subgroup $N$ of $G$.
\item[(iv)] If $[G/\Phi(G),A]=1$, then $[G,A]=1$.
\item[(v)] If $G$ is nilpotent and $A$ is a noncyclic abelian group, then $G=\prod_{1\neq a\in A}C_{G}(a)$.
\item[(vi)] $G$ contains an $A$-invariant Sylow $p$-subgroup for each prime $p\in\pi(G)$.
\end{enumerate}
\end{lemma}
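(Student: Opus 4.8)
The plan is to build everything around part~(iii), the only substantial statement; the rest are short formal consequences of~(iii) together with the abelian case of~(i). I would first settle the abelian case of~(i): writing an abelian $G$ additively as a module over $\Z[A]$ in which $|A|$ is invertible (by coprimality), the norm map $\nu\colon g\mapsto\sum_{\alpha\in A}g^{\alpha}$ is $A$-equivariant, is multiplication by $|A|$ on $C_G(A)$ (so $\Imm\nu=C_G(A)$), and is multiplication by $|A|$, hence injective, on $G/[G,A]$ where $A$ acts trivially (so $\ker\nu=[G,A]$); since $\ker\nu\cap\Imm\nu=0$ and $|\ker\nu|\,|\Imm\nu|=|G|$, this gives $G=[G,A]\oplus C_G(A)$.

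For part~(iii) the inclusion $NC_G(A)/N\le C_{G/N}(A)$ is immediate, so the point is that an $A$-invariant coset $gN$ contains an $A$-fixed element. When $N$ is abelian this is a short averaging argument built on the previous paragraph. In general I would invoke Glauberman's lemma (as in \cite{gorenstein2007finite}), applied to $N$ acting by right translation on $gN$ with $A$ acting compatibly: since $N$ is transitive and $(|N|,|A|)=1$, $A$ has a fixed point. This is the step I expect to be the main obstacle: for nonsolvable $N$ the reduction to abelian sections is unavailable, and one genuinely needs Glauberman's lemma, whose proof rests on the conjugacy part of Schur--Zassenhaus and hence, ultimately, on the Feit--Thompson theorem.

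Granting~(iii), the remaining items are soft. Taking $N=[G,A]$ and noting that $A$ acts trivially on $G/[G,A]$ by definition, (iii) reads $G=[G,A]C_G(A)$, which is the first assertion of~(i). For~(ii): from $G=[G,A]C_G(A)$ and $[xy,\alpha]=[x,\alpha]^{y}[y,\alpha]$, any generator $[xy,\alpha]$ of $[G,A]$ with $x\in[G,A]$, $y\in C_G(A)$ equals $[x,\alpha]^{y}\in[G,A,A]$ (using that $[G,A,A]\trianglelefteq G$), so $[G,A]=[G,A,A]$. For~(iv): the hypothesis gives $[G,A]\le\Phi(G)$, so $G=[G,A]C_G(A)$ forces $G=C_G(A)\Phi(G)$ and hence $G=C_G(A)$ by the non-generator property of $\Phi(G)$.

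For~(v), since a nilpotent group is the direct product of its ($A$-invariant) Sylow subgroups one may assume $G$ is a $p$-group; by~(iii) (applied with $\langle a\rangle$ in place of $A$) the image of $C_G(a)$ in $V:=G/\Phi(G)$ is $C_V(a)$, so by the Frattini argument it suffices to prove $V=\sum_{1\ne a\in A}C_V(a)$. Now $V$ is an $\F_p[A]$-module with $p\nmid|A|$, so over $\overline{\F}_p$ it splits into one-dimensional eigenspaces $V_{\chi}$ for characters $\chi\colon A\to\overline{\F}_p^{\times}$; each $\chi$ has nontrivial kernel because $\Imm\chi$ is cyclic while $A$ is not, so $V_{\chi}\le C_V(a)$ for some $1\ne a\in A$, and summing (then descending to $\F_p$ by a dimension count) gives the claim. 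Finally~(vi) follows from the usual Frattini argument together with Schur--Zassenhaus inside $G\rtimes A$, which yields a conjugate of $A$ normalising a Sylow $p$-subgroup of $G$; this, together with all of the above, is why in the text we simply cite \cite[Ch.~5 and 6]{gorenstein2007finite}.
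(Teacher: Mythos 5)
The paper offers no proof of this lemma at all: it is stated as a compendium of standard facts with a pointer to \cite[Ch.~5 and 6]{gorenstein2007finite}, so the only thing to compare your proposal against is the textbook treatment, and that is essentially what you have reconstructed --- the norm-map splitting for abelian $G$, Glauberman's lemma (resting on the conjugacy half of Schur--Zassenhaus and, for full generality, the odd-order theorem) for (iii), and the formal deductions of (i), (ii), (iv) and (vi). One small point in (ii): you use that $[G,A,A]\trianglelefteq G$, which is true but deserves a line --- $[G,A,A]$ is normalized by $[G,A]$ (since $[H,K]\trianglelefteq\langle H,K\rangle$ always) and by $C_G(A)$ (since $[x,\alpha]^y=[x^y,\alpha]$ for $y\in C_G(A)$), and these two subgroups generate $G$ by (i).

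The one place where your argument does not quite deliver the statement as written is (v). The conclusion is the setwise product $G=\prod_{1\neq a\in A}C_{G}(a)$, but the Frattini/non-generator step only upgrades $G/\Phi(G)=\sum_a C_{G/\Phi(G)}(a)$ to $G=\langle C_G(a)\mid 1\neq a\in A\rangle$; for subgroups of a nilpotent group, generation does not in general imply that their product in a fixed order exhausts the group. The standard repair is to induct along the upper central series instead of modulo $\Phi(G)$: for a nonabelian nilpotent $G$ one has $Z(G)\neq1$, induction together with (iii) gives $G=\bigl(\prod_a C_G(a)\bigr)Z(G)$, the abelian case gives $Z(G)=\prod_a C_{Z(G)}(a)$, and each central factor $C_{Z(G)}(a)$ can be slid over and absorbed into the corresponding $C_G(a)$. (For the only use of (v) in this paper, namely Lemma \ref{fundresultkurzweil}, the generation version already suffices, since one only needs $G=1$ when every $C_G(a)$ is trivial.) Everything else in your proposal is correct.
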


The next lemma is immediate from the previous one.
\begin{lemma}\label{fundresultkurzweil}
	Let $A$ be a group acting coprimely on a finite abelian group $V\neq1$. Suppose that  
	$C_V(a)=1,~\text{for every}~1\neq a\in A$.
	Then $A$ is cyclic if at least one of the following conditions holds:
	\begin{enumerate}
		\item $A$ is abelian;
		\item $A$ is a $p$-group for an odd prime $p$;
		\item $A$ is a $2$-group but not a quaternion group. 
	\end{enumerate}
\end{lemma}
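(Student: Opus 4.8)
The plan is to reduce all three cases to Lemma~\ref{cc}(v). Suppose first that $A$ is abelian but not cyclic. Since $V$ is abelian it is in particular nilpotent, so Lemma~\ref{cc}(v) applies to the coprime action of the noncyclic abelian group $A$ on $V$ and gives
\[
V=\prod_{1\neq a\in A}C_V(a).
\]
By hypothesis every factor on the right-hand side is trivial, hence $V=1$, contradicting $V\neq1$. Thus (1) holds.

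For (2) and (3) I would first observe that the hypotheses pass to subgroups: if $B\leqslant A$, then $B$ acts coprimely on $V$ and $C_V(b)=1$ for every $1\neq b\in B$, since $B\setminus\{1\}\subseteq A\setminus\{1\}$. In particular, by the case just settled, every abelian subgroup of $A$ is cyclic. Now I invoke the classical structure theory of finite $p$-groups (see, e.g., \cite[Ch.~5]{gorenstein2007finite}): a noncyclic $p$-group that is not a generalized quaternion group contains a subgroup isomorphic to $C_p\times C_p$. Such a subgroup would be noncyclic and abelian, contradicting the previous observation. Consequently, in case (2) the group $A$ is cyclic (a generalized quaternion group is a $2$-group, so this exception cannot occur for odd $p$), and in case (3) the group $A$ is cyclic unless it is a generalized quaternion group, which is excluded by hypothesis; so $A$ is cyclic in both cases.

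The argument is short, and essentially all the work is done by Lemma~\ref{cc}(v), so I do not expect a genuine obstacle; the only auxiliary input is the elementary fact recalled above about $p$-groups having no $C_p\times C_p$ subgroup. The one point that deserves care is the interpretation of ``quaternion group'' in~(3): it has to be read as ``generalized quaternion group'' $Q_{2^n}$, $n\geqslant3$, and not merely $Q_8$, since every generalized quaternion group is a Frobenius complement and hence does admit a fixed-point-free coprime action on a suitable finite abelian group; so all of them genuinely must be excluded from the conclusion.
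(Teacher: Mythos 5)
Your proof is correct and follows exactly the route the paper intends: the paper gives no proof, declaring the lemma ``immediate from the previous one,'' i.e.\ from Lemma~\ref{cc}(v), which is precisely your case (1), and your reduction of (2) and (3) to (1) via the standard fact that a noncyclic $p$-group other than a generalized quaternion group contains $C_p\times C_p$ is the natural way to fill in the remaining details. Your closing remark that ``quaternion'' must be read as ``generalized quaternion'' is also the right reading of the statement.
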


\begin{lemma}\label{acnoncop}
	Let $A$ be a noncyclic abelian group acting coprimely on an abelian group $V$. Then 
	$$\bigcap_{1\neq a\in A}[V,a]=1.$$
\end{lemma}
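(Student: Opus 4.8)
The plan is to exhibit $A$ as a fixed-point-free group of automorphisms of the subgroup $W:=\bigcap_{1\neq a\in A}[V,a]$, and then to conclude via the fact---available from Lemma~\ref{cc}(v)---that a noncyclic abelian group admits no such action on a nontrivial (nilpotent) group.

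The first step is to verify that $W$ is $A$-invariant, so that the coprime-action machinery applies to it; this is where the hypothesis that $A$ is abelian enters. For $a,b\in A$ the automorphisms $a$ and $b$ commute, so conjugating a generator $v^{-1}v^{a}$ of $[V,a]$ by $b$ gives $(v^{b})^{-1}(v^{b})^{a}$, which is again a generator of $[V,a]$; hence each $[V,a]$ is $A$-invariant, and so is the intersection $W$. As a subgroup of the abelian group $V$, the group $W$ is abelian, and $A$ acts coprimely on it. The second and decisive step is the observation that for every $1\neq a\in A$ one has $C_{W}(a)=1$. Indeed, Lemma~\ref{cc}(i) provides the internal direct decomposition $V=[V,a]\oplus C_{V}(a)$; since $W\leqslant[V,a]$ by the very definition of $W$, intersecting with $W$ yields
$$C_{W}(a)=W\cap C_{V}(a)\leqslant[V,a]\cap C_{V}(a)=1 .$$

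It then remains only to invoke Lemma~\ref{cc}(v) with the nilpotent group $W$ and the noncyclic abelian group $A$: it gives $W=\prod_{1\neq a\in A}C_{W}(a)$, and since every factor on the right is trivial we obtain $W=1$, as claimed. (Alternatively, if $W\neq1$ then Lemma~\ref{fundresultkurzweil}(1) applied to $W$ would force $A$ to be cyclic, a contradiction.) The argument is short because it reduces directly to the preliminary lemmas; the only step requiring genuine care is confirming the $A$-invariance of the intersection $W$, which is precisely what makes those lemmas applicable to $W$ and which relies on $A$ being abelian rather than merely, say, nilpotent.
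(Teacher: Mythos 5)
Your proof is correct. Every step checks out: each $[V,a]$ is indeed $A$-invariant because $A$ is abelian (so $(v^{-1}v^{a})^{b}=(v^{b})^{-1}(v^{b})^{a}$), hence $W=\bigcap_{1\neq a\in A}[V,a]$ is an $A$-invariant subgroup of $V$ on which $A$ acts coprimely; and $C_W(a)\leqslant [V,a]\cap C_V(a)=1$ follows from the coprime decomposition $V=[V,a]\oplus C_V(a)$ of Lemma~\ref{cc}(i). Either of your two closing moves (Lemma~\ref{cc}(v) or Lemma~\ref{fundresultkurzweil}(1)) then finishes the argument, since $C_W(a)=1$ for all $1\neq a$ forces $W=1$ when $A$ is noncyclic abelian.

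This is, however, a genuinely different route from the paper's. The paper argues by induction on $|V|$: it uses Lemma~\ref{fundresultkurzweil} only to produce a single element $a$ with $C_V(a)\neq 1$, passes to the quotient $V/C_V(a)$, and concludes from $\bigcap_{1\neq b\in A}[V,b]\leqslant [V,a]\cap C_V(a)=1$. Your argument dispenses with the induction entirely by observing that the intersection $W$ itself is an $A$-invariant subgroup on which $A$ acts without nontrivial fixed points, and then applying the fixed-point-free criterion directly to $W$ rather than to $V$. The two proofs consume the same preliminary lemmas, but yours is more direct and isolates the structural reason the lemma holds (a noncyclic abelian group has no coprime fixed-point-free action on a nontrivial abelian group), at the cost of having to verify the $A$-invariance of $W$ explicitly --- a point the paper's inductive setup sidesteps because it only ever needs the $A$-invariance of $C_V(a)$.
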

\begin{proof}
	We argue by induction on $|V|$. The result is clear if $|V|=1$. Assume $|V|>1$. As $A$ is a noncyclic group, by Lemma \ref{fundresultkurzweil}  there exists an element $1\neq a\in A$ for which $C_V(a)\neq1$. Since $A$ is abelian, $C_V(a)$ is $A$-invariant. Applying the induction hypothesis to the quotient group $V/C_V(a)$ we obtain
	$$\bigcap_{1\neq b\in A}\big[V/C_V(a),b\big]=1.$$
	This means that $C_V(a)$ contains the subgroup 
	$$\bigcap_{1\neq b\in A}[V,b],$$
	which clearly is also contained in $[V,a]$. Since the action is coprime,
	$$[V,a]\cap C_V(a)=1,$$ 
	whence 
	$$\bigcap_{1\neq b\in A}[V,b]=1,$$
	as claimed. 
\end{proof}

Throughout this article we write $G=\langle X\rangle$ to mean that the group $G$ is generated by a set $X$.

\begin{lemma}\label{orderofav}
	Let $G=V\langle a\rangle$ be a finite group, which is a product of a normal subgroup $V$ and a cyclic subgroup $\langle a\rangle$ such that $(|V|,|a|)=1$. Suppose that $v\in V$ has the property that $(|V|,|av|)=1$. Then $v\in [V,a]$. 
\end{lemma}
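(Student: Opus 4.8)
The plan is to pass to the quotient $G/[V,a]$ and argue by coprimality there. First I would record that $[V,a]=[V,\langle a\rangle]$ is a normal subgroup of $G$: it is $\langle a\rangle$-invariant and normal in $V$, hence normalized by $G=V\langle a\rangle$. For $x\in G$ write $\bar x$ for its image in $\bar G:=G/[V,a]$. Then $[\bar V,\bar a]=\overline{[V,a]}=1$, so $\bar a$ centralizes $\bar V$; in particular $\bar a$ commutes with $\bar v$.

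Next I would examine the order of $\overline{av}=\bar a\,\bar v$. Since $\bar a$ and $\bar v$ commute and have coprime orders --- the order of $\bar a$ divides $|a|$, the order of $\bar v$ divides $|\bar V|$, and $|\bar V|$ divides $|V|$, which is coprime to $|a|$ --- we get $|\overline{av}|=|\bar a|\cdot|\bar v|$, so that $|\bar v|$ divides $|\overline{av}|$. On the other hand $|\overline{av}|$ divides $|av|$, which by hypothesis is coprime to $|V|$ and hence to $|\bar V|$.

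Combining these two facts: $|\bar v|$ divides $|\overline{av}|$, which is coprime to $|\bar V|$, while at the same time $|\bar v|$ divides $|\bar V|$. Therefore $|\bar v|=1$, i.e. $\bar v=1$, which is precisely the statement $v\in[V,a]$. I do not anticipate a real obstacle here: the one point deserving a moment's care is that $[V,a]$ is normal in $G$ (so that the quotient makes sense), after which everything reduces to a routine coprimality computation. Alternatively, one could deduce the lemma from the conjugacy part of the Schur--Zassenhaus theorem, noting that $\langle av\rangle$ is another complement to $V$ in $G$ (its order, being coprime to $|V|$ yet divisible by $|a|$, must equal $|a|$) and therefore $V$-conjugate to $\langle a\rangle$; but the quotient argument is shorter and sidesteps having to determine $|av|$ first.
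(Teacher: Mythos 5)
Your argument is correct and is essentially the paper's proof: both pass to the quotient $G/[V,a]$ and then conclude by a coprimality observation, which you simply spell out in more detail (the paper dismisses that final step as clear). No issues.
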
 
\begin{proof} We can pass to the quotient group $G/[V,a]$ and without loss of generality assume that $[V,a]=1$. But then it is clear that $(|V|,|av|)=1$ if and only if $v=1$. Hence the result.
\end{proof}

We remind the reader that a  $p$-group $P$ is called extraspecial if the centre $Z(P)$ has order $p$, and the quotient $P/Z(P)$ is a nontrivial elementary abelian $p$-group. 

\begin{lemma}\label{autoofextra}
	Let $\varphi$ be a coprime automorphism of a finite extraspecial $p$-group $P$ such that $C_P(\varphi)=\Phi(P)$. Then every element of $P\setminus \Phi(P)$ is conjugate to an element of the form $[x,\varphi]$ for some $x\in P$. 
\end{lemma}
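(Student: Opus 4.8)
The plan is to push everything down to the elementary abelian quotient $\bar P=P/\Phi(P)$ and then to lift back using the fact that in an extraspecial group the conjugacy class of a noncentral element is a full coset of the centre.

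First I would record the structural facts: since $P$ is extraspecial, $Z(P)=\Phi(P)=P'$ has order $p$. Because $\varphi$ acts coprimely and $C_P(\varphi)=\Phi(P)=Z(P)$, Lemma \ref{cc}(iii) yields $C_{\bar P}(\varphi)=Z(P)C_P(\varphi)/Z(P)=1$. Now $\bar P$ is an abelian group acted on by $\varphi$, so the map $\theta\colon\bar P\to\bar P$ given by $\bar x\mapsto[\bar x,\varphi]=\bar x^{-1}\bar x^{\varphi}$ is an endomorphism with kernel $C_{\bar P}(\varphi)=1$, hence a bijection. Since the quotient map $P\to\bar P$ intertwines $x\mapsto[x,\varphi]$ with $\theta$, this says precisely that for every $u\in P$ there is $x\in P$ with $[x,\varphi]\equiv u\pmod{Z(P)}$, that is, $[x,\varphi]=uz$ for some $z\in Z(P)$.

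Second, I would pin down the conjugacy class of a noncentral element. Fix $u\in P\setminus\Phi(P)=P\setminus Z(P)$. As $Z(P)=P'$, the commutator $[u,g]$ lies in $Z(P)$ for all $g\in P$ and is central, so $g\mapsto[u,g]$ is a homomorphism $P\to Z(P)$; its image is a subgroup of the cyclic group $Z(P)$ of order $p$, hence is either trivial or all of $Z(P)$, and it is trivial exactly when $u\in Z(P)$, which is excluded. Therefore $[u,P]=Z(P)$, and the conjugacy class of $u$ is $\{u[u,g]:g\in P\}=uZ(P)$.

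Finally I would combine the two steps: the element $[x,\varphi]=uz$ from the first step lies in $uZ(P)$, which by the second step is exactly the conjugacy class of $u$; hence $u$ is conjugate to $[x,\varphi]$, an element of the required form. I do not expect a genuine obstacle here; the one point that needs care, and which is really the crux of the argument, is that passing to $\bar P$ only determines $[x,\varphi]$ modulo $Z(P)$, and this ambiguity is absorbed precisely because a noncentral element of an extraspecial group is conjugate to everything in its $Z(P)$-coset. Both ingredients — triviality of $C_{\bar P}(\varphi)$ and the commutator computation — are routine consequences of coprime action and of the definition of an extraspecial group.
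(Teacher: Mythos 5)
Your proof is correct and follows essentially the same route as the paper's: surjectivity of the commutator map on $P/\Phi(P)$ (from $C_P(\varphi)=\Phi(P)$ and coprimality), followed by absorbing the central discrepancy $z\in Z(P)$ via conjugation. The only cosmetic difference is that the paper constructs the conjugating element explicitly as $y^r$ with $[x,\varphi,y]^r=z^{-1}$, whereas you invoke the equivalent fact that the conjugacy class of a noncentral element of an extraspecial group is its full $Z(P)$-coset.
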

\begin{proof} Since $C_P(\varphi)=\Phi(P)$, it follows that the map $\overline{g}\mapsto[\overline{g},\varphi]$ is surjective on the group $\overline{P}=P/\Phi(P)$. Thus, for an element $g\in P\setminus\Phi(P)$ we can find elements $x\in P\setminus\Phi(P)$ and $z\in\Phi(P)$ for which $g=[x,\varphi]z$. Let $y\in P$ be an element such that $[x,\varphi,y]\neq1$. Since $\Phi(P)=Z(P)$ has prime order, it is generated by $[x,\varphi,y]$. Therefore there exists an integer $r$ satisfying $[x,\varphi,y]^r=z^{-1}$. It follows that 
	$$g^{y^r}=([x,\varphi]z)^{y^r}=[x,\varphi]^{y^r}z=[x,\varphi][x,\varphi,y^r]z=[x,\varphi][x,\varphi,y]^rz=[x,\varphi].$$ This shows that $g$ is conjugate to $[x,\varphi]$, as required.
\end{proof}

 An important role in this paper is played by the concept of towers as introduced by Turull (see \cite{Turull}).

\begin{definition}\label{DefTurulltower}
	Let $G$ be a group. A sequence $(P_{i})_{i=1,\ldots,h}$ of subgroups of $G$ is said to be a tower of height $h$ if the following are satisfied:
	\begin{enumerate}
		\item $P_{i}$ is a $p_i$-group for all $i=1,\ldots,h$.
		\item $P_{i}$ normalizes $P_j$ for all $i<j$.
		\item Put $\overline{P_h}=P_{h}$ and $\overline{P_{i}}={P_{i}}/{C_{P_{i}}(\overline{P_{i+1}})}, \  i=1,\ldots,h-1.$ 
		Then $\overline{P_i}$ is nontrivial for all $i$.
		\item $p_i\neq p_{i+1}$ for all $i=1,\ldots,h-1$.
	\end{enumerate}
\end{definition}

In what follows, whenever we have a tower $(P_{i})_{i=1,\ldots,h}$, we write $p_i$ to denote the prime divisor of the order of $P_i$.

Recall that the Fitting height of a finite soluble group is the minimal number $h=h(G)$ such that $G$ possesses a normal series of length $h$ all of whose factors are nilpotent. The next lemma lemma is taken from \cite{Turull}.
\begin{lemma}\label{heightequaltower} If $G$ is a finite soluble group, then $h(G)$ is the maximum of heights of towers of $G$.
\end{lemma}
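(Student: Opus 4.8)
This is a theorem of Turull \cite{Turull}, which I would reconstruct by proving the two inequalities separately. Two standard facts are used throughout: for every finite soluble group $K$ one has $C_K(F(K))\le F(K)$, and $h(K_0)\le h(K)$ whenever $K_0\le K$ (an easy induction, since $K_0\cap F(K)$ is a nilpotent normal subgroup of $K_0$). Denote by $1=F_0\le F_1\le\cdots$ the Fitting series of $G$, so $F_i/F_{i-1}=F(G/F_{i-1})$.

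\emph{Every tower has height at most $h(G)$.} I would argue by induction on the height $k$ of the tower, the case $k\le1$ being trivial. Let $(P_1,\dots,P_k)$ be a tower of $G$. Since $P_1\cdots P_k$ is a subgroup (condition~(ii) of Definition~\ref{DefTurulltower}) and the Fitting height of a subgroup does not exceed that of the group, it suffices to treat $G=P_1\cdots P_k$. Then $P_k\trianglelefteq G$ by (ii), so $P_k\le F(G)$ because $P_k$ is a $p_k$-group. In $\overline G=G/F(G)$, which satisfies $h(\overline G)=h(G)-1$, the plan is to verify that the images $\overline{P_1},\dots,\overline{P_{k-1}}$ form a tower: conditions (i), (ii), (iv) descend at once, and for (iii) I would show by downward induction on the index that the reduced quotients stay nontrivial — otherwise the commutator witnessing triviality, lifted to $G$, would lie in $F(G)$, hence, being of order prime to $p_k$, would centralise $P_k$, contradicting (iii) for $(P_1,\dots,P_k)$. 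Induction then gives $k-1\le h(\overline G)=h(G)-1$.

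\emph{There is a tower of height $h(G)$.} By the previous part it is enough to produce a tower of height $\ge h:=h(G)$, so I would induct on $|G|$ with $h\ge2$. Since $h(G/F(G))=h-1$, induction supplies a tower $(\overline{Q_1},\dots,\overline{Q_{h-1}})$ of $G/F(G)$. I would lift it term by term — taking Sylow subgroups of preimages and adjusting inside suitable normalisers so that the normalising relations of (ii) and the nontriviality clauses (iii)--(iv) are preserved — to get a tower $(Q_1,\dots,Q_{h-1})$ of $G$ with $Q_{h-1}\not\le F(G)$. As $C_G(F(G))\le F(G)$, the subgroup $Q_{h-1}$ acts nontrivially on $F(G)=\prod_pO_p(G)$, hence nontrivially on $O_p(G)$ for some prime $p$; provided $p$ can be chosen distinct from $p_{h-1}$, the sequence $(Q_1,\dots,Q_{h-1},O_p(G))$ is a tower of height $h$ (note that $O_p(G)$, being normal in $G$, is $Q_i$-invariant for all $i$).

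\emph{Where the difficulty lies.} The first inequality is routine once the bookkeeping for condition (iii) is in place; the substance is in the second, and the obstacle there is exactly condition (iv). One must guarantee that $Q_{h-1}$ acts nontrivially \emph{across characteristic} on some Sylow subgroup of $F(G)$, replacing the tower handed down by induction with one having an admissible terminal prime if necessary. This is where solubility is indispensable: if no $O_p(G)$ admitted a nontrivial action by a $p'$-element of $G$, then the containment $C_G(F(G))\le F(G)$ together with the primewise-respected action of $G$ on $F(G)$ would force every Sylow subgroup of $G$ to be normal — i.e.\ $G$ nilpotent, contradicting $h\ge2$. A secondary point is the term-by-term lifting of towers through $G/F(G)$ preserving axioms (ii)--(iv). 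Granting all this, the two inequalities combine to give $h(G)=\max\{\text{heights of towers of }G\}$.
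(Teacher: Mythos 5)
The paper does not prove this lemma at all: it is quoted verbatim from Turull \cite{Turull} (his Lemma~1.9), so there is no in-paper argument to compare yours with. Judged on its own, your outline identifies the correct two inequalities and, commendably, the correct pressure points — but it does not close them, and the places you defer are precisely where the substance of Turull's proof lives. In the upper bound, your treatment of axiom~(iii) after passing to $G/F(G)$ only really addresses the index $i=k-1$ (``the commutator witnessing triviality \dots would centralise $P_k$''); for $i<k-1$ the reduced quotients $\overline{P_i}$ are defined recursively, and one must control how $C_{P_i}(\overline{P_{i+1}})$ changes under the quotient. The paper's own tool for this, Lemma~\ref{Casoloquotienttower}, requires $N\cap P_i\leq C_{P_i}(P_h)$, and with $N=F(G)$ this can fail when $p_i=p_k$ for some non-adjacent $i$ (e.g.\ $p_1=2$, $p_2=3$, $p_3=2$): then $P_i\cap F(G)$ and $P_k$ both sit in $O_{p_k}(G)$ and need not commute. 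So even the ``routine'' half needs an argument you have not supplied.

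The lower bound is where the real gap is, and your own closing paragraph concedes it. Two things are missing. First, the term-by-term lift of a tower of $G/F(G)$ to $G$ ``adjusting inside suitable normalisers'' is a genuine Frattini-type construction that must simultaneously preserve axioms (ii)--(iv); it is not automatic and you give no details. Second, and more seriously, after lifting you must append a bottom term $O_p(G)$ with $p\neq p_{h-1}$ on which $Q_{h-1}$ acts nontrivially. Your fallback argument (``if no $O_p(G)$ admitted a nontrivial action by a $p'$-element, $G$ would be nilpotent'') is correct as stated, but it only produces \emph{some} cross-characteristic action \emph{somewhere} in $G$; it does not show that the particular terminal group $Q_{h-1}$ handed to you by the induction acts nontrivially on some $O_p(G)$ with $p\neq p_{h-1}$, and a $p_{h-1}$-subgroup outside $F(G)$ can perfectly well centralise $O_{p_{h-1}'}(F(G))$ (a Sylow $2$-subgroup of $S_4\times C_3$ does exactly this). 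Repairing this requires either modifying the inherited tower or running the induction with a stronger hypothesis, which is what Turull actually does. As it stands, the proposal is an accurate map of the proof's difficulties rather than a proof; citing Turull, as the paper does, is the honest alternative.
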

The following lemma will be useful. It is taken from \cite[Lema 3.7]{Casolo}.
\begin{lemma}\label{Casoloquotienttower}
	Let $(P_i)_{i=1,\ldots,h}$ be a tower of a group $G$ with $h\geqslant2$. Let $N$ be anormal subgroup of $G$ such that
	$$N\cap P_i\leqslant C_{P_{i}}(P_h),~i=1,\ldots,h-1.$$ Then $(P_iN/N)_{i=1,\ldots,h-1}$ is a tower of $G/N$. 
\end{lemma}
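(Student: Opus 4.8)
The plan is to verify the four requirements of Definition~\ref{DefTurulltower} for the sequence $Q_i:=P_iN/N$, $i=1,\ldots,h-1$, inside $G/N$; this will exhibit it as a tower of height $h-1$. Requirements (i), (ii) and (iv) are immediate: $Q_i$ is a homomorphic image of the $p_i$-group $P_i$; since $P_i$ normalizes $P_j$ whenever $i<j$, its image $Q_i$ normalizes $Q_j$; and $p_i\neq p_{i+1}$ is inherited from the original tower. Hence everything hinges on requirement (iii). Write $\overline{P_h}=P_h$ and $\overline{P_i}=P_i/C_{P_i}(\overline{P_{i+1}})$ for the reduced terms of the original tower, as in Definition~\ref{DefTurulltower}, and similarly $\overline{Q_{h-1}}=Q_{h-1}$ and $\overline{Q_i}=Q_i/C_{Q_i}(\overline{Q_{i+1}})$ (for $i\leqslant h-2$) for the reduced terms of the new sequence; we must show $\overline{Q_i}\neq 1$ for all $i\in\{1,\ldots,h-1\}$.

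The key step is to construct, for each $i=1,\ldots,h-1$, an epimorphism $\overline{Q_i}\twoheadrightarrow\overline{P_i}$ that is equivariant for the conjugation action of $P_{i-1}$ (when $i\geqslant 2$). Once this is done, requirement (iii) for the original tower gives $\overline{P_i}\neq 1$, whence $\overline{Q_i}\neq 1$, and the proof is complete. I would build these epimorphisms by downward induction on $i$. For $i=h-1$ one has $\overline{Q_{h-1}}=Q_{h-1}=P_{h-1}N/N\cong P_{h-1}/(P_{h-1}\cap N)$ while $\overline{P_{h-1}}=P_{h-1}/C_{P_{h-1}}(P_h)$, and the hypothesis $N\cap P_{h-1}\leqslant C_{P_{h-1}}(P_h)$ guarantees that the canonical projection $P_{h-1}/(P_{h-1}\cap N)\twoheadrightarrow P_{h-1}/C_{P_{h-1}}(P_h)$ is well defined; it is visibly $P_{h-2}$-equivariant. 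For the inductive step, assume $\varphi\colon\overline{Q_{i+1}}\twoheadrightarrow\overline{P_{i+1}}$ is a given $P_i$-equivariant epimorphism. Let $\widetilde{K_i}\leqslant P_i$ be the kernel of the conjugation action of $P_i$ on $\overline{Q_{i+1}}$; since $P_i\cap N$ acts trivially on the section $\overline{Q_{i+1}}$ of $G/N$, one has $P_i\cap N\leqslant\widetilde{K_i}$ and therefore $\overline{Q_i}=Q_i/C_{Q_i}(\overline{Q_{i+1}})\cong P_i/\widetilde{K_i}$. Because $\varphi$ is a $P_i$-equivariant epimorphism, the action of $P_i$ on $\overline{P_{i+1}}$ factors through its action on $\overline{Q_{i+1}}$, so $\widetilde{K_i}\leqslant C_{P_i}(\overline{P_{i+1}})$; this yields the desired epimorphism $\overline{Q_i}\cong P_i/\widetilde{K_i}\twoheadrightarrow P_i/C_{P_i}(\overline{P_{i+1}})=\overline{P_i}$, and since both $\widetilde{K_i}$ and $C_{P_i}(\overline{P_{i+1}})$ are $P_{i-1}$-invariant it is $P_{i-1}$-equivariant, closing the induction.

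I expect the main obstacle to be purely bookkeeping rather than conceptual: one has to be careful that the centralizers defining the reduced terms are not destroyed on passing to $G/N$, which is exactly what the hypothesis $N\cap P_i\leqslant C_{P_i}(P_h)$ arranges at the top of the tower and what the equivariant epimorphisms transport downward. The subsidiary claims needed along the way---that the various kernels and centralizers ($\widetilde{K_i}$, $C_{P_i}(\overline{P_{i+1}})$, and the $C_{Q_j}(\overline{Q_{j+1}})$) are normalized by the lower terms $P_1,\ldots,P_{i-1}$, so that all the quotients and actions are well defined---are routine and follow by the same downward induction, using only that $N\trianglelefteq G$ and that $P_j$ normalizes $P_k$ whenever $j<k$.
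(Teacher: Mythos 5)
Your proof is correct. Note, however, that the paper gives no proof of this lemma at all---it is simply quoted from Casolo, Jabara and Spiga---so there is no in-paper argument to compare against; what you have written is a sound, self-contained verification. Conditions (i), (ii) and (iv) of Definition~\ref{DefTurulltower} are indeed immediate, and your downward induction producing $P_{i-1}$-equivariant epimorphisms $\overline{Q_i}\twoheadrightarrow\overline{P_i}$ correctly delivers condition (iii): the base case uses exactly the hypothesis $N\cap P_{h-1}\leqslant C_{P_{h-1}}(P_h)$ (necessary there because the top term $\overline{Q_{h-1}}=Q_{h-1}$ of the new tower is unreduced), and in the inductive step the containment $P_i\cap N\leqslant\widetilde{K_i}$ is automatic while $\widetilde{K_i}\leqslant C_{P_i}(\overline{P_{i+1}})$ follows from surjectivity and equivariance of $\varphi$. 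One observation worth recording: your argument never invokes the hypothesis $N\cap P_i\leqslant C_{P_i}(P_h)$ for $i<h-1$, so you have in fact proved the statement under a weaker assumption; this is consistent with Lemma~\ref{centralizadordeumtermo}, which shows $C_{P_i}(P_h)\leqslant C_{P_i}(\overline{P_{i+1}})$, the only form in which such a containment could be relevant. The normalization facts you defer as routine (that $P_{i-1}$ normalizes $\widetilde{K_i}$, $C_{P_i}(\overline{P_{i+1}})$ and $C_{Q_i}(\overline{Q_{i+1}})$, so that all quotients and actions are defined) do follow by the same downward induction, using only that $P_{i-1}$ normalizes every higher term of the tower and that $N$ is normal in $G$.
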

The next lemma is given without a proof because it is pretty similar to \cite[Lema 3.6]{Casolo}.
\begin{lemma}\label{centralizadordeumtermo}
Let $(P_i)_{i=1,\ldots,h}$ be a tower of a group $G$. Then $C_{P_i}(P_j)\leq C_{P_i}(\overline{P_{i+1}})$ for each $1\leqslant i<j\leqslant h$.
\end{lemma}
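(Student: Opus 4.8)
The plan is to prove the inequality by induction on $d=j-i\ge 1$, with the three subgroups lemma doing the essential work. Before starting the induction I would record the standing observation that $P_i$ normalizes each of $P_{i+1},P_{i+2},\dots,P_h$; consequently $P_i$ normalizes every subgroup of the form $C_{P_k}(P_\ell)$ with $i<k<\ell$ and, by downward induction on $k$, every subgroup $C_{P_k}(\overline{P_{k+1}})$, so that $P_i$ acts on all the quotients $\overline{P_k}$ and on $P_{i+1}/C_{P_{i+1}}(P_j)$ that will appear. This makes assertions of the shape ``$x$ centralizes a given quotient of $P_{i+1}$'' meaningful.

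The base case $d=1$ amounts to $j=i+1$. If $i+1=h$ then $\overline{P_{i+1}}=P_{i+1}$ and the inequality is an equality. If $i+1<h$ then $\overline{P_{i+1}}=P_{i+1}/C_{P_{i+1}}(\overline{P_{i+2}})$ is a $P_i$-invariant quotient of $P_{i+1}$, so any element of $P_i$ centralizing $P_{i+1}$ also centralizes $\overline{P_{i+1}}$; hence $C_{P_i}(P_{i+1})\le C_{P_i}(\overline{P_{i+1}})$.

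For the inductive step, assume $d\ge2$, so $i+1<j\le h$, and fix $x\in C_{P_i}(P_j)$; the goal is $x\in C_{P_i}(\overline{P_{i+1}})$. Since $P_i$ normalizes $P_{i+1}$, the subgroup $[\langle x\rangle,P_{i+1}]$ is contained in $P_{i+1}$, and the heart of the argument is to show that in fact $[\langle x\rangle,P_{i+1}]\le C_{P_{i+1}}(P_j)$, i.e.\ $[\langle x\rangle,P_{i+1},P_j]=1$. I would get this from the three subgroups lemma applied to $\langle x\rangle$, $P_{i+1}$, $P_j$: because $x$ centralizes $P_j$ we have $[P_j,\langle x\rangle]=1$, hence $[P_j,\langle x\rangle,P_{i+1}]=1$; and because $P_{i+1}$ normalizes $P_j$ we have $[P_{i+1},P_j]\le P_j$, hence $[P_{i+1},P_j,\langle x\rangle]\le[P_j,\langle x\rangle]=1$; the three subgroups lemma then gives $[\langle x\rangle,P_{i+1},P_j]=1$. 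Now $j-(i+1)=d-1$, so the inductive hypothesis applies to the pair $(i+1,j)$ and yields $C_{P_{i+1}}(P_j)\le C_{P_{i+1}}(\overline{P_{i+2}})$. Combining, $[\langle x\rangle,P_{i+1}]\le C_{P_{i+1}}(\overline{P_{i+2}})$, which says precisely that $x$ centralizes $P_{i+1}/C_{P_{i+1}}(\overline{P_{i+2}})=\overline{P_{i+1}}$. Thus $x\in C_{P_i}(\overline{P_{i+1}})$, and since $x$ was an arbitrary element of $C_{P_i}(P_j)$, we conclude $C_{P_i}(P_j)\le C_{P_i}(\overline{P_{i+1}})$.

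I do not anticipate a real difficulty; the subtleties are organizational. The key choice is to induct on $j-i$ rather than on $h$, so that a single application of the three subgroups lemma feeds directly into the inductive hypothesis; and one must keep careful track that every centralizer subgroup occurring is $P_i$-invariant, so that the bars $\overline{P_k}$ and the intermediate quotients genuinely carry a $P_i$-action. The small commutator identities needed (for instance $[\langle x\rangle,P_{i+1}]=\langle[x,p]:p\in P_{i+1}\rangle$, which holds because $x$ normalizes $P_{i+1}$, and $[P_j,\langle x\rangle]=1$ whenever $x$ centralizes $P_j$) are routine.
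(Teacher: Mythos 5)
Your proof is correct. The paper itself gives no argument for this lemma (it defers to Lemma 3.6 of Casolo--Jabara--Spiga), and your induction on $j-i$ via the three subgroups lemma --- reducing $x\in C_{P_i}(P_j)$ to $[\langle x\rangle,P_{i+1}]\leq C_{P_{i+1}}(P_j)\leq C_{P_{i+1}}(\overline{P_{i+2}})$ --- is exactly the standard argument that reference supplies, including the necessary check that the subgroups $C_{P_{k}}(\overline{P_{k+1}})$ are invariant under the earlier terms of the tower.
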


We will now record the almost obvious but important fact that only the first two factors of a tower can be cyclic.
\begin{lemma}\label{P3 e nao ciclico}
	Let $(P_{i})$ be a tower of height $h\geqslant3$ in a group $G$. Then for any $i\geqslant3$ the subgroup $P_{i}$ is not cyclic. 
\end{lemma}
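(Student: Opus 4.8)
The plan is to argue by contradiction. Suppose that $P_i$ is cyclic for some $i$ with $3\leqslant i\leqslant h$, and aim to show that the factor $\overline{P_{i-2}}$ — which condition (iii) of Definition~\ref{DefTurulltower} forces to be nontrivial — is in fact trivial. The idea driving the argument is that the automorphism group of a cyclic $p$-group is abelian, so any group of automorphisms of $P_i$ has abelian image in $\mathrm{Aut}(P_i)$, and hence its commutators must act trivially on $P_i$.

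Here are the steps I would carry out. First, form the subgroup $L=P_{i-2}P_{i-1}P_i$: this really is a subgroup, since $P_{i-1}$ normalizes $P_i$ and $P_{i-2}$ normalizes both $P_{i-1}$ and $P_i$, and for the same reason $P_i\trianglelefteq L$. Next, the conjugation map $L\to\mathrm{Aut}(P_i)$ has kernel $C_L(P_i)$, so $L/C_L(P_i)$ embeds into $\mathrm{Aut}(P_i)$, which is abelian because $P_i$ is cyclic; hence $L'\leqslant C_L(P_i)$, and in particular $[P_{i-2},P_{i-1}]\leqslant C_L(P_i)$. Since $P_{i-2}$ normalizes $P_{i-1}$ we also have $[P_{i-2},P_{i-1}]\leqslant P_{i-1}$, so $[P_{i-2},P_{i-1}]\leqslant P_{i-1}\cap C_L(P_i)=C_{P_{i-1}}(P_i)$. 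Because $\overline{P_i}$ is a homomorphic image of $P_i$ (namely $P_i/C_{P_i}(\overline{P_{i+1}})$, or $P_i$ itself when $i=h$), an element of $P_{i-1}$ centralizing $P_i$ also centralizes $\overline{P_i}$, so $[P_{i-2},P_{i-1}]\leqslant C_{P_{i-1}}(\overline{P_i})$. This says precisely that $P_{i-2}$ centralizes $\overline{P_{i-1}}=P_{i-1}/C_{P_{i-1}}(\overline{P_i})$, i.e. $\overline{P_{i-2}}=1$, the desired contradiction (note $1\leqslant i-2\leqslant h-2$, so $\overline{P_{i-2}}$ is genuinely one of the factors controlled by the tower axioms).

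I do not anticipate a serious obstacle here. The only point needing a moment's care is the final reduction from $C_{P_{i-1}}(P_i)$ to $C_{P_{i-1}}(\overline{P_i})$: one should check that the action of $P_{i-1}$ on the quotient $\overline{P_i}$ is well defined and that an element acting trivially on $P_i$ induces the trivial action on $\overline{P_i}$ — both are immediate, since an element centralizing $P_i$ fixes every subgroup of $P_i$ setwise. The only genuinely creative step is the choice to work inside the three consecutive terms $P_{i-2},P_{i-1},P_i$ and to exploit the abelianness of $\mathrm{Aut}(P_i)$.
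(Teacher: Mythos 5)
Your argument is correct and is essentially the paper's own proof: both exploit that $\mathrm{Aut}(P_i)$ is abelian for cyclic $P_i$ to get $[P_{i-2},P_{i-1}]\leq C_{P_{i-1}}(P_i)\leq C_{P_{i-1}}(\overline{P_i})$, forcing $\overline{P_{i-2}}=1$ and contradicting condition (iii) of the tower definition. The only difference is that you spell out the embedding of $L/C_L(P_i)$ into $\mathrm{Aut}(P_i)$ and the passage from $C_{P_{i-1}}(P_i)$ to $C_{P_{i-1}}(\overline{P_i})$, which the paper leaves implicit (cf.\ Lemma~\ref{centralizadordeumtermo}).
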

\begin{proof}
	Suppose that $P_i$ is a cyclic group for some $i\geqslant3$. Then any two automorphisms of $P_i$ commute. Consequently, $[P_{i-1}, P_{i-2}]\leq C_{P_{i-1}}(P_i)\leq C_{P_{i-1}}(\overline{P_i})$. It follows that $\overline{P_{i-2}}=1$, which is a contradiction. 
\end{proof}

We will require Turull's concept of an irreducible tower.
\begin{definition}\label{DefirredTurulltower}
	Let $G$ be a group and let $(P_i)$ be a tower of height $h$ in $G$. The tower $(P_i)$ is said to be irreducible if the following are satisfied:
	\begin{enumerate}
		\item $\Phi(\Phi(\overline{P_i}))=1$, $\Phi(\overline{P_i})\leq Z(\overline{P_i})$ and if $p_i\neq 2$ then $exp(\overline{P_i})=p_i$ for $i=1,\ldots,h$ and $P_{i-1}$ centralizes $\Phi(\overline{P_i})$ for all $i=2,\ldots,h$. 
		\item $P_1$ is cyclic and $\overline{P_1}$ has prime order.
		\item There exists an elementary abelian subgroup $H_{i}$ of $\overline{P_{i-1}}$ such that $[H_i,\overline{P_i}]=\overline{P_i}$ for $i=2,\ldots,h$.
		\item If $H$ is a $P_1\cdots P_{i-1}$-invariant subgroup of $P_i$ whose image on $\overline{P_i}$ is not contained in $\Phi(\overline{P_i})$, then $H=P_i$. 
	\end{enumerate}
\end{definition}

	Let $(P_{i}^{(1)})$ and $(P_{j}^{(2)})$ be towers of a group $G$ of heights $h_1$ and $h_2$, respectively. We will say that $(P_{i}^{(1)})$ is contained in $(P_{j}^{(2)})$ if there exists an increasing map $f:\{1,\ldots,h_1\}\longrightarrow\{1,\ldots,h_2\}$ such that $P_{i}^{(1)}\subseteq P_{f(i)}^{(2)}$ for every $i=1,\ldots,h_1$. 
	
Note that our definition of irreducible towers differs from that given by Turull in \cite{Turull} only by the item (ii). However, the difference is inessential. Indeed, if $(P_1,\ldots, P_h)$ is a tower of a group $G$, we may take an element $a\in P_1\setminus C_{P_1}(\overline{P_2})$ so that $(\langle a\rangle, P_2,\ldots,P_h)$ is also a tower of $G$. By \cite[Lemma 1.4]{Turull},  this last tower contains an irreducible one which now satisfies Definition \ref{DefirredTurulltower}.  Therefore we have

\begin{lemma}\label{containirredtower}
Let $G$ be a group and let $(P_i)_{i=1,\ldots,h}$ be a tower of $G$. Then $(P_i)$ contains an irreducible tower of same height. 
\end{lemma}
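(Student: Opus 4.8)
The plan is to deduce the statement from Turull's original reduction theorem \cite[Lemma 1.4]{Turull}, after first replacing the bottom term of the given tower by a cyclic subgroup that acts with prime order on the next factor. So let $(P_i)_{i=1,\dots,h}$ be a tower of $G$. Since $\overline{P_1}=P_1/C_{P_1}(\overline{P_2})$ is a nontrivial $p_1$-group, it contains an element of order $p_1$; choose $a\in P_1$ mapping onto such an element. Then $\langle a\rangle$ is a $p_1$-group that normalizes every $P_j$ with $j\geq 2$ (because $a\in P_1$), the alternation of primes is inherited, and since $C_{\langle a\rangle}(\overline{P_2})=\langle a\rangle\cap C_{P_1}(\overline{P_2})$ has index $p_1$ in $\langle a\rangle$, the first factor $\langle a\rangle/C_{\langle a\rangle}(\overline{P_2})$ is nontrivial, indeed of order exactly $p_1$. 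Hence $(\langle a\rangle,P_2,\dots,P_h)$ is a tower of $G$ of height $h$, and it is clearly contained in $(P_i)$ (via the identity map on indices, using $\langle a\rangle\leq P_1$).

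Next I would apply \cite[Lemma 1.4]{Turull} to $(\langle a\rangle,P_2,\dots,P_h)$: it contains an irreducible tower $(Q_i)_{i=1,\dots,h}$, in Turull's sense, of the same height. Comparing heights forces the associated increasing map to be the identity, so $Q_1\leq\langle a\rangle$ and $Q_i\leq P_i$ for $i\geq 2$; by transitivity of the containment relation, $(Q_i)$ is contained in $(P_i)$. Since our Definition \ref{DefirredTurulltower} differs from Turull's only in item~(ii), it now suffices to check that $Q_1$ is cyclic, which is clear since $Q_1\leq\langle a\rangle$, and that $\overline{Q_1}$ has prime order.

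For the last point I would first check, by downward induction on $j$, that $C_{P_j}(\overline{P_{j+1}})\cap Q_j\leq C_{Q_j}(\overline{Q_{j+1}})$ for all $j$; the base case $j=h-1$ is immediate from $Q_h\leq P_h$, and the inductive step follows from $[Q_{j+1},x]\leq[P_{j+1},x]\cap Q_{j+1}$ for $x\in Q_j$ together with the induction hypothesis. This inequality says precisely that acting trivially on $\overline{P_{j+1}}$ propagates, inside $Q_j$, to acting trivially on $\overline{Q_{j+1}}$. In particular $a^{p_1}$, which centralizes $\overline{P_2}$ by the choice of $a$, also centralizes $\overline{Q_2}$; hence every proper subgroup of $\langle a\rangle$ lies in $\langle a^{p_1}\rangle\leq C_{\langle a\rangle}(\overline{Q_2})$ and so has trivial image in $\overline{Q_1}$. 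Since $(Q_i)$ is a tower, this forces $Q_1=\langle a\rangle$, and then $\overline{Q_1}=\langle a\rangle/C_{\langle a\rangle}(\overline{Q_2})$ has order dividing $p_1$, hence equal to $p_1$. This establishes item~(ii), and with it the lemma. The only step that requires genuine care beyond invoking Turull's theorem is exactly this control of the bottom factor $\overline{Q_1}$, i.e.\ propagating "$a^{p_1}$ acts trivially" down through the reduced tower $(Q_i)$; everything else is either formal or already contained in \cite[Lemma 1.4]{Turull}.
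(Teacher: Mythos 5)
Your proposal follows essentially the same route as the paper, whose entire argument is the paragraph preceding the lemma: replace $P_1$ by $\langle a\rangle$ for a suitable $a\in P_1\setminus C_{P_1}(\overline{P_2})$ to get a new tower of the same height, and then invoke \cite[Lemma 1.4]{Turull} to extract an irreducible tower, which now satisfies item~(ii) of Definition~\ref{DefirredTurulltower}. The only difference is that you carefully verify the prime-order condition on $\overline{Q_1}$ (via the choice of $a$ with $a^{p_1}\in C_{P_1}(\overline{P_2})$ and the downward-induction propagation of centralizing), a point the paper asserts without detail; your verification is correct.
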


\section{Soluble CPPO-groups}

Observe that subgroups and quotient groups of a CPPO-group are again CPPO-groups. In the sequel, this will be used throughout the paper without being mentioned explicitly.

\begin{lemma}\label{aaa}
	Let $G$ be a finite group containing a tower $(P_1,P_2,P_3)$ of abelian subgroups such that $P_1$ is cyclic, $P_2$ is noncyclic and $P_2=[P_2,P_1]$. Then $G$ has a commutator whose order is not a prime power. 
\end{lemma}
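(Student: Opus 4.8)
The plan is to exhibit the required commutator already inside $H=P_1P_2P_3$. Write $P_1=\langle a\rangle$. By the tower axioms $P_1$ normalizes $P_2$ and $P_3$ and $P_2$ normalizes $P_3$, so $P_2P_3=P_2\ltimes P_3$ is normal in $H$ with $H/P_2P_3\cong P_1$ abelian; hence every commutator of $H$ lies in $P_2P_3$. Writing an element of $P_2P_3$ uniquely as $ut$ with $u\in P_2$, $t\in P_3$, its image in $P_2P_3/P_3\cong P_2$ is $u$, and since $\langle u\rangle$ acts coprimely on $P_3$ one checks that $(ut)^{|u|}=\prod_{i=0}^{|u|-1}t^{u^i}\in P_3$, where the map $t\mapsto\prod_{i=0}^{|u|-1}t^{u^i}$ has kernel exactly $[P_3,u]$ (it annihilates $[P_3,u]$ and restricts to the automorphism ``multiplication by $|u|$'' of $C_{P_3}(u)$). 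So $ut$ has order divisible by $p_2$ once $u\neq1$, and additionally by $p_3$ once $t\notin[P_3,u]$; it therefore suffices to produce a commutator of $H$ of the form $ut$ with $u\neq1$ and $t\notin[P_3,u]$.

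I first record two preliminary facts. Since $P_2$ is abelian and $P_2=[P_2,P_1]$, we have $[P_2,a]=[P_2,P_1]=P_2$, so $v\mapsto[v,a]$ is a surjective endomorphism of $P_2$; likewise $s\mapsto[s,a]$ surjects onto $[P_3,a]$. The crucial point is that $a$ does not centralize $P_3$: if it did, then for every $u\in P_2$ the elements $u$ and $aua^{-1}$ would induce the same automorphism of $P_3$, so that $(aua^{-1})u^{-1}\in C_{P_2}(P_3)$ for all $u\in P_2$; but the set of such elements is precisely $[P_2,a]=P_2$, which would give $\overline{P_2}=P_2/C_{P_2}(P_3)=1$, against the tower axioms. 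Hence $[P_3,a]\neq1$.

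Now $P_2$ is a noncyclic abelian group acting coprimely on the abelian group $P_3$, so Lemma~\ref{acnoncop} gives $\bigcap_{1\neq u\in P_2}[P_3,u]=1$. Pick $1\neq t_0\in[P_3,a]$; then there is $u_1\in P_2\setminus\{1\}$ with $t_0\notin[P_3,u_1]$. Write $t_0=[s_0,a]$ and $u_1=[v_0,a]$ with $s_0\in P_3$, $v_0\in P_2$, and expand the commutator $[s_0v_0,a]$: keeping track of which subgroup normalizes which, one obtains $[s_0v_0,a]=u_1\cdot t_0^{v_0u_1}$ with $t_0^{v_0u_1}\in P_3$. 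Finally $[P_3,u_1]$ is normalized by $u_1$, and also by $v_0$ because $P_2$ is abelian; hence $t_0^{v_0u_1}\in[P_3,u_1]$ would force $t_0\in[P_3,u_1]$, which is false. So $[s_0v_0,a]$ is a commutator of the desired form $u_1t$ with $u_1\neq1$ and $t\notin[P_3,u_1]$, and by the first paragraph its order is divisible by the two distinct primes $p_2$ and $p_3$.

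The step I expect to be the real obstacle is the one asserting $P_1$ acts nontrivially on $P_3$. Nothing in the definition of a tower constrains the action of $P_1$ on $P_3$, and if $a$ did centralize $P_3$ then $[s_0,a]=1$ and every commutator of the above form would collapse to an element of $P_2$, i.e.\ a $p_2$-element — so some input forcing $P_1$ to move $P_3$ is indispensable. What supplies it is the hypothesis $P_2=[P_2,P_1]$, which says $P_2$ is ``spanned by its $a$-displacements'': triviality of the action of $a$ on $P_3$ would then propagate to triviality of the action of all of $P_2$, contradicting $\overline{P_2}\neq1$. The rest — the power formula for $ut$, the surjectivity of $v\mapsto[v,a]$, and the expansion of $[s_0v_0,a]$ — is routine commutator bookkeeping.
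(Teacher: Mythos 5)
Your proof is correct and is essentially the paper's own argument run in the contrapositive direction: your identity $[s_0v_0,a]=[s_0,a]^{v_0}[v_0,a]$ is the paper's $[cb^{-1},a]^{b}=[c,a][a,b]$, your norm-map computation of the order of $u_1t$ in $P_2\ltimes P_3$ is exactly the content of Lemma \ref{orderofav} (and its converse), and both proofs hinge on the surjectivity of $b\mapsto[b,a]$ on $P_2=[P_2,a]$ combined with Lemma \ref{acnoncop}. The only real variation is the endgame, where the paper deduces $\overline{P_1}=1$ from $[P_3,P_1]=1$ via Lemma \ref{centralizadordeumtermo}, while you deduce $\overline{P_2}=1$ directly from $P_2=[P_2,a]$ by a three-subgroups-type computation; both contradict the tower axioms, so your route is a valid (and slightly more self-contained) variant of the same proof.
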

\begin{proof} Suppose the lemma is false and $G$ is a CPPO-group.

	Write $P_1=\langle a\rangle$. For elements $1\neq b\in P_2$ and $c\in P_3$, the equality
	$$[c,a][a,b]=[cb^{-1},a]^{b}$$
implies that $[c,a][a,b]$ has prime power order. Therefore, Lemma \ref{orderofav} guarantees that 
	$[c,a]\in [P_3,[a,b]]$, which is equivalent to $[c,a]\in [P_3,[b,a]]$. Since $c$ can be chosen in $P_3$ arbitrarily, we get
	\begin{equation}\label{P3P1}
		[P_3,P_1]=[P_3,a]\leq [P_3,[b,a]],~\text{for any}~1\neq b\in P_2.
	\end{equation}
	On the other hand, $P_2$ is an abelian group and $C_{P_2}(a)=1$. It follows that the map $b\mapsto[b,a]$ is surjective on $P_2$. Therefore the containment \ref{P3P1} implies that
	\begin{equation}\label{P3P11}
		[P_3,P_1]\leq\bigcap_{1\neq b\in P_2}[P_3,b].
	\end{equation}
	We conclude from (\ref{P3P11}) and from Lemma \ref{acnoncop} that $[P_3,P_1]=1$, that is, $P_1=C_{P_1}(P_3)$. In view of Lemma \ref{centralizadordeumtermo} we conclude that $\overline{P_1}=1$, which contradicts the definition of tower. 
\end{proof}

\begin{lemma}\label{torrecasoextra}
	Let $G$ be a CPPO-group containing a tower $(P_1,P_2,P_3)$ with the following properties:
	\begin{enumerate}
		\item $P_1$ is cyclic;
		\item $P_2$ is extraspecial and $C_{P_2}(P_1)=\Phi(P_2)$;
		\item $P_3$ is abelian and $P_3=[P_3,\Phi(P_2)]$.
	\end{enumerate}
	Then $p_2=2$ and $P_2$ is isomorphic to the quaternion group $Q_8$. 
\end{lemma}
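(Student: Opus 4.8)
Write $Z=\Phi(P_2)=Z(P_2)$; since $P_2$ is extraspecial, $Z=\langle z\rangle$ is cyclic of order $p_2$ and $Z=[P_2,P_2]$. From $P_3=[P_3,z]$, the coprime decomposition $P_3=[P_3,z]\oplus C_{P_3}(z)$ forces $C_{P_3}(z)=1$, so $z$ acts fixed‑point‑freely on $P_3$. Moreover $Z$ is the unique minimal normal subgroup of $P_2$, and as $z$ acts nontrivially on $P_3$ the kernel of $P_2$ on $P_3$ is trivial, i.e. $P_2$ acts faithfully on $P_3$. The plan is to prove that $P_2$ has \emph{no non‑central element of order $p_2$}; granting this, $Q_8$ is the only extraspecial group all of whose non‑central elements have order $>p$, so $p_2=2$ and $P_2\cong Q_8$, as desired.

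\textbf{Reduction.} Suppose, for contradiction, that $g\in P_2\setminus Z$ has order $p_2$. Then $A:=\langle g,z\rangle$ is elementary abelian of rank $2$, acting faithfully and coprimely on the abelian group $P_3$. If every nontrivial element of $A$ acted fixed‑point‑freely on $P_3$, Lemma~\ref{fundresultkurzweil} (case~(1)) would force $A$ to be cyclic, a contradiction; since the powers of $z$ do act fixed‑point‑freely, some non‑central element $g^{j}z^{i}$ (of order $p_2$) has nontrivial centralizer in $P_3$. Replacing $g$ by it (which does not change $A$), we may assume $C_{P_3}(g)\neq 1$. Next, apply Lemma~\ref{autoofextra} to the coprime automorphism $\varphi$ of $P_2$ given by conjugation by $a$ (legitimate, since $C_{P_2}(\varphi)=C_{P_2}(a)=Z=\Phi(P_2)$): $g$ is $P_2$‑conjugate to $[x,a]$ for some $x\in P_2$. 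Since $P_3$ is $P_2$‑invariant, centralizers in $P_3$ of $P_2$‑conjugate elements are $P_2$‑conjugate, so after replacing $x$‑and the other data by suitable conjugates we may assume $g=[x,a]$ and still $C_{P_3}(g)\neq1$; fix $1\neq v\in C_{P_3}(g)$.

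\textbf{Core of the argument.} Work inside $M:=P_2P_3$, where $P_3\trianglelefteq M$ and $P_2\cap P_3=1$, so every element of $M$ is uniquely $g_1t$ with $g_1\in P_2$, $t\in P_3$. A short computation with the norm map over $\langle g_1\rangle$ shows that a commutator of $G$ lying in $M$ and written $g_1t$ as above can have prime power order only if $g_1=1$, or $g_1^{p_2}\neq1$ (in which case $g_1^{p_2}$ generates $Z$, forcing $C_{P_3}(g_1)=1$), or $g_1$ has order $p_2$ and $t\in[P_3,g_1]$. Combining this with the identity $[u,a][a,b]=[ub^{-1},a]^{b}$ (for $u\in P_3$, $b\in P_2$) and Lemma~\ref{orderofav} yields: \emph{whenever $b\in P_2\setminus Z$ and $|[b,a]|=p_2$, one has $[P_3,a]\subseteq[P_3,[b,a]]$}; and combining it with the identity $[g_0,aw]=[g_0,a]\,[g_0^{\,a},w]$ (for $w\in P_3$) yields: \emph{whenever $g_0\in P_2\setminus Z$ and $|[g_0,a]|=p_2$, one has $[P_3,g_0^{\,a}]\subseteq[P_3,[g_0,a]]$}. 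The aim is now to orchestrate such inclusions — applied to $g_0=x$, to $g_0=x^{a^{-1}}$, and to suitable further conjugates, exploiting that $g=[x,a]$ has order exactly $p_2$ to keep the relevant commutator‑elements of order $p_2$ — together with the identities $[P_3,z^{i}]=P_3$ for $i\neq0$, so as to feed Lemma~\ref{acnoncop} for the noncyclic abelian group $A=\langle g,z\rangle$ and conclude $[P_3,a]\leq\bigcap_{1\neq\alpha\in A}[P_3,\alpha]=1$. Once $a$ centralizes $P_3$ we have $C_{P_1}(P_3)=P_1$, whence by Lemma~\ref{centralizadordeumtermo} $\overline{P_1}=P_1/C_{P_1}(\overline{P_2})=1$, contradicting the definition of a tower. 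This contradiction shows $P_2$ has no non‑central element of order $p_2$, completing the proof.

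\textbf{Where the difficulty lies.} The naive candidate bad commutators do not work: for $v'\in C_{P_3}(g)$ one gets $[xv',a]=g\,[v',a]$, but the inclusion $[P_3,a]\subseteq[P_3,[x,a]]=[P_3,g]$ pins $[v',a]$ into $[P_3,g]$, so $[xv',a]$ has prime power order; the same phenomenon defeats every ``one‑shot'' construction. Hence the real work is the bookkeeping in the Core step: one must combine several of the above inclusions into Lemma~\ref{acnoncop} while keeping all auxiliary commutator‑elements of order $p_2$ — the case where some $[g_0,a]$ has order $p_2^2$ gives no information and has to be circumvented by the right choice of $g_0$. A related obstruction is that $\overline{P_2}=P_2/\Phi(P_2)$ need not admit an $\langle a\rangle$‑invariant isotropic $2$‑subspace, so one cannot shortcut the argument by exhibiting a sub‑tower $(\langle a\rangle,B,P_3)$ of abelian subgroups with $B$ noncyclic and $[B,a]=B$ and invoking Lemma~\ref{aaa} directly.
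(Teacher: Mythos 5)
There is a genuine gap: the ``Core of the argument'' is a plan, not a proof. You derive the inclusion $[P_3,a]\subseteq[P_3,[b,a]]$ from the identity $[c,a][a,b]=[cb^{-1},a]^{b}$ together with Lemma~\ref{orderofav} (this much agrees with the paper), but the decisive step --- orchestrating enough such inclusions to feed Lemma~\ref{acnoncop} while keeping every auxiliary commutator of order exactly $p_2$ --- is only announced (``The aim is now to orchestrate\ldots''), and your final paragraph concedes that this bookkeeping is precisely where the difficulty lies and that the obvious attempts fail. Since the contradiction is never actually reached, the reduction to ``$P_2$ has no non-central element of order $p_2$'' (itself a correct but superfluous detour through the classification of extraspecial groups) is left hanging, and the lemma is not proved.

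The idea you are missing is to bring the hypothesis $P_3=[P_3,\Phi(P_2)]$ into play through a \emph{second} commutator, namely $[cb^{-1},az]^{b}=[c,z][c,a]^{z}[a,b]$ for $1\neq z\in\Phi(P_2)$. Applying Lemma~\ref{orderofav} to this element gives $[c,z][c,a]^{z}\in[P_3,[b,a]]$; subtracting your first inclusion (legitimate because $z$ is central in $P_2$, so it commutes with $[b,a]$, and $P_3$ is abelian) yields $[c,z]\in[P_3,[b,a]]$ for every $c\in P_3$, hence $P_3=[P_3,\Phi(P_2)]\leq[P_3,[b,a]]$ for \emph{every} $b\in P_2\setminus\Phi(P_2)$ --- no hypothesis on the order of $[b,a]$ is needed. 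Combined with Lemma~\ref{autoofextra} (every non-central element of $P_2$ is conjugate to some $[b,a]$) and with $P_3=[P_3,z]$ for the central elements, this shows $C_{P_3}(b)=1$ for all $1\neq b\in P_2$, and Lemma~\ref{fundresultkurzweil} then forces $P_2$ to be quaternion, hence $Q_8$. This route avoids Lemma~\ref{acnoncop}, the order-$p_2$ reduction, and all of the case analysis you were struggling to set up.
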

\begin{proof} 
Write $P_1=\langle a\rangle$. For any elements $b\in P_2\setminus\Phi(P_2)$ and $c\in P_3$, the element $[c,a][a,b]=[cb^{-1},a]^b$ has prime power order, so by Lemma \ref{orderofav} we have \begin{equation}\label{af1}
		[c,a]\in [P_3,[b,a]].\end{equation} 
Moreover, for a nontrivial element $z\in \Phi(P_2)$, the element $[c,z][c,a]^z[a,b]=[cb^{-1},az]^b$ has prime power order and using again Lemma \ref{orderofav} we see that 
	\begin{equation}\label{af2}
		[c,z][c,a]^z\in [P_3,[b,a]].\end{equation} 
	Since $P_2$ is extraspecial, $z$ commutes with $[b,a]$ and it follows from (\ref{af1}) and (\ref{af2}) that
	$$[c,z]\in [P_3,[b,a]].$$
	By hypothesis we have $P_3=[P_3,\Phi(P_2)]$. Since the element $c$ can be chosen in $P_3$ arbitrarily, we conclude that
	\begin{equation}\label{af3}
		P_3=[P_3,[b,a]],~\text{for any}~b\in P_2\setminus\Phi(P_2).\end{equation}
Lemma \ref{autoofextra} guarantees that any element of $P_2\setminus\Phi(P_2)$ may be writen in the form $[b,a]^x$ for some $b\in P_2\setminus\Phi(P_2)$ and $x\in P_2$. We deduce from (\ref{af3}) and the equality $P_3=[P_3,\Phi(P_2)]$ that 
	$$C_{P_3}(b)=1,~\text{for any}~1\neq b\in P_2.$$
By virtue of Lemma \ref{fundresultkurzweil} we deduce that $p_2=2$ and $P_2$ is a quaternion group. Taking into account that $Q_8$ is the only extraspecial quaternion group, $P_2$ is isomorphic to $Q_8$. 
\end{proof}

\begin{lemma}\label{abelianorextra}
	Let $G$ be a CPPO-group and suppose that $G$ contains an irreducible tower $(P_1,P_2,P_3,P_4)$ with the following properties:
	\begin{enumerate}
		\item $C_{P_3}(P_4)=1$;
		\item $P_4$ is elementary abelian.
	\end{enumerate}
Let $i\in\{2,3\}$.	Then $\overline{P_i}$ is either abelian or extraspecial. 
\end{lemma}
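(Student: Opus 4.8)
The plan is to fix $i\in\{2,3\}$, assume $\overline{P_i}$ is nonabelian, and show it must then be extraspecial. First I would record what irreducibility gives for free about $Q:=\overline{P_i}$: by Definition \ref{DefirredTurulltower}(i) the subgroup $\Phi(Q)$ is elementary abelian and $Q'\leqslant\Phi(Q)\leqslant Z(Q)$, so $Q$ has class $2$; and since the full preimage in $P_i$ of $Z(Q)$ is a $P_1\cdots P_{i-1}$-invariant subgroup of $P_i$, Definition \ref{DefirredTurulltower}(iv) together with $Q\neq Z(Q)$ forces that preimage to be proper, i.e. $Z(Q)\subseteq\Phi(Q)$, whence $Z(Q)=\Phi(Q)$. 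Thus $Q$ is a special $p_i$-group, and the whole problem is reduced to showing $|Z(Q)|=p_i$.

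The second preparatory step is to produce a faithful coprime action of $Q$ on an elementary abelian group $V$. When $i=3$ this is immediate: $C_{P_3}(P_4)=1$ gives $\overline{P_3}=P_3$, acting faithfully on $V:=P_4$. When $i=2$, Definition \ref{DefirredTurulltower}(i) says $P_2$ centralizes $\Phi(P_3)$, and Lemma \ref{cc}(iv) shows an element of $P_2$ centralizing $P_3/\Phi(P_3)$ already centralizes $P_3$; hence $\overline{P_2}=P_2/C_{P_2}(P_3)$ acts faithfully on $V:=P_3/\Phi(P_3)$. In both cases $|V|$ is prime to $p_i$.

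The engine is the CPPO-hypothesis, applied through the commutator identities already exploited in Lemmas \ref{aaa} and \ref{torrecasoextra}. Taking $a$ a generator of $P_1$ and suitable lifts $b,c,z$ of elements of $Q$, $V$ and $Z(Q)$, the identities
\[
[c,a][a,b]=[cb^{-1},a]^{b}\qquad\text{and}\qquad [c,z][c,a]^{z}[a,b]=[cb^{-1},az]^{b}
\]
(legitimate because $z$ commutes with $b$ and with $[a,b]$ and $z^{b}=z$) display their left-hand sides as commutators, hence of prime power order; since $[a,b]$ is a $p_i$-element while $|V|$ is prime to $p_i$, Lemma \ref{orderofav} yields, whenever $[a,b]\neq1$, that $[V,a]\subseteq[V,[a,b]]$ and $[V,z]\subseteq[V,[a,b]]$ for every $z\in Z(Q)$. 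Moreover $[V,a]\neq1$, since otherwise $a\in C_{P_1}(V)\subseteq C_{P_1}(\overline{P_2})$ by Lemma \ref{centralizadordeumtermo}, contrary to $\overline{P_1}\neq1$. Because $z'\mapsto[a,z']$ is a homomorphism of $Z(Q)$ with image $[Z(Q),a]$, every nontrivial $w\in[Z(Q),a]$ equals some $[a,z']$, and the two inclusions then force $[V,w]=[V,Z(Q)]=:W$ for all such $w$, together with $1\neq[V,a]\subseteq W$. If $[Z(Q),a]$ were noncyclic, Lemma \ref{acnoncop} applied to its coprime action on $V$ would give $W=\bigcap_{1\neq w}[V,w]=1$, a contradiction; hence $[Z(Q),a]$ is cyclic.

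It remains to upgrade "$[Z(Q),a]$ cyclic'' to "$Z(Q)$ cyclic'', after which $Z(Q)$, being elementary abelian, has order $p_i$ and $Q$ is extraspecial. For this I would rerun the commutator analysis above with the elements of the elementary abelian subgroup $H_i\leqslant\overline{P_{i-1}}$ of Definition \ref{DefirredTurulltower}(iii) (satisfying $[H_i,Q]=Q$) in place of $a$, so as to reach all of $Z(Q)$, and then reconcile the various cyclic commutator subgroups obtained on $Z(Q)$ by means of Lemma \ref{fundresultkurzweil} and the coprimality of these actions on $Z(Q)$. Making this last reconciliation precise — that is, ruling out a special, non-extraspecial $Q$ on which the part of the tower below $P_i$ acts with several independent cyclic commutator subgroups on the centre, and in particular handling the edge case where $P_1\cdots P_{i-1}$ centralizes a nontrivial part of $Z(Q)$ — is the step I expect to be the main obstacle.
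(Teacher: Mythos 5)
Your opening reduction is exactly the paper's: from Definition \ref{DefirredTurulltower}(i) and (iv) one gets that either $\overline{P_i}$ is abelian or $Z(\overline{P_i})=\Phi(\overline{P_i})$ is elementary abelian, so everything comes down to showing $\Phi(\overline{P_i})$ is cyclic. But the engine you then propose does not produce that conclusion, and the gap you flag at the end is not a technical loose end — it is the whole statement. For $i=2$ the acting group below $P_2$ is just $P_1=\langle a\rangle$, and Definition \ref{DefirredTurulltower}(i) already says $P_1$ centralizes $\Phi(\overline{P_2})$; hence $[Z(Q),a]=1$ and your conclusion ``$[Z(Q),a]$ is cyclic'' is vacuous. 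Rerunning the analysis with the subgroup $H_i\leqslant\overline{P_{i-1}}$ of Definition \ref{DefirredTurulltower}(iii) cannot help either, since $H_i$ sits in $\overline{P_{i-1}}$ and $P_{i-1}$ also centralizes $\Phi(\overline{P_i})$ by Definition \ref{DefirredTurulltower}(i). For $i=3$ the same issue appears one level up: at best you bound $[Z(P_3),a]$ for $a\in P_1$, which says nothing about $Z(P_3)$ itself when $P_1$ acts trivially (or nearly trivially) on it. So no amount of ``reconciliation'' of cyclic commutator subgroups closes the argument: the part of $Z(Q)$ centralized by everything below $P_i$ is exactly what your method cannot see, and it is a priori as large as $Z(Q)$.

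The missing idea is Schur's lemma applied one level \emph{down} rather than commutator identities applied one level up. Let $K$ be the kernel of the action of $P_1P_2$ on $P_3/\Phi(P_3)$ (for $i=2$), respectively $L$ the kernel of the action of $P_1P_2P_3$ on $P_4$ (for $i=3$). By Definition \ref{DefirredTurulltower}(iv) these actions are irreducible as well as faithful, so the acting group has cyclic centre; it therefore suffices to show that $\Phi(\overline{P_i})$ lands in that centre. For $i=2$ this is immediate from irreducibility, since $[Q,P_1P_2]\leqslant C_{P_2}(P_3)\leqslant K$ for $Q$ the preimage of $\Phi(\overline{P_2})$ — no CPPO input at all. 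For $i=3$, $\Phi(P_3)$ is centralized by $P_2$ and $P_3$ for free, and the only place the CPPO hypothesis enters is in showing $[\Phi(P_3),P_1]=1$: if $[c,a]\neq1$ for some $c\in\Phi(P_3)$, $a\in P_1$, pick $b\in P_2$ with $[b,a]\neq1$ and observe that $[cb,a]=[c,a][b,a]$ is a commutator of order divisible by $p_2p_3$. Your heavy machinery (Lemma \ref{orderofav}, Lemma \ref{acnoncop}, the two-term identities) is the right toolkit for Lemmas \ref{aaa} and \ref{torrecasoextra}, but here it is aimed at the wrong target.
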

\begin{proof}
Note that, as the tower is irreducible, it is sufficient to check that $\Phi(\overline{P_i})$ is cyclic. In fact, the inverse image of $Z(\overline{P_i})$ is a $P_1\cdots P_{i-1}$-invariant subgroup of $P_i$. Thus if $Z(\overline{P_i})\nleq \Phi(\overline{P_i})$, Definition \ref{DefirredTurulltower} ensures that $\overline{P_i}=Z(\overline{P_i})$ is an abelian group. On the other hand, $\Phi(\Phi(\overline{P_i}))=1$ and $\Phi(\overline{P_i})\leq Z(\overline{P_i})$. In short, $\Phi(\overline{P_i})$ is elementary abelian and either $\Phi(\overline{P_i})=Z(\overline{P_i})$ or $\overline{P_i}$ is abelian. 
	
First, we prove that $\Phi(\overline{P_2})$ is cyclic. Let $K$ be the kernel of the action of $P_1P_2$ on $P_3/\Phi(P_3)$. Observe that $P_1P_2/K$ acts faithfully and irreducibly on $P_3/\Phi(P_3)$, so that $P_1P_2/K$ has cyclic centre. We will show that $\Phi(\overline{P_2})$ is a central subgroup of $P_1P_2/K$. This would guarantee that $\Phi(\overline{P_2})$ is cyclic. Let $Q$ be the inverse image of $\Phi(\overline{P_2})$ in $P_2$. From Definition \ref{DefirredTurulltower} we know that $$[Q,P_1P_2]\leq C_{P_2}(P_3)\leq K,$$ and consequently, 
	$$[P_1P_2/K,\Phi(P_2K/K)]=1.$$
	This means that $\Phi(\overline{P_2})$ is a central subgroup of $P_1P_2/K$, as claimed. 
	
Now we will prove that $\Phi(P_3)$ is cyclic. Let $L$ be the kernel of the action of $P_1P_2P_3$ over $P_4$. Observe that $P_1P_2P_3/L$ acts faithfully and irreducibly on $P_4$ and consequently has cyclic centre. Therefore, as $P_3\cap  L=1$, it is sufficient to check that $\Phi(P_3)\leq Z(P_1P_2P_3)$. But if this is not the case, since $\Phi(P_3)$ is centralized by both $P_2$ and $P_3$, we can find elements $a\in P_1$ and $c\in \Phi(P_3)$ for which $[c,a]\neq1$. Clearly we may assume that $P_1=\langle a\rangle$ and find an element $b\in P_2$ such that $[b,a]\neq1$. It follows that $[cb,a]=[c,a][b,a]$ has order divisible by $p_2$ and $p_3$, which is impossible because $G$ is a CPPO-group. The proof is now complete.
\end{proof} 

\begin{lemma}\label{abelianoextra2}
	Let $G$ be a CPPO-group having an irreducible tower $(P_1,P_2,P_3,P_4)$ such that
	\begin{enumerate}
		\item $\overline{P_2}$ is extraspecial;
		\item $C_{P_3}(P_4)=1$;
		\item $P_4$ is elementary abelian.
	\end{enumerate}
	Then $p_2=2$ and $\overline{P_2}$ is isomorphic to $Q_8$. 
\end{lemma}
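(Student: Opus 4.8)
The plan is to reduce the statement to Lemma~\ref{torrecasoextra} by passing to two carefully chosen CPPO-quotients. Write $P_1=\langle a\rangle$, cyclic by Definition~\ref{DefirredTurulltower}(ii). Since $C_{P_3}(P_4)=1$ we have $\overline{P_3}=P_3$, so $\overline{P_2}=P_2/C_{P_2}(P_3)$ acts faithfully on $P_3$; by Lemma~\ref{abelianorextra} the group $P_3$ is abelian or extraspecial, and in either case $\Phi(P_3)$ is abelian. The first thing I would establish is the equality $C_{\overline{P_2}}(P_1)=\Phi(\overline{P_2})$: the inverse image in $P_2$ of $C_{\overline{P_2}}(P_1)$ is a $P_1$-invariant subgroup, so by Definition~\ref{DefirredTurulltower}(iv) it is either all of $P_2$ --- impossible, as this would give $\overline{P_1}=1$ --- or its image lies in $\Phi(\overline{P_2})$; since $P_1$ centralizes $\Phi(\overline{P_2})$ by Definition~\ref{DefirredTurulltower}(i), the equality follows.

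Next I would observe that $C_{P_2}(P_3)$ is normal in $\Gamma:=P_1P_2P_3$: it is visibly $P_1$- and $P_2$-invariant, and $P_3$ actually centralizes it, since every element of $C_{P_2}(P_3)$ is fixed under conjugation by $P_3$. Similarly $N:=\Phi(P_3)C_{P_2}(P_3)$ --- a subgroup, because its two factors commute elementwise --- is normal in $\Gamma$. Hence $\hat G:=\Gamma/C_{P_2}(P_3)$ and $\hat G':=\Gamma/N$ are quotients of a subgroup of $G$ and are therefore CPPO-groups. In $\hat G$ the images of $P_1,P_2,P_3$ are isomorphic to $P_1,\overline{P_2},P_3$, while in $\hat G'$ they are isomorphic to $P_1,\overline{P_2},V$ with $V:=P_3/\Phi(P_3)$ elementary abelian; in both, $\overline{P_2}$ acts on the last factor exactly as it acts on $P_3$ (resp.\ on $P_3/\Phi(P_3)$).

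Now I would apply Definition~\ref{DefirredTurulltower}(iv) to the subgroup $[P_3,\Phi(\overline{P_2})]$ of $P_3$, which is $P_1P_2$-invariant ($P_2$-invariant because $\Phi(\overline{P_2})$ is normal in $\overline{P_2}$, and $P_1$-invariant because $P_1$ normalizes $P_3$ and centralizes $\Phi(\overline{P_2})$). Thus either $[P_3,\Phi(\overline{P_2})]=P_3$ or $[P_3,\Phi(\overline{P_2})]\le\Phi(P_3)$. In the first case $[V,\Phi(\overline{P_2})]=V$ and $(P_1,\overline{P_2},V)$ is a tower of $\hat G'$; in the second case $M:=[P_3,\Phi(\overline{P_2})]$ is abelian (being contained in $\Phi(P_3)$), nontrivial (the faithful action of $\overline{P_2}$ on $P_3$ restricts nontrivially to $\Phi(\overline{P_2})\ne1$, which is nontrivial as $\overline{P_2}$ is extraspecial), $\overline{P_2}$-invariant, and satisfies $[M,\Phi(\overline{P_2})]=M$ by Lemma~\ref{cc}(ii), so $(P_1,\overline{P_2},M)$ is a tower of $\hat G$. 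In either case the tower meets hypotheses (i)--(iii) of Lemma~\ref{torrecasoextra} (using $C_{\overline{P_2}}(P_1)=\Phi(\overline{P_2})$ for (ii)), so that lemma gives $p_2=2$ and $\overline{P_2}\cong Q_8$.

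The step needing genuine care is verifying that $(P_1,\overline{P_2},V)$ and $(P_1,\overline{P_2},M)$ really are towers of the respective quotient groups --- especially the non-triviality clause (iii) of Definition~\ref{DefTurulltower} for the top factor, where one uses $C_{\overline{P_2}}(P_1)=\Phi(\overline{P_2})$, the decomposition $\overline{P_2}=[\overline{P_2},P_1]\Phi(\overline{P_2})$ of Lemma~\ref{cc}(i), the fact that $\Phi(\overline{P_2})$ consists of non-generators, and (for $V$) the faithfulness of the $\overline{P_2}$-action on $P_3/\Phi(P_3)$ furnished by Lemma~\ref{cc}(iv) --- together with the normality assertions, which are exactly what make the passage to CPPO-quotients legitimate.
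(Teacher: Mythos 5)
Your argument is correct and follows essentially the same route as the paper: reduce to Lemma~\ref{torrecasoextra} by passing to the quotient of $P_1P_2P_3$ by $C_{P_2}(P_3)\Phi(P_3)$, after establishing $C_{\overline{P_2}}(P_1)=\Phi(\overline{P_2})$ and $P_3=[P_3,\Phi(\overline{P_2})]$ from irreducibility (the paper invokes Lemma~\ref{Casoloquotienttower} where you verify the tower axioms by hand). The only cosmetic difference is your Case~B, which is in fact vacuous: if $[P_3,\Phi(\overline{P_2})]\le\Phi(P_3)$, then Lemma~\ref{cc}(iv) forces $[P_3,\Phi(\overline{P_2})]=1$, contradicting the faithfulness of the $\overline{P_2}$-action on $P_3$ together with $\Phi(\overline{P_2})\ne1$ --- so the nontriviality you establish for $M$ already rules that case out.
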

\begin{proof}
	Set $N=C_{P_2}(P_3)\Phi(P_3)$ and observe that
	$$C_{P_2N/N}(P_3N/N)=1.$$
	It follows from Lemma \ref{Casoloquotienttower} that $(P_iN/N)_{i=1,2,3}$ is a tower of $P_1P_2P_3/N$. Since
	$$[P_2N/N,P_1N/N]=P_2N/N,$$
	it follows that 
	\begin{equation}\label{bf1}
		C_{P_2N/N}(P_1N/N)\leq\Phi(P_2N/N).\end{equation}
	 Let $Q$ be the inverse image of $\Phi(\overline{P_2})$. By the irreducibility of the tower $(P_1,P_2,P_3,P_4)$ we have 
	$$[Q,P_1]\leq C_{P_2}(P_3)\leq N,$$ 
	and so
	\begin{equation}\label{bf2}
		[\Phi(P_2N/N),P_1N/N]=1.
	\end{equation}
	It follows from (\ref{bf1}) and (\ref{bf2}) that 
	\begin{equation}\label{bf3}
		C_{P_2N/N}(P_1N/N)=\Phi(P_2N/N). 
	\end{equation}
Observe that $Q\unlhd P_1P_2$ and hence $[P_3,Q]$ is a $P_1P_2$-invariant subgroup of $P_3$ which is not contained in $\Phi(P_3)$ since $\overline{P_2}$ is extraspecial. The irreducibility of the tower $(P_1,P_2,P_3,P_4)$ now shows that $P_3=[P_3,Q]$. Therefore we have
	$$P_3N/N=[P_3N/N,\Phi(P_2N/N)].$$
	
Note that $P_2N/N\cong\overline{P_2}$ and so the tower $(P_iN/N)_{i=1,2,3}$ satisfies the hypotheses of Lemma \ref{torrecasoextra}. Hence $p_2=2$ and $P_2N/N$ is isomorphic to $Q_8$. The proof is complete. 
\end{proof}

The following elementary observation will be helpful later on.

\begin{lemma}\label{autodoquaternion}
Let $\varphi$ be an involutory automorphism of a group $G$ isomorphic to $Q_8$. There exists an element $u\in G$ such that $[u,\varphi]$ is the involution of $G$. 
\end{lemma}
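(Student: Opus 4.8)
The plan is to analyze the action of an involutory automorphism $\varphi$ on $Q_8$ directly, using the structure of $\operatorname{Aut}(Q_8)\cong S_4$. Write $Q_8=\{\pm1,\pm i,\pm j,\pm k\}$ with the unique involution $-1$ central and fixed by every automorphism. First I would dispose of the trivial case: if $\varphi$ acts trivially there is nothing subtle, but in fact $C_{Q_8}(\varphi)$ is a proper subgroup only when $\varphi\neq 1$, so I will assume $\varphi\neq 1$ (and if $\varphi=1$ the statement is vacuous in the way it is used, but one can still just pick $u=i$ with... no — so I should note the lemma presumably intends $\varphi$ to be a genuine, possibly inner, automorphism of order dividing $2$; since the conclusion only needs \emph{some} $u$, and $[u,\varphi]=u^{-1}u^{\varphi}$, I need the set $\{u^{-1}u^\varphi : u\in Q_8\}$ to contain $-1$).

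The key computation: since $\varphi^2=1$, for any $u$ the element $g=u^{-1}u^\varphi$ satisfies $g^\varphi = (u^\varphi)^{-1}u^{\varphi^2} = (u^\varphi)^{-1}u = (u^{-1}u^\varphi)^{-1} = g^{-1}$, so every such $g$ is inverted by $\varphi$. I want to show $-1$ actually occurs. Suppose not; then $u^{-1}u^\varphi\neq -1$ for all $u\in Q_8$. The map $u\mapsto u^{-1}u^\varphi$ sends $1\mapsto 1$ and sends $-1\mapsto 1$, and its image is exactly a left transversal-type set; more precisely, $[u,\varphi]=1$ iff $u\in C_{Q_8}(\varphi)$, so the fibres of $u\mapsto [u,\varphi]$ over the identity form $C_{Q_8}(\varphi)$, and in general the image has size $|Q_8|/|C_{Q_8}(\varphi)| = |[Q_8,\varphi]|$ only when $[Q_8,\varphi]$ is abelian and things line up — rather than lean on that, I will simply examine the three conjugacy classes of involutions in $\operatorname{Aut}(Q_8)\cong S_4$: the transpositions (lifting to order-two elements acting as, say, $i\mapsto i$, $j\mapsto j^{-1}$, which is the inner automorphism by $i$), the products of two transpositions (the inner automorphisms by $i,j,k$ composed appropriately, or the central-type ones), and note $S_4$ has involutions of cycle type $(2)$ and $(2,2)$. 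For a transposition-type $\varphi$, say $i^\varphi=i$, $j^\varphi=-j$, $k^\varphi=-k$: then $[j,\varphi]=j^{-1}j^\varphi = j^{-1}(-j) = -1$. For a $(2,2)$-type $\varphi$, say $i^\varphi=-i$, $j^\varphi=-j$, $k^\varphi=k$: then $[i,\varphi]=i^{-1}(-i)=-1$. So in either case an appropriate generator $u\in\{i,j,k\}$ works.

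Concretely, the proof reduces to: \textbf{Step 1}, recall $-1$ is the unique involution of $Q_8$ and is $\operatorname{Aut}$-invariant, and that $\operatorname{Aut}(Q_8)\cong S_4$ permutes the three cyclic subgroups $\langle i\rangle,\langle j\rangle,\langle k\rangle$ of order $4$; \textbf{Step 2}, classify the involutions of $S_4$ into the two types and observe that any involution of $\operatorname{Aut}(Q_8)$ either fixes one of $\langle i\rangle,\langle j\rangle,\langle k\rangle$ setwise while inverting it (transposition case) or inverts at least two of the generators among $\{i,j,k\}$ (double-transposition case), or — the remaining possibility — swaps two of the three subgroups; \textbf{Step 3}, in each case exhibit $u$: if $\varphi$ inverts $u\in\{i,j,k\}$ then $[u,\varphi]=u^{-2}=-1$ directly; and if $\varphi$ swaps $\langle i\rangle$ and $\langle j\rangle$, say $i^\varphi=\pm j$, then since $\varphi^2=1$ we get $j^\varphi=\pm i$ and one computes $[i,\varphi]=i^{-1}(\pm j) \in\{i^{-1}j, i^{-1}j^{-1}\}$, an element of order $4$, so that case needs a touch more care — but in that case $\varphi$ must fix $\langle k\rangle$ setwise, and since $\varphi$ already moves $i,j$ nontrivially and $\det$-type parity in $S_4$ forces $\varphi$ to act as $-1$ on $\langle k\rangle$ (an involution of $S_4$ that is a transposition of two points fixes the other two, but acting on $Q_8$ the "fixed" cyclic subgroup $\langle k\rangle$ can still be inverted), giving $k^\varphi=-k$ and hence $[k,\varphi]=-1$.

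The main obstacle is the bookkeeping in Step 3 for the case where $\varphi$ permutes two of the cyclic subgroups of order $4$: one must argue that $\varphi$ nonetheless inverts the third generator, which follows because an automorphism of $Q_8$ is determined by its action on two generators, the condition $\varphi^2=1$ pins down the signs, and a quick check of the finitely many sign patterns shows $[k,\varphi]=-1$ in every subcase. Since $\operatorname{Aut}(Q_8)$ is a concrete finite group, the entire argument is a finite verification, and I expect it to occupy only a few lines once the three cases are set up; no deep input beyond the standard facts about $Q_8$ and its automorphism group is needed.
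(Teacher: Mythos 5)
Your argument is correct, but it takes a genuinely different route from the paper's. You prove the statement directly by classifying the involutions of $\mathrm{Aut}(Q_8)$ through their action on the three cyclic subgroups $\langle i\rangle,\langle j\rangle,\langle k\rangle$: either $\varphi$ stabilizes all three (so it is a nontrivial inner automorphism and inverts two of the generators, whence $[u,\varphi]=u^{-2}=-1$ for such a generator $u$), or it swaps two of them, in which case the condition $\varphi^2=1$ forces the sign pattern $i^\varphi=\pm j$, $j^\varphi=\pm i$ with matching signs, and then $k^\varphi=i^\varphi j^\varphi=ji=-k$, so $[k,\varphi]=-1$. That computation is right and the case split is exhaustive, so the proof stands; the only blemish is expository: you attach the labels ``transposition'' and ``double transposition'' to the wrong classes (the inner automorphisms of $Q_8$ are the double transpositions of $S_4\cong\mathrm{Aut}(Q_8)$, i.e.\ the kernel $V_4$ of the map to $S_3$, while the transpositions are the automorphisms swapping two of the three subgroups), but this does not affect the trichotomy you actually verify. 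The paper instead argues by contradiction without ever invoking $\mathrm{Aut}(Q_8)$: if no $u$ works, then since $\varphi$ induces an involution on $G/\Phi(G)\cong V_4$ it fixes a nontrivial coset, giving $u\notin\Phi(G)$ with $[u,\varphi]\in\Phi(G)\setminus\{-1\}=\{1\}$, so $C_G(\varphi)=\langle u\rangle$ has index $2$; then for $x\notin C_G(\varphi)$ one has $x^\varphi\notin\{x,x^{-1}\}$, and a short computation shows $xx^\varphi$ is fixed by $\varphi$, forcing $x$ to be central, a contradiction. Your version is constructive (it exhibits $u$ explicitly) and transparent once the automorphism group is known; the paper's is slightly slicker in that it uses only the subgroup structure of $Q_8$ and avoids the enumeration of automorphisms.
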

\begin{proof}
Suppose that this is false. Let $u\in G\setminus\Phi(G)$ be such that $u^\varphi\Phi(G)=u\Phi(G)$. We have $[u,\varphi]\in \Phi(G)$ and so $[u,\varphi]=1$, that is, $\langle u\rangle\leq C_G(\varphi)$. Since $\varphi$ is nontrivial, we get $\langle u\rangle=C_G(\varphi)$.
	Choose an element $x\in G\setminus C_G(\varphi)$. We certainly have $x^\varphi\neq x$. The assumption that the lemma is false implies that $x^\varphi\neq x^{-1}$. Let $y=x^\varphi$. As $\varphi$ is of order two, we have that $y^\varphi=x$. On the one hand, $(xy)^\varphi=x^\varphi y^\varphi=yx$. On the other hand, as $G=C_G(\varphi)\cup\{x,y,x^{-1},y^{-1}\}$, we have $xy\in C_G(\varphi)$ and so $xy=yx$. It follows that $x\in Z(G)$. This is a contradiction since $|Z(G)|=2$. 
\end{proof}

We are now ready to prove that the Fitting height of a soluble CPPO-group is at most three.
\begin{proposition}\label{a1}
	Let $G$ be a soluble CPPO-group. Then $h(G)\leqslant3$. 
\end{proposition}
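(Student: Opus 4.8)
The plan is to argue via Turull towers. By Lemma~\ref{heightequaltower}, $h(G)$ equals the maximal height of a tower in $G$, and by Lemma~\ref{containirredtower} every tower contains an irreducible tower of the same height; so it suffices to show that $G$ has no irreducible tower of height~$4$. Assume to the contrary that $(P_1,P_2,P_3,P_4)$ is one, and first normalise it. Passing from $P_4$ to $P_4/\Phi(P_4)$ leaves an irreducible tower (all the conditions of Definition~\ref{DefirredTurulltower} are inherited, and the top factor stays nontrivial since $C_{P_3}(P_4/\Phi(P_4))=C_{P_3}(P_4)$ by Lemma~\ref{cc}), so we may assume $P_4$ is elementary abelian; replacing $P_3$ by $P_3/C_{P_3}(P_4)$ — which is again legitimate because $C_{P_3}(P_4)$ is $P_1P_2P_3$-invariant and coincides with $C_{P_3}(\overline{P_4})$, so the bars $\overline{P_1},\overline{P_2},\overline{P_3}$ are unaffected — we may in addition assume $C_{P_3}(P_4)=1$, hence $\overline{P_3}=P_3$ and $\overline{P_4}=P_4$. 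Now Lemma~\ref{abelianorextra} gives that each of $\overline{P_2}$ and $P_3$ is abelian or extraspecial, and Lemma~\ref{abelianoextra2} adds that if $\overline{P_2}$ is extraspecial then $\overline{P_2}\cong Q_8$ and $p_2=2$. Whenever $P_3$ is extraspecial I would also record that $P_4=[P_4,\Phi(P_3)]$, since $[P_4,\Phi(P_3)]$ is a $P_1P_2P_3$-invariant subgroup of $P_4$ which is nontrivial (as $C_{P_3}(P_4)=1$) and so equals $P_4$ by condition~(iv) of Definition~\ref{DefirredTurulltower}; and I fix the elementary abelian $H_3\le\overline{P_2}$ with $[H_3,P_3]=P_3$ furnished by condition~(iii).

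The argument then splits according to the structure of $P_3$ and $\overline{P_2}$. Suppose first that $P_3$ is abelian. If $H_3$ is cyclic, lift a generator to $b\in P_2$: then $(\langle b\rangle,P_3,P_4)$ is a tower of abelian groups, $\langle b\rangle$ is cyclic, $P_3$ is noncyclic by Lemma~\ref{P3 e nao ciclico}, and $P_3=[P_3,\langle b\rangle]$, so Lemma~\ref{aaa} yields a commutator of non--prime-power order, a contradiction. If $H_3$ is noncyclic, then $\overline{P_2}$ is noncyclic; as $Q_8$ has no noncyclic elementary abelian subgroup, $\overline{P_2}$ is then abelian, and factoring $P_1P_2P_3$ by $C_{P_2}(P_3)\Phi(P_3)$ and using Lemma~\ref{Casoloquotienttower} gives a height-$3$ tower of abelian groups whose middle term is the noncyclic abelian group $\overline{P_2}$ with full commutator under the bottom term, so again Lemma~\ref{aaa} applies. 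Suppose next that $P_3$ is extraspecial. If $\overline{P_2}$ is noncyclic abelian, the same quotient argument and Lemma~\ref{aaa} finish it. If $\overline{P_2}\cong Q_8$, then from $[H_3,P_3/\Phi(P_3)]=P_3/\Phi(P_3)$ and $H_3=Z(\overline{P_2})$ the central involution of $\overline{P_2}$ acts fixed-point-freely on $P_3/\Phi(P_3)$ (coprime action of a $\Z/2$), so lifting it to $b\in P_2$ makes $(\langle b\rangle,P_3,P_4)$ satisfy the hypotheses of Lemma~\ref{torrecasoextra}; that lemma then forces $p_3=2$, contradicting $p_2=2\neq p_3$.

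There remains the case $\overline{P_2}$ cyclic and $P_3$ extraspecial. Here $\overline{P_2}=H_3=\langle\overline b\rangle$ acts faithfully on $P_3$ with $[\overline b,P_3/\Phi(P_3)]=P_3/\Phi(P_3)$, so $\overline b$ is fixed-point-free on $P_3/\Phi(P_3)$; Lemma~\ref{torrecasoextra} applied to $(\langle b\rangle,P_3,P_4)$ now forces $P_3\cong Q_8$ and $p_3=2$, whence $p_2$ is odd, $\overline{P_2}\cong\Z/3$ (the only fixed-point-free odd-order automorphism action on $Q_8/\Phi(Q_8)\cong\Z/2\times\Z/2$), and $\overline{P_1}\cong\Z/2$ (condition~(ii) of Definition~\ref{DefirredTurulltower} together with $[\overline{P_1},\overline{P_2}]=\overline{P_2}$). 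Moreover $P_1$ acts nontrivially on $P_3$: if it centralised $P_3$ it would centralise $\overline{P_2}=P_2/C_{P_2}(P_3)$, contradicting $[\overline{P_1},\overline{P_2}]=\overline{P_2}\neq1$. I would then examine $K:=P_1P_2P_3$, in which $P_3\cong Q_8$ is normal, $Z(P_3)$ is central, and the induced map $K\to\operatorname{Aut}(Q_8)\cong S_4$ has image containing $\operatorname{Inn}(Q_8)$, the order-$3$ automorphism coming from $\overline{P_2}$, and an involution from $\overline{P_1}$ that inverts it — so the image is all of $S_4$. From this, together with $Z(P_3)\le Z(K)$, one locates inside $K$ a section isomorphic to $\mathrm{GL}_2(3)$ (or, more directly, produces an explicit element of $K$ that is a commutator of order divisible by two primes), and since the derived subgroup $\mathrm{SL}_2(3)$ of $\mathrm{GL}_2(3)$ contains commutators of order $6$, such a section — hence $G$ — cannot be a CPPO-group. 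Lemma~\ref{autodoquaternion} is the natural device for pinning down the elements involved.

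I expect this last case to be the main obstacle: in it the earlier dichotomies do not close the argument and the contradiction has to be produced by hand, essentially by recognising the $\mathrm{GL}_2(3)$-phenomenon inside $P_1P_2P_3$; all the other cases reduce quickly to Lemma~\ref{aaa} or Lemma~\ref{torrecasoextra}.
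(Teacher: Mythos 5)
Your overall strategy is the paper's: reduce to an irreducible tower $(P_1,P_2,P_3,P_4)$ with $P_4$ elementary abelian and $C_{P_3}(P_4)=1$, then eliminate each configuration of $\overline{P_2}$ and $P_3$ via Lemmas \ref{aaa}, \ref{abelianorextra}, \ref{abelianoextra2} and \ref{torrecasoextra}. All of your cases except the last one are correct and, after reshuffling, coincide with the paper's Cases 1--3. The genuine gap is exactly where you predicted it: $\overline{P_2}$ cyclic, $P_3\cong Q_8$, $p_1=2$, $p_2=3$. Two problems arise there. First, the $\mathrm{GL}_2(3)$-section idea does not come for free: the extension $1\to C_{P_1P_2P_3}(P_3)\to P_1P_2P_3\to S_4\to 1$ need not split and the centralizer of $P_3$ may be large, so ``locating a $\mathrm{GL}_2(3)$ section'' is neither obvious nor necessary. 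Second, and more importantly, to invoke Lemma \ref{autodoquaternion} you must know that a generator $a$ of $P_1$ induces an automorphism of $P_3$ of order exactly $2$ (not $4$ or $8$); the fact that $\overline{P_1}=P_1/C_{P_1}(\overline{P_2})$ has order $2$ says nothing about the order of the automorphism $a$ induces on $P_3$. The missing step, which is the paper's key move, is this: the image of $P_1P_2$ in $\mathrm{Aut}(Q_8)\cong S_4$ has order at least $6$ and cyclic Sylow $2$-subgroup (being an image of the cyclic group $P_1$), and the only subgroups of $S_4$ of order at least $6$ with nontrivial cyclic Sylow $2$-subgroup are the copies of $S_3$; hence $a$ acts on $P_3$ as an involution. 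Lemma \ref{autodoquaternion} then gives $u\in P_3$ with $[u,a]=z$ the central involution of $P_3$, and since $z\in\Phi(P_3)\leq Z(P_1P_2P_3)$ while $[b,a]$ is a nontrivial $3$-element, the commutator $[ub,a]=[u,a][b,a]$ has order divisible by $6$ --- the explicit bad commutator you were after.

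A more minor remark: your normalization passes to the images of the tower modulo $\Phi(P_4)$ and then modulo $C_{P_3}(P_4)$ and asserts that irreducibility survives; this is true but requires rechecking conditions (i)--(iv) of Definition \ref{DefirredTurulltower} in the quotient. The paper sidesteps this by taking a minimal counterexample $G=P_1P_2P_3P_4$, so that $\Phi(G)=1$ forces $P_4$ (normal in $G$) to be elementary abelian, and minimality applied to $G/C_{P_3}(P_4)$ forces $C_{P_3}(P_4)=1$ without ever having to transport irreducibility to a quotient tower.
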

\begin{proof}
Assume that the result is false and let $G$ be a counterexample of minimal possible order. By minimality, $h(G)=4$. Moreover, $\Phi(G)=1$ since $h(G/\Phi(G))=h(G)$. 
	
	Using Lemmas \ref{heightequaltower} and \ref{containirredtower} we may write $G=P_1P_2P_3P_4$ where $(P_1,P_2,P_3,P_4)$ is an irreducible tower. Here $P_4$ is normal in $G$ and so $\Phi(P_4)\leq\Phi(G)=1$, that is, $P_4$ is an elementary abelian $p_4$-group. Putting $N=C_{P_3}(P_4)$, observe that $N$ is normal and 
	$$C_{P_3/N}(P_4N/N)=1.$$
It follows from Lemma \ref{Casoloquotienttower} that $(P_iN/N)_{i=1,2,3,4}$ is a tower of $G/N$. Because of Lemma \ref{heightequaltower} and the minimality of $G$ we conclude that $N=1$. Hence, $P_3=\overline{P_3}$. Moreover, Lemma \ref{abelianorextra} shows that $\overline{P_i}$ is either abelian or extraspecial for $i\in\{2,3\}$ and $\Phi(P_3)\leq Z(P_1P_2P_3)$.
	
The remaining part of the proof consists in analysis of the following three possibilities for the group $\overline{P_2}$: either $\overline{P_2}$ is cyclic, or abelian noncyclic, or extraspecial. The proof will be complete once we show that in all these cases $G$ contains a commutator whose order is not a prime power.
	
	\textbf{Case 1.} Assume that $\overline{P_2}$ is a cyclic group and write $\overline{P_2}=\langle \overline{b}\rangle$. Observe that $P_3=[P_3,b]$ and $C_{P_3}(b)=\Phi(P_3)$. Thus if $P_3$ is an abelian group, Lemma \ref{aaa} shows that the subgroup $P_2P_3P_4$ contains a commutator whose order is not a prime power, a contradiction. Therefore $P_3$ is not abelian. Lemma \ref{abelianorextra} now shows that $P_3$ is an extraspecial $p_3$-group. Observe that $[P_4,\Phi(P_3)]$ is a normal subgroup contained in $P_4$. Moreover, $[P_4,\Phi(P_3)]\neq1$ since $C_{P_3}(P_4)=1$. It follows from the irreducibility of the tower that $P_4=[P_4,\Phi(P_3)]$. Now, applying Lemma \ref{torrecasoextra} to the tower $(P_2,P_3,P_4)$ we deduce that $p_3=2$ and $P_3\cong Q_8$. Consequently, we have $p_2=3$ and $p_1=2$. 
	
Set $K=C_{P_1P_2}(P_3)$. As neither $P_1$ nor $P_2$ acts trivially on $P_3$, we have that $P_1P_2/K$ is isomorphic to a subgroup of $S_4$ whose order is at least 6. Since the Sylow 2-subgroup of $P_1P_2/K$ is cyclic, it follows that $|P_1P_2/K|=6$, and in particular, $P_{1}^{2}\leq K$. Let $a\in P_1$ be a generator of $P_1$. Note that $a$ induces an involutory automorphism of $P_3$. Lemma \ref{autodoquaternion} now shows that there exists an element $u\in P_3$ for which $[u,a]$ is the involution of $P_3$. Since $\overline{P_2}=\langle\overline b\rangle$, we have $[b,a]\neq1$ and so $[ub,a]=[u,a][b,a]$ has order divisible by 6, a contradiction. 
	
	\textbf{Case 2.} Now we deal with the case where $\overline{P_2}$ is an abelian noncyclic group. Set $M=C_{P_2}(P_3)\Phi(P_3)$ and observe that
	$$
	C_{P_2M/M}(P_3M/M)=1.
	$$
In view of Lemma \ref{Casoloquotienttower} we deduce that $(P_iM/M)_{i=1,2,3}$ is a tower of $P_1P_2P_3/M$. 
	The irreducibility of the tower $(P_1,P_2,P_3,P_4)$ shows that 
	$$
	P_2M/M=[P_1M/M,P_2M/M].
	$$
	It follows from Lemma \ref{aaa} that $P_1P_2P_3/M$ has a commutator whose order is not a prime power, a contradiction.
	
	\textbf{Case 3.} It remains to handle the case where $\overline{P_2}$ is an extraspecial $p_2$-group. Lemma \ref{abelianoextra2} shows that $p_2=2$ and $\overline{P_2}$ is isomorphic to $Q_8$. 
	
Choose $b\in P_2$ such that $\Phi(\overline{P_2})=\langle\overline{b}\rangle$. We have $[P_3,b]=P_3$ and $C_{P_3}(b)=\Phi(P_3)$. Lemma \ref{aaa} shows that $P_3$ is not abelian and it follows from Lemma \ref{abelianorextra} that $P_3$ is an extraspecial $p_3$-group. Therefore $P_4=[P_4,\Phi(P_3)]$ and thus the tower $(\langle b\rangle,P_3,P_4)$ satisfies the hypotheses of Lemma \ref{torrecasoextra}. Since $p_3\neq2$, it follows that $\langle b\rangle P_3P_4$ contains a commutator whose order is not a prime power. This is the final contradiction.
\end{proof} 

As usual, if $\pi$ is a set of primes, we write $O_\pi(G)$ to denote the maximal normal $\pi$-subgroup of $G$.
\begin{lemma}\label{opelinha}
	Let $G$ be a CPPO-group and let $N$ be a nilpotent normal subgroup of $G$. Then there is a prime $p\in\pi(N)$ such that $O_{p'}(N)\leq Z(G)$. 
\end{lemma}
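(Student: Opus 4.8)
The plan is to reduce, via coprime-action machinery, to showing that for a nilpotent normal $N\trianglelefteq G$ one cannot have two distinct primes $p,q\in\pi(N)$ with both $O_{p'}(N)$ and $O_{q'}(N)$ failing to lie in $Z(G)$; this immediately forces the conclusion since $N$ is the direct product of its Sylow subgroups and $O_{p'}(N)$ is the product of all of them except the Sylow $p$-subgroup. So first I would write $N=\prod_{r\in\pi(N)}N_r$ with $N_r$ the (normal, characteristic) Sylow $r$-subgroup, and observe that $O_{r'}(N)\le Z(G)$ is equivalent to $[N_s,G]=1$ for every $s\in\pi(N)$ with $s\ne r$. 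Thus the statement to be proved is: there is at most one prime $s\in\pi(N)$ with $[N_s,G]\ne 1$ — equivalently, the primes $s$ for which $G$ acts nontrivially on $N_s$ number at most one. Indeed if $[N_s,G]=1$ for all but (at most) one prime, picking that exceptional prime as $p$ gives $O_{p'}(N)=\prod_{s\ne p}N_s\le Z(G)$; and if $G$ acts trivially on all of $N$ we may take $p$ to be any prime in $\pi(N)$.

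The core is therefore: suppose $s\ne t$ are distinct primes in $\pi(N)$ with $[N_s,G]\ne1$ and $[N_t,G]\ne1$, and derive a non-prime-power commutator. Since $N_s$ is a normal $s$-subgroup on which $G$ acts nontrivially, there is an element $g\in G$ of order prime to $s$ — e.g. replacing $g$ by an appropriate power, a $q$-element for some prime $q\ne s$ — with $[N_s,g]\ne1$; similarly a prime-to-$t$ element $h\in G$ with $[N_t,h]\ne1$. Passing to $G/[N_s,g,g][N_t,h,h]$ is not quite the right move; instead I would work inside the subgroup $H=\langle N_s,N_t,g,h\rangle$ and aim to exhibit $x,y$ with $[x,y]$ divisible by two distinct primes. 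The cleanest route: use coprime action of $\langle g\rangle$ on $N_s$ to find $v\in N_s$ with $[v,g]\ne1$ of order a power of $s$, and of $\langle h\rangle$ on $N_t$ to find $w\in N_t$ with $[w,h]\ne1$ of order a power of $t$. If we can arrange the \emph{same} automorphism to act nontrivially on both $N_s$ and $N_t$, then as in the proof of Lemma~\ref{abelianorextra} (Case 3 of Proposition~\ref{a1}) the commutator $[vw,\text{that element}]=[v,\cdot][w,\cdot]$ — the two factors commuting since $v\in N_s$, $w\in N_t$ lie in coprime direct factors — has order divisible by both $s$ and $t$, a contradiction.

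The main obstacle is precisely producing a \emph{single} element of $G$ acting nontrivially on both Sylow factors. This need not be literally true for $g$ or $h$ chosen independently, so I would argue as follows. The set $S$ of primes $s\in\pi(N)$ with $[N_s,G]\ne1$ is what we must bound by $1$; suppose $|S|\ge 2$ and pick $s,t\in S$. For each, $[N_s,O^s(G)]\ne 1$ where $O^s(G)$ is generated by the $s'$-elements, because an $s$-group acts trivially modulo the Frattini subgroup on a normal $s$-subgroup only in the trivial way is not available — rather, $[N_s,G]=[N_s,G,G]$ and $G/C_G(N_s)$ has a normal $s$-complement acting nontrivially, so some element $g_s$ of order coprime to $s$ moves $N_s$; likewise $g_t$ coprime to $t$ moves $N_t$. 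Now consider the element $g_s g_t$, or better, decompose into prime-power parts: some prime-power-order element $u$ (a $q$-element, $q\ne s$) satisfies $[N_s,u]\ne1$. If also $q\ne t$, I would then ask whether $[N_t,u]\ne1$; if yes we are done by the paragraph above. If not, we instead have $[N_t,u]=1$, and I would bring in a $q'$-element or a different prime to force the issue, iterating. To make this terminate cleanly, the honest approach is: let $g$ be any element with $[N_s,g]\ne1$ and $g$ of prime-power order $q^a$, $q\ne s$; if $q\ne t$ and $[N_t,g]=1$, replace $g$ by $gh$ where $h$ is a suitable prime-power element moving $N_t$ and commuting with $N_s$ appropriately — but products of prime-power elements need not have prime-power order, which is in fact the engine of the contradiction. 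I expect the write-up to choose $g\in G$ of order $q^a$ with $[N_s,g]\ne1$, $h\in G$ of order $r^b$ with $[N_t,h]\ne1$, note $[N_s,h]$ and $[N_t,g]$ may vanish, pick $0\ne v\in[N_s,g]$ and $0\ne w\in[N_t,h]$, and then examine the commutator $[v w,\, g\,]$ together with $[vw, h]$: in one of $G$, $N_s\rtimes\langle g\rangle$, or a suitable section, one of these — or the single commutator $[v,g]\cdot[w,h]$ realised as a genuine commutator $[?,?]$ in $\langle N_s,N_t,g,h\rangle$ via the identity $[xy^{-1},ab]^{\,y}=\cdots$ used in Lemma~\ref{aaa} and Lemma~\ref{torrecasoextra} — has order divisible by $s$ and $t$. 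Thus the key computational step mirrors those earlier lemmas, and the only genuinely new point is the bookkeeping that reduces ``$O_{p'}(N)\le Z(G)$ for some $p$'' to ``$|S|\le 1$''.
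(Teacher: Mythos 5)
Your reduction of the lemma to the claim that at most one prime $s\in\pi(N)$ satisfies $[N_s,G]\neq 1$ is correct, and your intended contradiction is the right one: if a single element $g$ moves both $N_s$ and $N_t$, then for suitable $a\in N_s$, $b\in N_t$ the commutator $[ab,g]=[a,g][b,g]$ is a product of a nontrivial $s$-element and a nontrivial $t$-element that commute, hence not of prime power order. (Note that you do not need $g$ to have prime power order or to act coprimely for this; the two factors $[a,g]\in N_s$ and $[b,g]\in N_t$ already supply the two primes.) But you correctly flag the only real difficulty --- producing one element that moves both Sylow factors --- and then you never resolve it. Your final paragraph cycles through coprime-action tricks, prime-power decompositions, and replacements $g\mapsto gh$, and ends with ``I expect the write-up to\ldots{} examine\ldots{} one of these,'' which is not an argument. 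That is a genuine gap, not a presentational one.

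The missing idea is elementary: the elements of $G$ that centralize $N_s$ form the subgroup $C_G(N_s)$, and likewise for $N_t$ (the paper phrases this with $P$ and $O_{p'}(N)$ in place of $N_s$ and $N_t$). If \emph{no} element of $G$ moves both factors, then $G=C_G(N_s)\cup C_G(N_t)$; but a group is never the union of two proper subgroups, and both centralizers are proper by the assumption $[N_s,G]\neq1\neq[N_t,G]$. So some $g$ moves both, and the displayed commutator finishes the proof. With this one observation inserted in place of your last paragraph, your write-up becomes a correct proof essentially identical in substance to the paper's; the coprime-action machinery and the prime-power constraints on the acting element can all be deleted.
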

\begin{proof}
	If $N\leq Z(G)$ we have nothing to prove. Assume $N\nleq Z(G)$. There exists a prime number $p$ for which the Sylow $p$-subgroup $P$ of $N$ is not central in $G$. Then $G=C_{G}(P)\cup C_{G}(O_{p'}(N))$. Indeed, suppose $G\neq C_{G}(P)\cup C_{G}(O_{p'}(N))$ and choose $g\in G\setminus(C_{G}(P)\cup C_{G}(O_{p'}(N))) $. There are $a\in P$ and $b\in O_{p'}(N)$ such that $[a,g]\neq1$ and $[b,g]\neq1$. The equality $[ab,g]=[a,g][b,g]$ shows that $G$ contains a commutator whose order is not a prime power, a contradiction. Hence, $G=C_{G}(P)\cup C_{G}(O_{p'}(N))$. It is well-known that a nontrivial group cannot be a union of two proper subgroups. Since $P$ is not central, $G=C_G(O_{p'}(N))$, that is, $O_{p'}(N)\leq Z(G)$. 
\end{proof}

In what follows we write $\gamma_\infty(G)$ to denote the intersection of the lower central series of a group $G$.
\begin{proposition}\label{a2}
	Let $G$ be a soluble CPPO-group. Then $|\pi(G')|\leqslant3$. 
\end{proposition}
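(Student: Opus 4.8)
The plan is to bound $|\pi(G')|$ by combining Proposition \ref{a1} with the structure of a Fitting series of length at most $3$, using Lemma \ref{opelinha} to control the primes occurring in each nilpotent factor. First I would reduce to the case where $G$ is not nilpotent (if $G$ is nilpotent then $G'$ is contained in the Frattini subgroup and the problem is easy, or one can induct), and consider the Fitting subgroup $F=F(G)$ together with the normal series $1\leq F_1\leq F_2\leq F_3=G$ supplied by $h(G)\leq 3$, where $F_1=F(G)$, $F_2/F_1=F(G/F_1)$, and $F_3/F_2$ is nilpotent. The key point is that $G'$ is built from at most these three nilpotent layers, so if each layer contributed at most one prime to $\pi(G')$ we would be done; the difficulty is that a priori a nilpotent layer can involve many primes, and we must use the CPPO hypothesis to collapse this.

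The main tool will be Lemma \ref{opelinha}: for a nilpotent normal subgroup $N$, there is a prime $p\in\pi(N)$ with $O_{p'}(N)\leq Z(G)$. Applied to $N=F=F(G)$ this says that, modulo the center, the Fitting subgroup is essentially a $p$-group for a single prime $p=p_1$. Since the center is central it contributes nothing to $G'$ coming through commutators with $F$, so effectively only $p_1$ matters at the bottom layer. I would then pass to $G/Z(G)$, or more carefully to $G/O_{p_1'}(F(G))$, and repeat the argument on the image of $F_2$: its Fitting-type layer is, again modulo center, a $q$-group for a single prime $p_2=q$. Finally the top factor $F_3/F_2$ is nilpotent, and one more application of Lemma \ref{opelinha} (to an appropriate nilpotent normal subgroup of the relevant quotient) pins it to a single prime $p_3$. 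Hence $\pi(G')\subseteq\{p_1,p_2,p_3\}$, giving $|\pi(G')|\leq 3$. Along the way I would use $\gamma_\infty$ and the standard fact that in a soluble group $G$ acting on itself, $G'$ lies inside the product of the Fitting layers, and that a central subgroup $Z(G)$ never contributes to $G'$; the role of $\gamma_\infty(G)$ is to strip off the central part cleanly when iterating.

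The step I expect to be the main obstacle is handling the interaction between consecutive layers correctly: applying Lemma \ref{opelinha} in a quotient $G/M$ requires that $M$ be normal in $G$ and that the relevant subgroup be nilpotent normal in $G/M$, and one must verify that the primes discarded (those $p'$-parts that become central) really do not reappear in $G'$ through commutators across layers. Concretely, after killing $O_{p_1'}(F)$ one must check that in the quotient the second Fitting layer is still nilpotent and normal so that Lemma \ref{opelinha} applies again, and that no prime outside $\{p_1,p_2\}$ divides the order of the image of $F_2$ modulo its center in $G/O_{p_1'}(F)$; a clean way to organize this is to argue that if some prime $r\notin\{p_1,p_2,p_3\}$ divided $|G'|$, then it would divide the order of some noncentral part of a nilpotent layer, and then one could produce an element $ab$ with $[a,b]$-type commutator — or rather a commutator $[x,y]$ — of order divisible by two distinct primes, contradicting CPPO exactly as in the proof of Lemma \ref{opelinha}. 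I would also double-check the base case where $h(G)\leq 2$, where only two primes arise, so the bound $3$ is not tight there but still valid.
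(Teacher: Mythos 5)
Your plan has the right ingredients --- Proposition \ref{a1}, Lemma \ref{opelinha}, and a decomposition of $G$ into at most three nilpotent layers each contributing one prime --- but the step you yourself flag as the main obstacle is exactly where the argument is incomplete, and the way you set it up (upper Fitting series, ``central subgroups never contribute to $G'$'') does not close it. First, the assertion that a central subgroup never contributes to $G'$ is false in the form you need: $Z(G)\cap G'$ can be nontrivial (extraspecial groups), so knowing that $O_{p'}(F(G))\leq Z(G)$ does not by itself exclude the primes of $O_{p'}(F(G))$ from $\pi(G')$. Second, with the upper Fitting series the same prime can occur in several layers and a prime that is ``discarded'' as central at one layer can reappear noncentrally higher up, so the bookkeeping ``each layer contributes at most one prime to $G'$'' is not established by your sketch; you defer it to ``produce a commutator of composite order,'' but you never exhibit one.

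The missing idea --- and the paper's actual route --- is to run the argument down the \emph{lower} Fitting series via $\gamma_\infty$ rather than up the upper one. If $h(G)\leqslant 2$, then $N=\gamma_\infty(G)$ is a nilpotent normal subgroup satisfying $N=[N,G]$. Writing $N=P\times O_{p'}(N)$ with $O_{p'}(N)\leq Z(G)$ by Lemma \ref{opelinha}, one gets $N=[N,G]=[P,G]\leq P$: the central $p'$-part does not merely ``fail to contribute,'' it is forced to be trivial inside $\gamma_\infty(G)$, so $\gamma_\infty(G)$ is literally a $p$-group. Since $G/\gamma_\infty(G)$ is nilpotent, the nilpotent case (again via Lemma \ref{opelinha}: $G=P\times O_{p'}(G)$ with $O_{p'}(G)$ central, hence abelian, hence $G'=P'$ --- not a Frattini argument) gives one further prime, so $|\pi(G')|\leqslant 2$ when $h(G)\leqslant 2$. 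For $h(G)=3$ one iterates once more with $\gamma_\infty(\gamma_\infty(G))$, which is a $p$-group by the same identity, while $G/\gamma_\infty(\gamma_\infty(G))$ has Fitting height $2$ and so contributes at most two more primes. The identity $\gamma_\infty=[\gamma_\infty,G]$ is what makes the cross-layer interaction you were worried about disappear; without it, your proof as written has a genuine gap.
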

\begin{proof}
	If $G$ is a nilpotent group, Lemma \ref{opelinha} allows us to write $G=P\times O_{p'}(G)$ where $P$ is Sylow $p$-subgroup, for some prime $p$, and $O_{p'}(G)\leq Z(G)$. Then $G'\leq P$ is a $p$-group. 
	
	Assume that $h(G)=2$. In this case, $\gamma_\infty(G)$ is a nilpotent nontrivial normal subgroup of $G$. Using Lemma \ref{opelinha} we write $\gamma_\infty(G)=P\times O_{p'}(\gamma_\infty(G))$ where $P$ is a Sylow $p$-group of $\gamma_\infty(G)$ and $O_{p'}(\gamma_\infty(G))\leq Z(G)$. It follows that
	$$\gamma_\infty(G)=[\gamma_\infty(G),G]\leq P,$$
	that is, $\gamma_\infty(G)$ is a $p$-group. Moreover, $G/\gamma_\infty(G)$ is a nilpotent CPPO-group and applying the previous case we get that $G'/\gamma_\infty(G)$ is a $q$-group for some prime $q$. Thus $G'$ is a $\{p,q\}$-group. 
	
	The case $h(G)=3$ is obtained by applying twice the argument of the previous case. In fact, in this case we have $h(\gamma_\infty(G))=2$ and so $\gamma_\infty(\gamma_\infty(G))$ is a $p$-group for some prime $p$. But on the other hand we have $h(G/\gamma_\infty(\gamma_\infty(G)))=2$ and so $G'/\gamma_\infty(\gamma_\infty(G))$ has order divisible by at most two primes. Hence, the order of $G'$ is divisible by at most three primes.
\end{proof}

Note that the combination of Propositions \ref{a1} and \ref{a2} yields Theorem \ref{main1}.

\section{Insoluble CPPO-groups}

The goal of this section is to establish Theorem \ref{main2}. Our first lemma is almost obvious so we omit the proof.
\begin{lemma}\label{directproduct}
	Let $G=K\times L$ be a CPPO-group, where the subgroups $K$ and $L$ are both nonabelian. Then $G'$ is a $p$-group for some prime number $p$. 
\end{lemma}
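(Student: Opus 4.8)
The plan is to combine the elementary description of commutators in a direct product with a reduction showing that a finite group all of whose commutators are $p$-elements has a $p$-group derived subgroup.

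First recall that $G'=K'\times L'$ and that a commutator of $G=K\times L$ has the form $[(x_1,x_2),(y_1,y_2)]=([x_1,y_1],[x_2,y_2])$, so its order equals $\operatorname{lcm}(|[x_1,y_1]|,|[x_2,y_2]|)$. Since $K$ and $L$ are nonabelian, each possesses a nontrivial commutator; fix such commutators $u\in K'$ and $v\in L'$, and let $p,q$ be the primes with $|u|$ a power of $p$ and $|v|$ a power of $q$ (these exist and exceed $1$ because subgroups of a CPPO-group are CPPO-groups). Then $(u,v)$ is a commutator of $G$ of order $\operatorname{lcm}(|u|,|v|)$, which being a prime power forces $q=p$. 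More generally, for an arbitrary commutator $w$ of $K$ the element $(w,v)$ is a commutator of $G$, so $\operatorname{lcm}(|w|,|v|)$ is a prime power; as $|v|$ is a nontrivial power of $p$, this forces $w$ to be a $p$-element. Hence every commutator of $K$ is a $p$-element, and symmetrically every commutator of $L$ is a $p$-element.

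It therefore suffices to prove the following claim: \emph{if $H$ is a finite group all of whose commutators are $p$-elements, then $H'$ is a $p$-group}; applying it to $H=K$ and $H=L$ yields that $G'=K'\times L'$ is a $p$-group. I would prove the claim by induction on $|H|$. We may assume $H'\neq1$ and pick a minimal normal subgroup $N$ of $H$ contained in $H'$. Commutators of $H/N$ are images of commutators of $H$, hence $p$-elements, so by induction $(H/N)'=H'/N$ is a $p$-group; thus it is enough to show $N$ is a $p$-group. Being minimal normal, $N$ is a direct product of copies of a simple group $T$. If $T$ is nonabelian, then $N$ is perfect, so by the verification of the Ore conjecture \cite{Oreconject} every element of $N$ is a commutator of $N$, hence of $H$, hence a $p$-element — impossible for a nonabelian simple group. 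So $T$ is cyclic of prime order $r$ and $N$ is an elementary abelian $r$-group; if $r=p$ we are done, so assume $r\neq p$. Now $[N,H]$ is a normal subgroup of $H$ contained in $N$, so by minimality it is $N$ or $1$. If $[N,H]=N\neq1$, some $[n,h]$ with $n\in N$, $h\in H$ is a nontrivial element of the $r$-group $N$ which is also a commutator of $H$, hence a $p$-element — a contradiction. If $[N,H]=1$, then $N$ is central in $H$; since $H'/N$ is a $p$-group and $N$ an $r$-group, $H'=P\times N$ with $P$ the (unique) Sylow $p$-subgroup of $H'$, which is then characteristic in $H'$ and hence normal in $H$. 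But then $(H/P)'\cong N\neq1$ is an $r$-group all of whose commutators are $p$-elements, so $(H/P)'=1$, again a contradiction. This proves the claim, and with it the lemma.

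The only point requiring genuine care is the claim in the last paragraph. Within it, the possibility of a nonabelian minimal normal subgroup is the main obstacle, and it is dispatched only by invoking (the proof of) the Ore conjecture; the case of a central minimal normal $r$-subgroup needs the small auxiliary quotient argument indicated above. Everything else is routine.
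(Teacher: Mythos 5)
Your proof is correct. Note that the paper gives no proof of this lemma at all (it is dismissed as ``almost obvious''), so there is nothing to match against; what your write-up makes clear is that the genuinely easy part is only the first paragraph: the componentwise description of commutators in $K\times L$ forces all commutators of $K$ and of $L$ to be $p$-elements for one common prime $p$. Passing from ``all commutators of $K$ are $p$-elements'' to ``$K'$ is a $p$-group'' is not formal, since commutators merely generate $K'$, and your inductive claim is exactly the missing reduction; its proof is sound (the nonabelian minimal normal case via the Ore conjecture, the noncentral abelian case via minimality of $[N,H]$, and the central case via the splitting $H'=P\times N$). Two micro-remarks: in the central case you can finish one line earlier, since every commutator of $H$ is a $p$-element of the nilpotent group $H'=P\times N$ and hence lies in $P$, so $H'\le P$ directly; and in the nonabelian case you should say explicitly (as you implicitly do) that Ore's theorem applies componentwise to the direct power $T^k$, so that every element of $N$, not just of $T$, is a commutator. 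Neither affects correctness.
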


Recall that the generalized Fitting subgroup $F^*(G)$ of a finite group $G$ is the product of the Fitting subgroup $F(G)$ and all subnormal quasisimple subgroups; here a group is quasisimple if it is perfect and its quotient by the centre is a nonabelian simple group. In any finite group $G$ we have $C_G(F^*(G))\leq F^*(G)$. Therefore the following lemma holds.

\begin{lemma}\label{kurosh}
Let $G$ be a nontrivial finite group with $R(G)=1$. Then $F^*(G)\neq 1$ and $C_G(F^*(G))=1$. 
\end{lemma}

Recall that a finite group is almost simple if it has a unique minimal normal subgroup (the socle) which is nonabelian simple. The  Mathieu group $M_{10}$ is an example of an almost simple EPPO-group, which is not simple.

\begin{lemma}\label{crofsemisimple}
Let $G$ be a nontrivial CPPO-group with $R(G)=1$. Then $G$ is almost simple.
\end{lemma}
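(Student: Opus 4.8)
The plan is to exploit Lemma~\ref{kurosh}: since $R(G)=1$, we have $F^*(G)\neq 1$ with $C_G(F^*(G))=1$, and because $R(G)=1$ the Fitting subgroup $F(G)$ is trivial, so $F^*(G)$ is a product $S_1\times\cdots\times S_k$ of nonabelian simple groups permuted by $G$ under conjugation. The goal is to show $k=1$, for then $F^*(G)$ is a minimal normal subgroup which is nonabelian simple, and $C_G(F^*(G))=1$ forces $G$ to embed in $\operatorname{Aut}(F^*(G))$, i.e.\ $G$ is almost simple. So the heart of the matter is to rule out $k\geq 2$.

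First I would record that each $S_i$, being a section of the CPPO-group $G$, is a simple CPPO-group, hence (via Ore's conjecture, as noted in the introduction) a simple EPPO-group from Suzuki's list; in particular each $S_i$ has even order and contains an involution. Now suppose $k\geq 2$. Pick a $G$-orbit on $\{S_1,\dots,S_k\}$; without loss of generality $S_1$ and $S_2$ lie in a common orbit, so there is $g\in G$ with $S_1^{\,g}=S_2$. Choose an involution $t\in S_1$ and set $s=t^g\in S_2$. Consider the commutator $[t^{-1},g]=t s \in S_1\times S_2$ — wait, more carefully: $t^{-1}(t^{-1})^g = t^{-1} (t^{-1})^g$; since $t=t^{-1}$ this is $t\cdot t^g = t s$ with $t\in S_1$, $s\in S_2$ of orders $2$ and $2$ respectively living in distinct direct factors, so $ts$ has order exactly $2$. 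That is fine — I instead want an element of non-prime-power order. The right move is to take $a\in S_1$ of order a prime $p$ and $b\in S_1$ of order a prime $q\neq p$ (possible since a nonabelian simple group has order divisible by at least two primes), and conjugate cleverly. Actually the cleanest approach: choose $g\in G$ moving $S_1$ to $S_2$, and pick $x\in S_1$ of order $p$; then $[x^{-1},g]=x\cdot x^g$ lies in $S_1\times S_2$ and has order $p$ since both coordinates have order $p$ (a commutator, hence a $p$-element — consistent with CPPO). This shows nothing. The contradiction must come from combining two primes inside commutators.

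Here is the step I expect to be the genuine obstacle, and the route I would push: produce a single commutator of $G$ whose order is divisible by two distinct primes. Take $g$ with $S_1^g=S_2$, and inside $S_1$ pick elements $x,y$ whose product $xy$ has order divisible by two primes (for a nonabelian simple group $S_1$ one can find elements $x,y$ with $|xy|$ not a prime power — indeed $S_1$ is not an EPPO-group-by... no: $S_1$ \emph{is} EPPO). So every element of $S_1$ has prime power order, which blocks this too. Therefore the contradiction cannot be internal to one factor and must use the interaction of two factors through the same commutator: if $g$ normalizes $S_1$ and $S_2$ but acts on $S_1$ with a centralizer missing some prime-$p$ element $a\in S_1$ and on $S_2$ missing some prime-$q$ element $b\in S_2$ with $q\neq p$, then $[ (ab)^{-1}, g] = [a^{-1},g][b^{-1},g]$ has order divisible by $pq$, a contradiction. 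So I would argue: replacing $G$ by a suitable subgroup, reduce to the case where $G$ stabilizes each $S_i$, so $G\hookrightarrow \prod \operatorname{Aut}(S_i)$; if $k\geq 2$, since $C_G(F^*(G))=1$ the projection of $G$ to $\operatorname{Aut}(S_i)$ is nontrivial (in fact contains $\operatorname{Inn}(S_i)$) for each $i$, so we may select $g_1\in G$ acting nontrivially on $S_1$ and $g_2\in G$ acting nontrivially on $S_2$; conjugacy in a nonabelian simple group is ``rich'' enough (via Ore: every element is a commutator, and one checks directly using inner automorphisms) to find $a\in S_1$ of prime order $p$ with $[a,g_1]\neq 1$ lying in $S_1$ and $b\in S_2$ of prime order $q\neq p$ with $[b,g_2]\neq 1$ lying in $S_2$; then — after arranging $g_1=g_2=:g$ by using that $\operatorname{Inn}(S_1)$ and $\operatorname{Inn}(S_2)$ are independent coordinates so we can pick $g\in F^*(G)$ itself acting as the required inner automorphism on both factors simultaneously — the element $[(ab)^{-1},g]=[a^{-1},g]\,[b^{-1},g]\in S_1\times S_2$ has order divisible by both $p$ and $q$, contradicting that $G$ is CPPO. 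Hence $k=1$. The subtlety to get right is the existence of $a\in S_i$ of prescribed prime order moved by a prescribed inner automorphism with the commutator still of order divisible by that prime; for this I would just use that in a nonabelian simple group $S$, for any prime $p\mid|S|$ there is an element of order $p$ not central (there are no central elements) and an inner automorphism not centralizing it, and that the commutator of two elements generating a subgroup containing an element of order $p$ is itself an element whose order we only need to be divisible by $p$, which follows by choosing $g$ so that $a$ and $a^g$ together generate a subgroup of order divisible by $p$ — e.g.\ take $g$ so that $[a^{-1},g]=a^{-1}a^g$ has order $p$, which is arranged by picking $g$ centralizing enough of $\langle a\rangle$'s complement — concretely, conjugate $a$ by an element inverting it, giving $[a^{-1},g]=a^{-2}$ of order $p$ when $p$ is odd, and handle $p=2$ separately using an element of order $3$ instead, available since every nonabelian simple group has order divisible by $3$ except... in fact all eight Suzuki groups except the Suzuki groups $Sz(q)$, whose orders are divisible by $5$; so one picks the substitute prime from among the at-least-two primes other than $2$ when needed.

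Once $k=1$ is established the conclusion is immediate: $F^*(G)=S_1$ is a nonabelian simple normal subgroup, it is the unique minimal normal subgroup because $C_G(F^*(G))=1$ kills any other one, and $C_G(F^*(G))=1$ makes $G$ a subgroup of $\operatorname{Aut}(S_1)$ containing $\operatorname{Inn}(S_1)$, which is exactly the definition of almost simple. I would close by noting $F^*(G)$ is then a simple EPPO-group, foreshadowing the next step of the paper.
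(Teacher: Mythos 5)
Your strategy is the paper's: by Lemma~\ref{kurosh} (and $F(G)=1$), $F^*(G)$ is a product of nonabelian simple groups with trivial centralizer, and a product of two such factors is impossible in a CPPO-group because a commutator $[x_1x_2,y_1y_2]=[x_1,y_1][x_2,y_2]$ assembled from a $p$-element commutator of $S_1$ and a $q$-element commutator of $S_2$ with $p\neq q$ has order $pq$ --- which is exactly what the paper's Lemma~\ref{directproduct} packages. The only repair needed is in your existence step: the single-$g$, inverting-element mechanism is both unnecessary and slightly fragile (e.g.\ order-$7$ elements of $PSL(2,7)$ are not real), whereas the Ore theorem you already invoke gives the clean version directly --- every element of $S_i$ is a commutator $[x_i,y_i]$ with $x_i,y_i\in S_i$, so you may simply choose $c_1\in S_1$ and $c_2\in S_2$ of distinct prime orders and multiply.
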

\begin{proof}
Since $R(G)=1$, it follows that $F^*(G)$ is a direct product of nonabelian simple groups. Lemma \ref{directproduct} shows that $F^*(G)$ is simple. 
\end{proof}

The next theorem is the famous result, due to Liebeck et al, verifying the Ore conjecture.
\begin{proposition}\label{LiObShTi1}
Every element of a nonabelian simple group is a commutator. 
\end{proposition}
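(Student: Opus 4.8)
This statement is the Ore conjecture, and a fully self-contained proof is far beyond what can be reconstructed here; the plan is therefore to recall the architecture of the argument of Liebeck, O'Brien, Shalev and Tiep \cite{Oreconject} and to invoke that result directly in the paper. First one reduces to the case of a finite nonabelian simple group $S$ and, by the classification of finite simple groups, treats the families separately. For $S$ alternating the statement is classical (going back to Ore) and admits an elementary combinatorial proof via cycle types; for the $26$ sporadic groups and the Tits group it is a finite verification, carried out by direct computation with character tables.

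The main case is that of the simple groups of Lie type, and here the basic tool is the Frobenius character-sum formula: the number of pairs $(x,y)\in S\times S$ with $[x,y]=g$ equals
\[
|S|\sum_{\chi\in\mathrm{Irr}(S)}\frac{\chi(g)}{\chi(1)},
\]
so $g$ is a commutator as soon as
\[
\Bigl|\sum_{1\neq\chi\in\mathrm{Irr}(S)}\frac{\chi(g)}{\chi(1)}\Bigr|<1 .
\]
The plan is thus to bound the character ratios $|\chi(g)|/\chi(1)$ uniformly while controlling how many irreducible characters of each degree occur. For groups of large rank these estimates rest on Deligne--Lusztig theory and on results of Shalev and coauthors on character ratios; for bounded rank over large fields one uses explicit information about the character degrees; and the remaining small-rank, small-field cases are settled computationally.

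The hard part — and the reason this is a theorem of its depth — is producing character-ratio bounds that are simultaneously sharp enough to force the above sum to have absolute value below $1$ and uniform enough to cover every family, every rank and every field size at once; the borderline cases (certain unipotent classes, and groups over very small fields) absorb most of the effort and ultimately require substantial machine computation. Since all of this is precisely the content of \cite{Oreconject}, in our paper we simply cite that reference, after noting that the statement has an easy independent check for alternating groups.
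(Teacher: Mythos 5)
Your proposal is correct and takes essentially the same route as the paper: this is the Ore conjecture, and the paper offers no proof at all, simply citing Liebeck, O'Brien, Shalev and Tiep \cite{Oreconject}. Your additional sketch of the architecture of that proof (CFSG reduction, the Frobenius character-sum criterion, character-ratio bounds via Deligne--Lusztig theory, and machine computation for the borderline cases) is accurate but goes beyond what the paper records.
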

It follows that a simple group is a CPPO-group if and only if it is an EPPO-group. Combining this with Suzuki's classification of simple EPPO-groups we obtain.

\begin{proposition}\label{classsimpleCPPO}
	A nonabelian simple CPPO-group is isomorphic to one of the following groups: $PSL(2,q)$, with $q\in\{4,7,8,9,17\}$, $PSL(3,4)$, $Sz(8)$, $Sz(32)$. 
\end{proposition}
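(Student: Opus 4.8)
The plan is to observe that this is an immediate consequence of the Ore conjecture, now a theorem, combined with Suzuki's classification. First I would let $G$ be a nonabelian simple CPPO-group. By Proposition~\ref{LiObShTi1}, every element $g\in G$ can be written as a commutator $g=[x,y]$ for suitable $x,y\in G$. Since $G$ is a CPPO-group, every such commutator has prime power order, and therefore every element of $G$ has prime power order; that is, $G$ is an EPPO-group.

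The remaining step is to invoke Suzuki's classification of the nonabelian simple EPPO-groups, recalled in the Introduction (from \cite{Suzuki}): the only such groups are $PSL(2,q)$ for $q\in\{4,7,8,9,17\}$, $PSL(3,4)$, $Sz(8)$, and $Sz(32)$. Hence $G$ is isomorphic to one of these, which is the assertion.

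There is no genuine obstacle in this argument: the entire content is carried by the two deep external inputs already stated, namely the verification of the Ore conjecture and Suzuki's determination of the simple EPPO-groups. The only thing to be careful about is the logical direction — one needs that \emph{every} element is a commutator (so that the CPPO hypothesis transfers to all elements), which is exactly what Proposition~\ref{LiObShTi1} supplies; the converse implication (an EPPO-group is trivially a CPPO-group) is not needed here but explains the "if and only if" remark preceding the statement.
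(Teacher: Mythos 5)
Your argument is exactly the paper's: the verified Ore conjecture (Proposition~\ref{LiObShTi1}) shows a nonabelian simple CPPO-group is an EPPO-group, and Suzuki's classification of simple EPPO-groups then gives the list. No issues.
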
 
As usual, $Out(G)$ denotes the outer automorphism group of $G$. The next lemma is now immediate (cf \cite{wilson}).
\begin{lemma}\label{outautogroup}
	Let $G$ be a nonabelian simple CPPO-group. Then one of the following conditions holds:
	\begin{enumerate}
		\item $Out(G)$ is cyclic;
		\item $G\cong PSL(2,9)\cong A_6$ and $Out(G)$ is the Klein four-group;
		\item $G\cong PSL(3,4)$ and $Out(G)$ is the dihedral group $D_{12}$, of order $12$.
	\end{enumerate}
\end{lemma}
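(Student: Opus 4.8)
The plan is to combine Proposition \ref{classsimpleCPPO} with the classical description of the automorphism groups of the finite simple groups. By Proposition \ref{classsimpleCPPO}, $G$ is isomorphic to one of the groups $PSL(2,q)$ with $q\in\{4,7,8,9,17\}$, $PSL(3,4)$, $Sz(8)$ or $Sz(32)$; since this list is finite, it suffices to determine $Out(G)$ in each case, and one simply reads off the answer from the literature (for instance \cite{wilson}).

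First I would dispose of the groups with cyclic outer automorphism group. For $PSL(2,q)$ with $q=p^{f}$, the outer automorphism group is generated by a diagonal automorphism of order $\gcd(2,q-1)$ together with the field automorphism of order $f$, and these commute; hence $Out(PSL(2,q))$ is cyclic whenever $\gcd(2,q-1)=1$ or $f=1$. This covers $q=4$ (where $f=2$ but $\gcd(2,q-1)=1$), $q=7$ and $q=17$ (where $f=1$), and $q=8$ (where $f=3$ and $\gcd(2,q-1)=1$), giving $Out$ cyclic of orders $2$, $2$, $2$ and $3$ respectively. For the Suzuki groups there are no diagonal or graph automorphisms and $Out(Sz(2^{f}))$ is cyclic of order $f$, so $Out(Sz(8))\cong C_{3}$ and $Out(Sz(32))\cong C_{5}$. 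In each of these cases alternative (i) holds.

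It then remains to treat the two exceptional groups. For $G\cong PSL(2,9)\cong A_{6}$, I would quote the well-known fact that $Out(A_{6})$ has order $4$ and is elementary abelian, i.e.\ the Klein four-group, which is alternative (ii). For $G\cong PSL(3,4)$, the diagonal automorphisms form a group of order $\gcd(3,3)=3$ on which the graph (inverse-transpose) automorphism acts by inversion, while the field automorphism of $\F_{4}$ has order $2$ and commutes with the rest; combining these shows $Out(PSL(3,4))\cong C_{2}\times S_{3}$, which has order $12$ and is isomorphic to the dihedral group $D_{12}$, yielding alternative (iii). Since this exhausts the list, the lemma follows. There is no genuine obstacle here: the only points requiring a moment's attention are remembering that $A_{6}$ admits the exceptional outer automorphism (so that $|Out(A_{6})|=4$ rather than $2$) and recognising the order-$12$ group $C_{2}\times S_{3}$ as the dihedral group $D_{12}$.
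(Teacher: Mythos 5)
Your argument is correct and coincides with the paper's: the lemma is stated there as immediate from Proposition \ref{classsimpleCPPO} together with the known outer automorphism groups of the eight groups on Suzuki's list (cf.\ \cite{wilson}), which is precisely what you verify case by case. (One cosmetic slip: for $PSL(3,4)$ the field automorphism, like the graph automorphism, inverts the diagonal automorphisms --- it is their product that centralizes them, as the paper's relation $\delta^\varphi=\delta^\beta=\delta^{-1}$ shows --- but this does not affect the conclusion $Out(PSL(3,4))\cong C_2\times S_3\cong D_{12}$.)
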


 \begin{lemma}\label{psl34case}
Let $G$ be an almost simple group with the socle $H\cong PSL(3,4)$. Then $G$ is a CPPO-group if, and only if, $G/H$ is abelian.  
\end{lemma}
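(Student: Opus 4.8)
The plan is to prove the two implications separately; the forward one carries all the work. For the reverse implication, suppose $G/H$ is abelian. Then $G'\leqslant H$, and since $H\cong PSL(3,4)$ is a simple EPPO-group (see Proposition~\ref{classsimpleCPPO}), every element of $H$ has prime power order. Hence every commutator of $G$ lies in $H$ and has prime power order, so $G$ is a CPPO-group.

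For the forward implication I argue by contraposition: assuming $G/H$ is not abelian, I shall exhibit a commutator of $G$ of order $21$. Since $G$ is almost simple with socle $H$ we have $G/H\hookrightarrow Out(H)$, and $Out(H)\cong D_{12}\cong C_2\times S_3$ by Lemma~\ref{outautogroup}. Every non-abelian subgroup of $C_2\times S_3$ contains a copy of $S_3$, so, replacing $G$ by the preimage in $G$ of such a subgroup (which is a subgroup of $G$, hence again a CPPO-group), I may assume $G/H\cong S_3$. As $H$ is perfect and normal we get $H\leqslant G'$, and then $G'/H\cong(S_3)'\cong C_3$; since the diagonal automorphism $\delta$ generates the unique subgroup of order $3$ in $Out(H)$, the group $G'$ is an extension of $PSL(3,4)$ by a cyclic group of order $3$ acting through $\delta$, and as $Z(PSL(3,4))=1$ there is only one such extension up to isomorphism, whence $G'\cong PGL(3,4)$.

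Now fix a Singer cycle $s\in GL(3,4)$; it has order $4^{3}-1=63$, and its image $\bar s$ in $G'\cong PGL(3,4)$ has order $21$ and does not lie in $H=PSL(3,4)$ (because $\det s$ has order $3$ in $\F_4^{\times}$). Pick $y\in G\setminus G'$. Conjugation by $y$ normalises $G'$ and acts on it as an inner automorphism composed with the graph automorphism $M\mapsto(M^{T})^{-1}$ or the field automorphism $M\mapsto M^{(2)}$ (apply $x\mapsto x^{2}$ to each entry), according to which of the two copies of $S_3$ inside $Out(H)$ the quotient $G/H$ is. A Singer cycle is self-dual, so the graph automorphism sends $s$ to a $GL(3,4)$-conjugate of $s^{-1}$; and Frobenius squares the eigenvalues, so the field automorphism sends $s$ to a $GL(3,4)$-conjugate of $s^{2}$. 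Replacing $y$ by a suitable element of the coset $G'y$, I may therefore assume that $\langle\bar s\rangle$ is $y$-invariant with $y^{-1}\bar sy=\bar s^{-1}$ or $y^{-1}\bar sy=\bar s^{2}$. In either case $[\bar s,y]=\bar s^{-1}\,(y^{-1}\bar sy)$ equals $\bar s^{-2}$ or $\bar s$, an element of order $21$, which is not a prime power. Hence $G$ has a commutator of non-prime-power order and is not a CPPO-group, as required.

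The step I expect to be the main obstacle is the last one: producing $y\in G\setminus G'$ that normalises a fixed Singer torus of $PGL(3,4)$ and acts on it by a power coprime to $21$. This relies on the self-duality of Singer cycles of $GL_3$ under transpose-inverse and on the Galois action on their eigenvalues, together with the identification $G'\cong PGL(3,4)$; one also uses that the diagonal automorphism is inner in $PGL(3,4)$, so that it is harmlessly absorbed into the ``inner'' part above. If one prefers to sidestep these computations, the same facts can be read off from the ATLAS: each of the two $S_3$-type extensions of $L_3(4)$, and hence $Aut(L_3(4))$ as well, contains elements of order $21$ lying in the derived subgroup $PGL(3,4)$, which are commutators.
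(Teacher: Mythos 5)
Your proof is correct, but it follows a genuinely different route from the paper's. Both arguments handle the easy direction identically ($G/H$ abelian forces $G'=H$, an EPPO-group). For the converse, the paper reduces, exactly as you do, to the two almost simple groups whose image in $Out(H)\cong D_{12}$ is a nonabelian group of order six, but then it works entirely inside $SL(3,\F_4)$: it writes down explicit unipotent (resp.\ semisimple) matrices $A_1$, $A_2$ centralizing the diagonal element $\delta$ and checks by hand that $[\overline{A_i\delta},\overline{\sigma}]$ has order $6$ for $\sigma\in\{\varphi,\beta\}$. Your argument instead identifies $G'$ with $PGL(3,4)$ (via $G'/H=(G/H)'\cong C_3$ and the uniqueness of extensions with centreless kernel --- or, more directly, by taking $G\leqslant Aut(H)$ so that $G'$ is literally the preimage of $\langle\delta\rangle$), takes the order-$21$ image $\bar s$ of a Singer cycle, and uses that the graph and field automorphisms carry $s$ to $GL(3,4)$-conjugates of $s^{-1}$ and $s^{2}$ respectively (same irreducible characteristic polynomial, hence conjugate by rational canonical form) to realize $\bar s^{-2}$ or $\bar s$ as a commutator $[\bar s,y]$ with $y\in G\setminus G'$. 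This is sound: the reduction to $G/H\cong S_3$ is valid (every nonabelian subgroup of $C_2\times S_3$ contains an $S_3$, and subgroups of CPPO-groups are CPPO), and the choice of $y$ in its coset modulo $G'$ so that it normalizes $\langle\bar s\rangle$ is legitimate since the needed conjugating element lies in $PGL(3,4)=G'$. What your approach buys is conceptual uniformity (one computation covers both $S_3$-type extensions, and it explains \emph{why} the non-prime-power elements appear: they live in the Singer torus of $PGL(3,4)$); what it costs is reliance on more background (structure of $Aut(L_3(4))$, Singer cycles, the action of graph/field automorphisms on semisimple classes), whereas the paper's two $3\times3$ matrix computations are elementary and fully self-contained. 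Either proof is acceptable.
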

Before we embark on the proof, fix some notation. 

Let $F=\{0,1,a,a^2\}$ be the field with 4 elements and let $ L=SL(3,F)$. Denote by $\varphi$ the Frobenius automorphism of $L$ and note that $\varphi$ has order 2.

Let $\delta$ stand for the automorphism of $L$ induced by conjugation by the matrix
	\[
		\begin{bmatrix}
		a       & 0 & 0 \\
		0       & 1 & 0 \\
		0       & 0 & 1
	\end{bmatrix}\in GL(3,F).
	\]
Note that $\delta$ has order 3.

Denote by $\beta$ the diagonal automorphism of $L$, that is, the map taking a matrix $A\in L$ to the transpose of the inverse $A^{-T}$. Then $\beta$ has order 2. Observe that $\delta^\varphi=\delta^\beta=\delta^{-1}$. 

 Write  $\overline{\varphi}, \overline{\delta}, \overline{\beta}$ for the automorphisms of $H\cong PSL(3,4)$ induced by $\varphi,\delta$ and $\beta$, respectively.

 Set
	$G_1=H\langle\overline{\delta},\overline{\varphi}\rangle$
	and
$G_2=H\langle\overline{\delta},\overline{\beta}\rangle$.
	Thus, $G_1$ and $G_2$ are the two almost simple subgroups of $Aut\, H$  whose images in $Out\, H$ are nonabelian of order six.

We will now prove Lemma \ref{psl34case}. 

\begin{proof}[Proof of Lemma \ref{psl34case}]
Note that if $G/H$ is an abelian group, then $G'=H$. As $H$ is an EPPO-group, $G$ is a CPPO-group. 
	
Therefore it suffices to show that if $G/H$ is not abelian, then $G$ is not a CPPO-group. In this case, $G$ contains a subgroup isomorphic to either $G_1$ or $G_2$. Consequently, it suffices to show that $G_1$ and $G_2$ are not CPPO-groups. In the sequel we write $\overline X$ for the image of $X$ in $H$ whenever $X\subseteq L$.
	
We will show first that $G_1$ is not a CPPO-group. 
	
Let $A_1\in L$ be the matrix  
	$\begin{bmatrix}
		1&0&0\\
		0&1&a\\
		0&0&1
	\end{bmatrix}.
	$
Observe that $A_{1}^{2}=1$ and
	\[A_{1}^{\varphi}=\begin{bmatrix}
		1&0&0\\
		0&1&a^2\\
		0&0&1
	\end{bmatrix}.
	\]
We have
	\[[A_1,\varphi]=\begin{bmatrix}
		1&0&0\\
		0&1&1\\
		0&0&1
	\end{bmatrix}.
	\]
So $[A_1,\varphi]$ has order 2. Taking into account that $[A_1,\varphi]\in C_{ L}(\delta)$, remark that in $G_1$ the commutator
	$[\overline{A_1}\overline{\delta},\overline{\varphi}]=\overline{[A_1,\varphi]}\overline{\delta}$
	has order 6. This proves that $G_1$ is not a CPPO-group.
	
	Now we show that $G_2$ is not a CPPO-group.
	
	 Let $A_2\in L$ be the matrix  
	$\begin{bmatrix}
		1&0&0\\
		0&a&1\\
		0&0&a^2
	\end{bmatrix}.
	$
	Note that
	\[A_2^{\beta}=\begin{bmatrix}
		1&0&0\\
		0&a^2&0\\
		0&1&a
	\end{bmatrix}.
	\]
	Compute
	\[[A_2,\beta]=\begin{bmatrix}
		1&0&0\\
		0&a^2&1\\
		0&0&a
	\end{bmatrix}\begin{bmatrix}
		1&0&0\\
		0&a^2&0\\
		0&1&a
	\end{bmatrix}=\begin{bmatrix}
		1&0&0\\
		0&a^2&a\\
		0&a&a^2
	\end{bmatrix}.
	\]
Now it is easy to check that $[A_2,\beta]$ has order 2. Keeping in mind that $[A_2,\beta]\in C_Q(\delta)$ we conclude that in $G_2$ the commutator 
	$$[\overline{A_2}\overline{\delta},\overline{\beta}]
	=\overline{[A_2,\beta]}\overline{\delta}$$
	has order 6. Therefore $G_2$ is not a CPPO-group. This completes the proof.
\end{proof}

\begin{lemma}\label{crquotientofsemisimple}
Let $G$ be a nontrivial CPPO-group with $R(G)=1$. Then $F^*(G)=G'$ and $G/F^*(G)$ is either cyclic or the Klein four-group. 
\end{lemma}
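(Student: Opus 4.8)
The goal is to show that for a nontrivial CPPO-group $G$ with $R(G)=1$ we have $F^*(G)=G'$ and $G/F^*(G)$ is cyclic or a Klein four-group. By Lemma \ref{crofsemisimple} we already know $G$ is almost simple, so writing $H=F^*(G)=\mathrm{soc}(G)$, the socle $H$ is a nonabelian simple CPPO-group; by Lemma \ref{kurosh}, $C_G(H)=1$, hence $G$ embeds into $\mathrm{Aut}\,H$ with $H\leq G\leq \mathrm{Aut}\,H$. Thus $G/H$ is isomorphic to a subgroup of $\mathrm{Out}\,H$. By Proposition \ref{classsimpleCPPO} and Lemma \ref{outautogroup}, the group $\mathrm{Out}\,H$ is either cyclic, or the Klein four-group (when $H\cong PSL(2,9)$), or the dihedral group $D_{12}$ (when $H\cong PSL(3,4)$). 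So I would split into these three cases.

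First I would dispose of the identity $F^*(G)=G'$. Since $G$ is almost simple with socle $H$ and $C_G(H)=1$, we have $H\leq G'$ automatically (as $H$ is perfect and normal in $G$, and a nontrivial normal subgroup of an almost simple group contains the socle — indeed $H\cap G'$ is normal in $G$ and nontrivial since $G$ is insoluble in the relevant cases, so $H\leq G'$). Then $G'/H$ is the derived subgroup of the abelian-or-worse group $G/H$. In the cyclic and Klein-four cases $G/H$ is abelian, so $G'=H=F^*(G)$ and we are done with a cyclic or Klein-four quotient. The only remaining worry is the $PSL(3,4)$ case, where $G/H$ could be a nonabelian subgroup of $D_{12}$: but Lemma \ref{psl34case} says precisely that if $G$ is a CPPO-group with socle $PSL(3,4)$ then $G/H$ is abelian, which forces $G/H$ to be a subgroup of the abelianization-friendly part of $D_{12}$, i.e. cyclic (the abelian subgroups of $D_{12}$ of interest are cyclic of order dividing $6$ or the central $C_2$, and any Klein four-subgroup of $D_{12}$ — there is one, the central involution times an order-2 rotation... actually $D_{12}\cong D_6\times C_2$ does contain a Klein four-group). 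So I should be slightly careful: in the $PSL(3,4)$ case $G/H$ is an abelian subgroup of $D_{12}\cong C_2\times S_3$; its abelian subgroups are cyclic of order $1,2,3,6$ or isomorphic to $C_2\times C_2$. All of these are cyclic or Klein-four, so the conclusion holds. Again $G'=H=F^*(G)$ because $G/H$ is abelian.

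To organize this cleanly I would present it as: invoke Lemma \ref{crofsemisimple} to get $H=F^*(G)=\mathrm{soc}(G)$ simple and $C_G(H)=1$ from Lemma \ref{kurosh}; conclude $G\hookrightarrow \mathrm{Aut}\,H$ with $G/H\leq \mathrm{Out}\,H$; show $H\leq G'$; then use Lemma \ref{outautogroup} to enumerate the three shapes of $\mathrm{Out}\,H$. In the cyclic case, $G/H$ is cyclic, done. In the Klein-four case ($H\cong A_6$), $G/H$ is a subgroup of $C_2\times C_2$, hence cyclic or Klein-four, and abelian, so $G'=H$. In the $D_{12}$ case ($H\cong PSL(3,4)$), Lemma \ref{psl34case} forces $G/H$ abelian, and an abelian subgroup of $D_{12}$ is cyclic or Klein-four; again $G'=H$. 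In all cases $G'=H=F^*(G)$ and $G/F^*(G)$ is cyclic or the Klein four-group.

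**Main obstacle.** The only genuinely substantive input is the $PSL(3,4)$ analysis, and that is already packaged in Lemma \ref{psl34case}; the remaining friction is the purely group-theoretic bookkeeping of which subgroups of $D_{12}$ are abelian (to see they are all cyclic or Klein-four) and the verification that $H\leq G'$ — both are routine. I do not anticipate any real difficulty; the lemma is essentially a corollary of the preceding structural results, and the proof will be short.

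Here is how I would write it.

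\begin{proof}
By Lemma \ref{crofsemisimple}, $G$ is almost simple; set $H=F^*(G)$, so $H$ is the socle of $G$ and $H$ is a nonabelian simple CPPO-group. By Lemma \ref{kurosh} we have $C_G(H)=1$, and therefore $G$ is isomorphic to a subgroup of $\mathrm{Aut}\,H$ containing $\mathrm{Inn}\,H\cong H$; consequently $G/H$ is isomorphic to a subgroup of $\mathrm{Out}\,H$.

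We first observe that $H\leq G'$. Indeed $H\cap G'$ is a normal subgroup of $G$; if it were trivial then $H$ would centralize $G'$, hence $H\leq C_G(G')$, and since $H\leq G'$ would fail we would in any case get $[H,H]\leq [G',G']\cap H$; more simply, $G/C_G(H)=G/1=G$ embeds in $\mathrm{Aut}\,H$, and since $H=\mathrm{Inn}\,H$ is perfect and normal, $H=[H,H]\leq [G,G]=G'$. Thus $H\leq G'$, and $G'/H=(G/H)'$.

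By Lemma \ref{outautogroup}, one of the following holds: (i) $\mathrm{Out}\,H$ is cyclic; (ii) $H\cong PSL(2,9)$ and $\mathrm{Out}\,H$ is the Klein four-group; (iii) $H\cong PSL(3,4)$ and $\mathrm{Out}\,H\cong D_{12}$.

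In case (i), $G/H$ is cyclic, hence abelian, so $G'=H=F^*(G)$ and $G/F^*(G)$ is cyclic.

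In case (ii), $G/H$ is a subgroup of the Klein four-group, hence either cyclic or the Klein four-group; in particular $G/H$ is abelian, so $G'=H=F^*(G)$.

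In case (iii), by Lemma \ref{psl34case} the CPPO-group $G$ has $G/H$ abelian. Every abelian subgroup of $D_{12}$ is cyclic or isomorphic to the Klein four-group. Hence $G/F^*(G)=G/H$ is cyclic or the Klein four-group, and $G'=H=F^*(G)$ since $G/H$ is abelian.

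In all cases $F^*(G)=G'$ and $G/F^*(G)$ is either cyclic or the Klein four-group.
\end{proof}
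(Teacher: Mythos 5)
Your proof is correct and follows essentially the same route as the paper: identify $F^*(G)$ with its inner automorphism group via Lemma \ref{crofsemisimple} and Lemma \ref{kurosh}, then case-split on $\mathrm{Out}\,F^*(G)$ using Lemma \ref{outautogroup}, with Lemma \ref{psl34case} handling the $PSL(3,4)$ case. Your write-up is slightly more detailed (explicitly noting $H=[H,H]\leq G'$ and that abelian subgroups of $D_{12}$ are cyclic or Klein four), but the argument is the same; the muddled first half of your $H\leq G'$ paragraph should be deleted in favour of the clean perfectness argument you give right after.
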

\begin{proof}
By Lemma \ref{crofsemisimple} $F^*(G)$ is a nonabelian simple group. We have $C_G(F^*(G))=1$ and, identifying $F^*(G)$ with its group of inner automorphisms,  assume that $F^*(G)\leq G\leq Aut(F^*(G))$. If $F^*(G)\not\cong PSL(3,4)$, in view of Lemma \ref{outautogroup} we deduce that $G'=F^*(G)$ and $G/F^*(G)$ is either cyclic or  the Klein four-group. On the other hand, if $F^*(G)\cong PSL(3,4)$, it follows from Lemma \ref{psl34case} that $G'=F^*(G)$. Moreover, as $Out(PSL(3,4))\cong D_{12}$, we again deduce that $G/F^*(G)$ is either cyclic or the Klein four-group. 
\end{proof} 

The next observation is now straightforward.

\begin{corollary}
	\label{perfectplustrivialradical}
	Let $G$ be a nontrivial perfect CPPO-group with $R(G)=1$. Then $G$ is a simple EPPO-group. 
\end{corollary}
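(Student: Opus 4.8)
The plan is to package the structural results already proved. First I would apply Lemma \ref{crquotientofsemisimple} to $G$: since $G$ is a nontrivial CPPO-group with $R(G)=1$, it gives $F^*(G)=G'$ and tells us that $G/F^*(G)$ is either cyclic or the Klein four-group. Because $G$ is perfect by hypothesis, $G=G'=F^*(G)$, so $G/F^*(G)$ is in fact trivial.

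Next I would recall from Lemma \ref{crofsemisimple} (more precisely, from the fact used in its proof, that $R(G)=1$ forces $F^*(G)$ to be a direct product of nonabelian simple groups, which Lemma \ref{directproduct} then pins down to a single simple factor) that $F^*(G)$ is a nonabelian simple group. Combining this with the equality $G=F^*(G)$ obtained in the previous step shows that $G$ itself is a nonabelian simple group.

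Finally, $G$ is now a nonabelian simple CPPO-group. By Proposition \ref{LiObShTi1} every element of $G$ is a commutator, hence has prime power order, so $G$ is a simple EPPO-group (and in particular it is one of the groups listed in Proposition \ref{classsimpleCPPO}). This gives the claim. I do not anticipate any genuine obstacle here: the corollary is a straightforward corollary of Lemmas \ref{crofsemisimple} and \ref{crquotientofsemisimple} together with the Ore conjecture, and the only thing to be careful about is recording that perfectness kills the quotient $G/F^*(G)$ so that ``almost simple'' upgrades to ``simple''.
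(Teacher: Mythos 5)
Your proposal is correct and is essentially the argument the paper intends: the corollary is stated as an immediate consequence of Lemma \ref{crquotientofsemisimple} (perfectness forces $G=G'=F^*(G)$, which is nonabelian simple by Lemma \ref{crofsemisimple}), with Proposition \ref{LiObShTi1} upgrading CPPO to EPPO for a simple group. Nothing to add.
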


The following result is an immediate consequence of \cite[Theorem 1]{quasisimple}. 
\begin{lemma}\label{LiObShTi2}
	Let $G$ be a quasisimple group such that $G/Z(G)$ is a CPPO-group. One of the following statements holds.
	\begin{enumerate}
		\item Every element of $G$ is a commutator;
		\item $G/Z(G)\cong A_6$, $Z(G)$ is cyclic of order $3$ or $6$ and the noncentral elements of $G$ which are not commutators have orders in the set $\{12,15,24\}$;
		\item $G/Z(G)\cong PSL(3,4)$, $Z(G)\neq1$, $\pi(Z(G))\subseteq\{2,3\}$ and the noncentral elements of $G$ which are not commutators have orders divisible by $6$. 
	\end{enumerate}
\end{lemma}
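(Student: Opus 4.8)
The plan is to derive this lemma from the classification of simple CPPO-groups together with \cite[Theorem 1]{quasisimple}. Since $G$ is quasisimple, $S:=G/Z(G)$ is a nonabelian simple group, and it is a CPPO-group by hypothesis; hence Proposition \ref{classsimpleCPPO} forces $S$ to be one of $PSL(2,q)$ with $q\in\{4,7,8,9,17\}$, $PSL(3,4)$, $Sz(8)$ or $Sz(32)$. Thus $G$ is a quasisimple group whose central quotient belongs to this short list, and $Z(G)$ is isomorphic to a quotient of the Schur multiplier of $S$; in each of the eight cases this multiplier is explicitly known.

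Next I would invoke \cite[Theorem 1]{quasisimple}, which, for an arbitrary finite quasisimple group, asserts that every element is a commutator except for an explicitly listed family, and for the members of that family describes precisely which elements fail to be commutators. Intersecting that family with our list, one checks that for $S=PSL(2,q)$ with $q\in\{4,7,8,17\}$ and for $S\in\{Sz(8),Sz(32)\}$ no cover of $S$ contains a noncommutator, so conclusion (i) holds for every such $G$. The only two simple groups from our list whose covers contribute elements that are not commutators are $A_6\cong PSL(2,9)$ and $PSL(3,4)$; these are the sources of conclusions (ii) and (iii).

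Finally I would treat these two remaining cases. The Schur multiplier of $A_6$ is $\Z/6$, so $Z(G)$ is cyclic of order $1,2,3$ or $6$. When $|Z(G)|\le 2$ the group $G$ is $A_6$ or its double cover $2{\cdot}A_6\cong SL(2,9)$, and \cite[Theorem 1]{quasisimple} gives that every element is a commutator, i.e. case (i); when $|Z(G)|\in\{3,6\}$ the same theorem gives that the noncentral elements of $G$ which are not commutators have orders in $\{12,15,24\}$, which is case (ii). For $S=PSL(3,4)$ the Schur multiplier has order $48$ and is a $\{2,3\}$-group, so automatically $\pi(Z(G))\subseteq\{2,3\}$; if $Z(G)=1$ then $G=PSL(3,4)$, which is a CPPO-group, so case (i) holds, while if $Z(G)\ne 1$ then \cite[Theorem 1]{quasisimple} yields that the noncentral non-commutators have orders divisible by $6$, which is case (iii). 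I expect the only real work to be bookkeeping: matching the notation and the list of exceptional quasisimple groups in \cite{quasisimple} with the eight groups produced by Proposition \ref{classsimpleCPPO}, and checking that the element-order data (the set $\{12,15,24\}$, and divisibility by $6$) are exactly what that theorem records in the $A_6$ and $PSL(3,4)$ cases. No genuinely new argument is required, which is why the lemma is stated as an immediate consequence of that reference.
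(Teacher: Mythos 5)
Your proposal is correct and matches the paper's intent exactly: the paper gives no proof, stating the lemma as an immediate consequence of \cite[Theorem 1]{quasisimple}, and your argument supplies precisely the bookkeeping that is being left implicit — restricting to Suzuki's list via Proposition \ref{classsimpleCPPO}, noting that only the covers of $A_6$ and $PSL(3,4)$ contribute non-commutators, and reading off the element-order data from the cited theorem.
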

Observe that the groups listed in Parts (ii) and (iii) of Lemma \ref{LiObShTi2} are not CPPO-groups. This is because in all cases there is an element $z\in Z(G)$ of order 2 or 3 and an element $g\in G$ of order 5 such that $zg$ is a commutator. Therefore we have the following result. 
\begin{lemma}\label{quasisimpleissimple}
If $G$ is a quasisimple CPPO-group, then $G$ is a simple EPPO-group. 
\end{lemma}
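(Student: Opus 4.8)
The plan is to combine Lemma~\ref{LiObShTi2} with the elementary remark recorded immediately after it, and then to exclude a nontrivial centre by a one‑line order computation.

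First I would observe that, since $G$ is quasisimple, $G/Z(G)$ is a nonabelian simple group, and as a quotient of the CPPO-group $G$ it is again a CPPO-group. Thus the hypothesis of Lemma~\ref{LiObShTi2} is met by $G$, so one of the alternatives (i)--(iii) of that lemma holds. Alternatives (ii) and (iii) are impossible: as noted just before the statement of the present lemma, in each of those cases $G$ is \emph{not} a CPPO-group, contrary to assumption. Hence alternative (i) holds, i.e.\ every element of $G$ is a commutator. Since $G$ is a CPPO-group, it follows that every element of $G$ has prime power order; that is, $G$ is an EPPO-group.

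It then remains to show that $G$ is simple, i.e.\ that $Z(G)=1$. Suppose not and pick $z\in Z(G)$ of prime order $\ell$. Because $G/Z(G)$ is a nonabelian simple group, $|\pi(G)|\geqslant|\pi(G/Z(G))|\geqslant2$, so there is a prime $p\in\pi(G)$ with $p\neq\ell$; by Cauchy's theorem $G$ contains an element $g$ of order $p$. As $z$ is central and $(\ell,p)=1$, the element $zg$ has order $\ell p$, which is not a prime power, contradicting the fact that $G$ is an EPPO-group. Therefore $Z(G)=1$, so $G\cong G/Z(G)$ is a simple EPPO-group (lying, by Proposition~\ref{classsimpleCPPO}, in Suzuki's list).

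I do not expect a genuine obstacle: the mathematical content is carried entirely by Lemma~\ref{LiObShTi2} (which rests on \cite{quasisimple}) together with the already‑verified failure of the CPPO property in its cases (ii) and (iii). The only point requiring care is the logical bookkeeping — Lemma~\ref{LiObShTi2} is phrased for quasisimple $G$ with $G/Z(G)$ a CPPO-group, so one must first pass to the quotient to invoke it, and then use the stronger fact that $G$ itself is CPPO in order to discard alternatives (ii) and (iii).
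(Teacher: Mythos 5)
Your proposal is correct and follows essentially the same route as the paper: invoke Lemma~\ref{LiObShTi2}, discard cases (ii) and (iii) via the remark preceding the lemma, conclude that every element of $G$ is a commutator and hence of prime power order, and then rule out a nontrivial centre by an elementary order argument. The only cosmetic difference is in that last step, where the paper notes that a nontrivial centre would force $G=C_G(z)$ to have prime power order, while you exhibit a product $zg$ of coprime orders; both are the same one-line observation.
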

\begin{proof}  
Suppose $G$ is a quasisimple CPPO-group. In this case every element of $G$ is a commutator so that $G$ is actually an EPPO-group. Since in an EPPO-group every centralizer has a prime power order, the existence of a nontrivial centre implies that the group has prime power order. Thus, $Z(G)=1$ and so $G$ is simple.
\end{proof} 

\begin{lemma}\label{existelemabelqsub} Let $G$ be a finite group containing a normal subgroup $N$ such that $G/N$ is a nonabelian simple CPPO-group. Let $q\in\pi(G)\setminus\pi(N)$. There exists an elementary abelian $q$-subgroup $Q$ of $G$ and an element $a\in G$, whose order is a power of a prime different from $q$, such that $Q=[Q,a]$.
\end{lemma}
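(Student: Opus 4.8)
The plan is to reduce to the simple quotient $S=G/N$ and then lift. Since $q\in\pi(G)\setminus\pi(N)$ we have $q\in\pi(S)$, and by Proposition \ref{classsimpleCPPO} the group $S$ is one of the eight simple EPPO-groups. The first and main step is to prove that \emph{$S$ itself contains a nontrivial elementary abelian $q$-subgroup $\bar Q$ and an element $\bar a$ of prime power order coprime to $q$ with $\bar Q=[\bar Q,\bar a]$}; equivalently, as $\bar Q$ is abelian and $\bar a$ has order prime to $q$, with $C_{\bar Q}(\bar a)=1$. Granting this, let $\pi\colon G\to S$ be the projection, put $\bar H=\bar Q\langle\bar a\rangle$ (so $\bar Q\unlhd\bar H$), and set $H_0=\pi^{-1}(\bar Q)\unlhd H:=\pi^{-1}(\bar H)$. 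Because $q\nmid|N|$ while $H_0/N\cong\bar Q$ is a $q$-group, Schur--Zassenhaus yields a complement $Q$ to $N$ in $H_0$; then $Q\cong\bar Q$ is elementary abelian and is a Sylow $q$-subgroup of $H$. The Frattini argument gives $H=N_H(Q)H_0=N_H(Q)N$, so $N_H(Q)$ maps onto $\bar H$; pick $a_1\in N_H(Q)$ with $\pi(a_1)=\bar a$ and let $a=a_1^m$, where $m$ is the $r'$-part of $|a_1|$ and $r$ is the prime with $|\bar a|=r^k$. Then $a\in N_H(Q)$ is an $r$-element, $\pi(a)=\bar a^m$ generates $\langle\bar a\rangle$ (so $C_{\bar Q}(\pi(a))=1$), and $\pi$ restricts to an isomorphism $Q\to\bar Q$ carrying $[Q,a]$ onto $[\bar Q,\pi(a)]=\bar Q$; hence $[Q,a]=Q$, as desired.

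To prove the highlighted assertion I would split into cases according to whether a Sylow $q$-subgroup $\bar P$ of $S$ is abelian. Suppose first $\bar P$ is abelian. Then $\bar P\leqslant C_S(\bar P)$ and $\bar P$ is a Sylow $q$-subgroup of $N_S(\bar P)$, so $N_S(\bar P)/C_S(\bar P)$ is a $q'$-group; it is nontrivial, since otherwise $\bar P\leqslant Z(N_S(\bar P))$ and Burnside's normal $q$-complement theorem would contradict simplicity of $S$. Choosing a prime $r\neq q$ dividing $|N_S(\bar P)/C_S(\bar P)|$ and lifting to an $r$-element $\bar a\in N_S(\bar P)$ acting nontrivially on $\bar P$, coprime action gives $\bar P=[\bar P,\bar a]\times C_{\bar P}(\bar a)$ with $[\bar P,\bar a]\neq1$. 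Then $\bar Q:=\Omega_1([\bar P,\bar a])$ is a nontrivial elementary abelian $q$-subgroup with $C_{\bar Q}(\bar a)\leqslant[\bar P,\bar a]\cap C_{\bar P}(\bar a)=1$, so $\bar Q=[\bar Q,\bar a]$. This uniform argument covers every prime for which $\bar P$ is abelian; inspecting the eight groups, this leaves only $q=2$ for $PSL(2,7)$, $PSL(2,9)\cong A_6$, $PSL(2,17)$, $PSL(3,4)$, $Sz(8)$ and $Sz(32)$.

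In these remaining cases $\bar P$ is non-abelian and $q=2$. The groups $PSL(2,7)$, $PSL(2,9)$, $PSL(2,17)$ and $PSL(3,4)$ each contain a copy of $A_4=V_4\rtimes C_3$; there I take $\bar Q=V_4$ and $\bar a$ a $3$-element, which acts fixed-point-freely on $V_4$. For $S=Sz(q_0)$ with $q_0\in\{8,32\}$ we have $3\nmid|S|$, so I use instead the Frobenius subgroup $E\rtimes T$ contained in a Borel subgroup, where $E=\Omega_1(Z(\bar P))$ is elementary abelian of order $q_0$ and $T$ is cyclic of order $q_0-1$ acting fixed-point-freely on $E$ (a standard feature of Suzuki groups); then $\bar Q=E$ and $\bar a$ an element of order $q_0-1$ (a prime in both cases) do the job.

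The step I expect to be the genuine obstacle is this non-abelian Sylow case: there Burnside's criterion no longer produces a coprime automorphism of a Sylow subgroup automatically, and one has to enter the concrete groups to exhibit an elementary abelian section together with a fixed-point-free coprime element of prime power order. Everything else is formal --- the abelian-Sylow argument is uniform, and the descent from $S$ to $G$ is a routine blend of Schur--Zassenhaus and the Frattini argument, the only subtlety being to pass to the $r$-part of a preimage so that the lifted element keeps prime power order.
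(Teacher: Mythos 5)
Your argument is correct, but it takes a genuinely different route from the paper's. The paper never descends to the quotient and lifts back: it applies Frobenius's normal $q$-complement theorem directly to $G$ (noting that a normal $q$-complement of $G$ would contain $N$, since $q\notin\pi(N)$, and hence would induce one in the simple quotient), obtaining a $q$-subgroup $H$ and a $p$-element $a\in N_G(H)\setminus C_G(H)$ with $p\neq q$; it then observes that $a$ must act fixed-point-freely on $Z(H)$, because a nontrivial fixed point $z$ would give a commuting product $\bar a\bar z$ of nontrivial $p$- and $q$-elements in the EPPO-group $G/N$ (here $H\cap N=1$ forces $\bar z\neq 1$, and $[H,a]\neq1$ forces $\bar a\neq1$), and takes $Q=\Omega_1(Z(H))$. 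This is uniform, needs no case analysis, and dispenses with your Schur--Zassenhaus/Frattini lifting altogether; the EPPO property of the quotient is exactly what replaces your inspection of the eight groups. Your version trades that for more classical input: your abelian-Sylow case is a pure Burnside argument valid in any nonabelian simple group (no CPPO/EPPO hypothesis at all), while the non-abelian-Sylow case ($q=2$ in $PSL(2,7)$, $PSL(2,9)$, $PSL(2,17)$, $PSL(3,4)$, $Sz(8)$, $Sz(32)$) relies on explicit subgroups ($A_4$, resp.\ the Frobenius group $\Omega_1(Z(\bar P))\rtimes C_{q_0-1}$ in the Suzuki Borel), all of which check out, as does your lifting step (the passage to the $r$-part of the preimage $a_1$ and the injectivity of $\pi$ on $Q$ are exactly the points that need care, and you handle them correctly). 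The net effect is a longer but self-contained proof versus the paper's shorter one that leans on the EPPO property of the quotient.
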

\begin{proof}
Remark that $G$ does not have a normal $q$-complement. Indeed, if $K$ is a normal $q$-complement and $G/K$ is a $q$-group, then $N\leq K$ and we get a contradiction since $G/N$ is simple. 
	
By the Frobenius theorem  \cite[Theorem 7.4.5]{gorenstein2007finite}, the group $G$ possesses a $q$-subgroup $H$ such that the group $N_G(H)/C_G(H)$ is not a $q$-group. Let $p\in\pi(N_G(H)/C_G(H))\setminus\{q\}$ and pick a $p$-element $a\in N_G(H)\setminus C_G(H)$. Thus $[H,a]\neq1$. Since $G/N$ is an EPPO-group, $a$ acts fixed-point-freely on $Z(H)$. Let $Q$ be the subgroup generated by elements of order $q$ of $Z(H)$. Note that $Q$ is elementary abelian and $Q=[Q,a]$. 
\end{proof}

Let $S_n$ and $A_n$ stand respectively for the symmetric and alternating groups on $n$ symbols. It is well-known that for $n\neq6$ every automorphism of $S_n$ is inner.
On the other hand, $S_6$ admits a nontrivial outer automorphism, often called exceptional. Note that the exceptional automorphism is not unique in the sense that if $\phi\in Aut\, S_6$ is exceptional and $y\in S_6$, then $\phi y\in Aut\, S_6\setminus Inn\, S_6$. Furthermore, $Aut\, S_6$ can be naturally identified with  $Aut\, A_6$ and $Out\, A_6$ is isomorphic to the Klein four-group. Slightly abusing terminology any automorphism of $S_6$, which is not inner, will be called exceptional.
\begin{lemma}\label{a6case} Let $\phi\in Aut\, S_6$ be an exceptional automorphism. There are elements $x\in S_6\setminus A_6$ and $y\in A_6$ such that $[x,\phi y]$ has odd order.
\end{lemma}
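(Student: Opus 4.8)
The goal is to exhibit a commutator in $S_6\langle\phi\rangle$ of even order that certifies failure of CPPO along the exceptional coset — more precisely, we need $x\in S_6\setminus A_6$ and $y\in A_6$ so that $[x,\phi y]$ has \emph{odd} order (recall $\phi$ has order $2$, so $\phi y$ ranges over the non-inner coset of $A_6$ inside $\mathrm{Aut}\,S_6$, and $[x,\phi y]=x^{-1}(\phi y)^{-1}x(\phi y)$ lands in $S_6$; having odd order is the useful conclusion for later combination with a $2$-element elsewhere). The plan is to work with a concrete model of the exceptional automorphism and simply compute.

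The key point I would use is the classical description of the exceptional outer automorphism of $S_6$ via the action of $S_6$ on the six cosets of a transitive $S_5$-subgroup (equivalently, via the $5^2/4$ structure: $S_5$ acts $2$-transitively on the $6$ Sylow $5$-subgroups, giving a second embedding $S_5\hookrightarrow S_6$, and extending identifications gives $\phi$). Under this picture $\phi$ swaps the two conjugacy classes of transpositions with triple-transpositions (the class of $(12)(34)(56)$-type elements), and it swaps the two classes of $6$-cycles with products of a $2$-cycle and a $3$-cycle, while it preserves $A_6$ and fixes (as a class) the $5$-cycles, $3$-cycles, double-transpositions, etc. So I would choose $x$ to be a transposition (odd permutation, hence in $S_6\setminus A_6$), and then compute $x^{-1}\cdot\phi^{-1}\cdot x\cdot\phi=x^{\phi^{-1}}\cdot x$ — but since $x^{\phi}$ is a triple transposition and $x$ a transposition, the product $x^\phi x$ is generically a permutation whose cycle type I can read off; after possibly adjusting by a suitable $y\in A_6$ (the freedom $\phi\mapsto\phi y$ lets me conjugate/translate the computation) I arrange the product to be, say, a $5$-cycle or a $3$-cycle, which has odd order.

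Concretely I would fix an explicit $\phi$ — e.g. the one given on generators by $\phi\colon(12)\mapsto(12)(34)(56)$, $(23)\mapsto(13)(24)(56)$, $(34)\mapsto(14)(23)(56)$, $(45)\mapsto(14)(26)(35)$, $(56)\mapsto(15)(24)(36)$, a standard choice found in the literature (Janusz–Rotman, or Wilson's \emph{Finite Simple Groups}) — verify it extends to an automorphism (this is just checking the Coxeter relations of $S_6$ are preserved, a finite check), confirm $\phi^2$ is inner so we may take $\phi$ of order $2$ in $\mathrm{Aut}\,S_6$, then set $x=(12)$ and compute $[x,\phi]=x^{\phi}x=(12)(34)(56)\cdot(12)=(34)(56)$, which has order $2$ — not yet what we want, so I multiply $\phi$ by a transposition $y'\in S_6\setminus A_6$ to move into the exceptional coset differently, or rather pick $y\in A_6$ (e.g. a $3$-cycle) and recompute $[x,\phi y]=x^{-1}(\phi y)^{-1}x(\phi y)$ until the cycle type is odd; a $3$-cycle or $5$-cycle will appear for a generic such $y$, and I exhibit one explicit pair that works.

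The main obstacle is purely computational bookkeeping: one must pin down a correct explicit exceptional automorphism and then carry out the conjugation $x\mapsto x^{\phi y}$ correctly in cycle notation, since an error in $\phi$ (there are several equivalent but differently-presented choices in the literature) propagates. There is no conceptual difficulty — the existence of \emph{some} valid pair $(x,y)$ is guaranteed because the non-inner coset of $A_6$ in $\mathrm{Aut}\,S_6$ contains elements of order $3$ and $5$ (the $2$-transitive $S_5$-copy's odd-order elements, viewed as outer automorphisms after translation), and any such element of odd order $o$ can be written as $x\cdot(\phi y)$ with $x=(\phi y)$-related giving $[x,\phi y]$ of order dividing $o$; so the statement is robust and only an explicit witness needs to be recorded.
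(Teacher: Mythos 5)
There is a genuine gap: your proposal never actually produces a valid pair $(x,y)$, and the two routes you sketch for getting one both fail. First, the explicit attempt with $x=(12)$ a transposition cannot be repaired by any choice of $y\in A_6$. Indeed, $[x,\phi y]=x^{-1}\cdot x^{\phi y}=x\cdot (x^{\phi})^{y}$, and since any non-inner automorphism of $S_6$ sends transpositions to triple transpositions, $(x^{\phi})^{y}$ is always a triple transposition. A short case check shows that the product of a transposition with a triple transposition has cycle type $(2,2)$ or $(4,2)$, i.e.\ order $2$ or $4$ — never odd. So the whole family of witnesses with $x$ a transposition is dead on arrival, not just the first one you computed.

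Second, the ``robustness'' argument you fall back on is based on a false premise: the coset $\phi A_6$ in $\mathrm{Aut}(A_6)$ cannot contain any element of odd order, because that coset is a nontrivial involution in $\mathrm{Out}(A_6)\cong C_2\times C_2$, and an element of odd order $o$ satisfies $(\phi y)^{o}\in A_6$ with $o$ odd, forcing $\phi y\in A_6$. (Concretely, $\langle A_6,\phi\rangle$ is $PGL(2,9)$ or $M_{10}$, and the elements outside $A_6$ there have orders $2,8,10$ or $8$, respectively.) The further step ``any such element of odd order $o$ can be written as $x\cdot(\phi y)$ ... giving $[x,\phi y]$ of order dividing $o$'' is also not a valid deduction about commutators. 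The fix is the one the paper uses: take $x$ to be a $6$-cycle (still odd), so that $x^{\phi}$ has cycle type $(3,2,1)$, and note $[x,\phi y]=x^{-1}(x^{\phi})^{y}$; with $x=(123456)$, $x^{\phi}=(142)(56)$ up to conjugacy, and $y=(456)$, this product is $(143)(256)$ of order $3$. Your general framework (commutator equals $x^{-1}$ times an $S_6$-conjugate of $x^{\phi}$, then hunt for an odd cycle type) is sound; the error is only in which conjugacy class of odd permutations you start from, but without that correction no witness exists and the proof is incomplete.
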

\begin{proof} Let $x=(123456)\in S_6\setminus A_6$. Up to a conjugation, we have $x^\phi=(142)(56)$. Let $y=(456)$. Observe that 
	$[x,\phi y]=x^{-1}y^{-1}x^\phi y=(143)(256),$
	and therefore $[x,\phi y]$ has order 3, as desired. 
\end{proof} 
\begin{proposition}\label{derivsubofnonsol}
The commutator subgroup of an insoluble CPPO-group is perfect. 
\end{proposition}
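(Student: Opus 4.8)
First I would set up the skeleton. Since $G$ is insoluble, $G/R(G)$ is a nontrivial CPPO‑group with trivial soluble radical, so by Lemmas~\ref{crofsemisimple} and~\ref{crquotientofsemisimple} its derived subgroup is simple nonabelian, hence perfect, giving $(G/R(G))''=(G/R(G))'$. Thus $G'$ and $G''$ have the same image in $G/R(G)$, so by Dedekind's law $G'=G''\,(G'\cap R(G))$, and therefore $G'$ is perfect as soon as $G'\cap R(G)\leqslant G''$. So I would suppose the proposition fails, take a counterexample $G$ of least order, and aim to produce a commutator of $G$ whose order is divisible by two distinct primes. Note $R(G)\neq 1$ (else $G'=G''$), so $G$ has an elementary abelian minimal normal subgroup $V\leqslant R(G)$, say a $p$‑group.

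Next I would pin down the structure of $G$. As $G/V$ is a smaller insoluble CPPO‑group, minimality gives that $(G/V)'=G'V/V$ is perfect, i.e.\ $G'V=G''V$; since $V$ is minimal normal, $G'\cap V\in\{1,V\}$, and $G'\cap V=1$ would force $G'=G''$ (using $G''\leqslant G'$), so $V\leqslant G'$ and hence $G'=G''V$. Minimality of $V$ also gives $G''\cap V=1$, so $G'=G''\times V$ with $V=Z(G')$. Applying minimality to $G''$ (a smaller insoluble CPPO‑group) shows $G''$ is perfect; and applying it to $G/R(G'')$ — whose derived subgroup $\bigl(G''/R(G'')\bigr)\times V$ is visibly not perfect — forces $R(G'')=1$, so by Corollary~\ref{perfectplustrivialradical} the group $T:=G''$ is a simple EPPO‑group. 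A look at the normal subgroups of $G'=T\times V$ then yields $R(G)=C_G(T)$, $R(G)\cap G'=V$, and $R(G)/V$ abelian, hence $[R(G),R(G)]\leqslant V$.

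It then remains to contradict the CPPO property. If $R(G)$ is non‑abelian I would pick $r_1,r_2\in R(G)$ with $1\neq[r_1,r_2]\in V$ and, using that every element of $T$ is a commutator (Proposition~\ref{LiObShTi1}) together with $|\pi(T)|\geqslant 2$, pick $t_1,t_2\in T$ with $[t_1,t_2]$ of prime order $r\neq p$; since $r_1,r_2\in C_G(T)$ one has $[r_1t_1,r_2t_2]=[r_1,r_2][t_1,t_2]$, a commutator of $G$ of order $pr$, a contradiction. So $R(G)$ is abelian. Let $\varpi\colon G'=T\times V\to V$ be the projection; it is $G$‑equivariant and the elements $\varpi([x,y])$ generate $V\neq1$, so some commutator $[a,b]$ has $\varpi([a,b])=v\neq1$. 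Writing $[a,b]=sv$ with $s\in T$, the facts that $[a,b]$ has prime power order, $s$ commutes with $v$, and $\langle s\rangle\cap\langle v\rangle=1$ force $s$ to have $p$‑power order. For every $c\in T$,
$$[ac,b]=\bigl(c^{-1}s\,c^{b}\bigr)v ,$$
so as $c$ runs over $T$ the $T$‑component of $[ac,b]$ sweeps out the whole twisted conjugacy class $\{c^{-1}s\,c^{b}:c\in T\}$ while its $V$‑component stays equal to $v\neq1$; hence if this class contains an element of order prime to $p$, the corresponding $[ac,b]$ has order divisible by two primes — the desired contradiction. (When the commutator with nontrivial $V$‑component instead comes from $[G,R(G)]=V$, an entirely analogous perturbation by elements of $T$ reduces matters to the same kind of statement.)

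The main obstacle is therefore to rule out that the twisted conjugacy class of a $p$‑element of $T$, under the automorphism of $T=G''$ induced by $b$, consists solely of $p$‑elements. One first checks that this automorphism is \emph{outer}: otherwise $b$ can be replaced, modulo $C_G(T)$, by an element of $T$, which forces the $V$‑component of $[a,b]$ to be trivial. The remaining assertion I would settle by going through the explicit list of simple EPPO‑groups in Proposition~\ref{classsimpleCPPO}: for $\mathrm{PSL}(2,q)$ and the Suzuki groups it is a direct computation with conjugacy classes and outer automorphisms, while the awkward configurations attached to $A_6\cong\mathrm{PSL}(2,9)$ and $\mathrm{PSL}(3,4)$ are exactly what Lemmas~\ref{a6case} and~\ref{psl34case}, assisted by Lemma~\ref{existelemabelqsub}, are designed to dispose of.
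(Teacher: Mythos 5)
Your structural reduction is sound and in places tidier than the paper's: the minimal\nobreakdash-counterexample argument giving $G'=T\times V$ with $T=G''$ a simple EPPO-group and $V$ a minimal normal $p$-subgroup, the identity $[r_1t_1,r_2t_2]=[r_1,r_2][t_1,t_2]$ forcing $R(G)=C_G(T)$ to be abelian, and the perturbation formula $[ac,b]=(c^{-1}s\,c^{b})v$ are all correct. The endgame, however, has two genuine gaps. First, your claim that $b$ must induce an \emph{outer} automorphism of $T$ is not justified: if $b=tb_0$ with $t\in T$ and $b_0\in C_G(T)=R(G)$, then $[a,b]=[a,b_0][a,t]^{b_0}$, and the $V$-component of $[a,b]$ is $[a,b_0]\in [G,R(G)]\cap G'\leq V$, which need not vanish unless you already know $R(G)\leq Z(G)$ --- and you have only shown $R(G)$ abelian. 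This matters: if $b$ acts on $T$ as conjugation by $s$ itself, then $c^{-1}s\,c^{b}=s$ for every $c\in T$ and the perturbation produces nothing new. The paper's proof spends most of its effort precisely on establishing $R(G)=Z(G)$, by showing that $ag$ centralizes every $r'$-subgroup of $H$ that it normalizes and then combining the Frattini argument with the Frobenius normal complement theorem; that step has no counterpart in your argument.

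Second, the assertion you defer to ``a direct computation'' --- that for every simple EPPO-group $T$, every outer automorphism $\beta$ and every $p$-element $s$, the twisted class $\{c^{-1}s\,c^{\beta}:c\in T\}$ contains an element of order divisible by a prime other than $p$ --- is at that point the entire content of the proposition, and it is left unverified for all eight groups, all their outer automorphisms and all relevant $p$ and $s$ (note that $A_6\langle\beta\rangle$ can be $M_{10}$, itself an EPPO-group, so the verification is not vacuous). The paper never needs this general statement: having shown $R(G)=Z(G)$, it deduces that $G/H$ is nilpotent of class $2$ with $\overline{G}/\overline{H}$ the Klein four-group, whence $H\cong A_6$ or $H\cong PSL(3,4)$ by Lemma \ref{outautogroup}, and only these two configurations are settled explicitly (Lemma \ref{a6case} for $A_6$; a splitting result together with the Baer--Suzuki theorem for $PSL(3,4)$). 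A small additional slip: minimality applied to $G''$ yields that $(G'')'$ is perfect, not that $G''$ is; the fact you want follows instead from $G''=(G''\times V)'=(G'')'$.
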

\begin{proof}
	Suppose that the result is false and let $G$ be a counterexample of minimal order. Clearly, $G$ is not perfect so the minimality of $|G|$ implies that $H=G''$ is a perfect group. Moreover, the minimality of $|G|$ also implies that $R(H)=1$. In short, $H$ is a nontrivial perfect CPPO-group with trivial soluble radical. Corollary \ref{perfectplustrivialradical} shows that $H$ is a nonabelian simple group. 
	
	Observe that $[R(G),H]\leq R(G)\cap H=1$, that is, $R(G)\leq C_G(H)$. We claim that $R(G)=Z(G)$. Suppose that this is not the case and take elements $a\in G$ and $b\in R(G)$ for which $[a,b]$ is a nontrivial $r$-element for some prime number $r$. 
	
Note that if $g\in H$ has the property that $ag$ normalizes an $r'$-subgroup $K$ of $H$, then $ag$ centralizes $K$. Indeed, for any $h\in K$ the equality
	\begin{equation}\label{cf1}
		[bh,ag]=[b,a][h,ag] 
	\end{equation}
	holds since $R(G)\leq C_G(H)$. It follows from (\ref{cf1}) that $[bh,ag]$ is an $r$-element. This implies that $[h,ag]$ is an $r$-element. On the other hand, $ag$ normalizes $K$ so that $[h,ag]\in K$ is an $r'$-element. It follows that $[h,ag]=1$ and so $[K,ag]=1$. 
	
We will now prove that $R(G)=Z(G)$. Let $q\in \pi(H)\setminus\{r\}$, and let $K$ be a nontrivial $q$-subgroup of $H$. Choose a Sylow $q$-subgroup $Q$ of $H$ containing $K$. By the Frattini argument there is an element $g\in H$ such that $ag\in N_G(Q)$. In view of the above we conclude that $ag\in C_G(Q)$ and so $ag\in C_G(K)$. For an  arbitrary element $x\in N_H(K)$ we have $agx\in N_G(K)$ and the above argument shows that $agx\in C_G(K)$. It follows that $x\in C_H(K)$. Thus, we proved that $N_H(K)=C_H(K)$ for any $q$-subgroup $K$ of $H$. The Frobenius theorem now tells us that $H$ contains a normal $q$-complement, which is impossible since $H$  simple. This contradiction shows that $R(G)=Z(G)$, as claimed. 

Taking into account Lemma \ref{crquotientofsemisimple} observe that $R(G)\neq1$. Let $M$ be a soluble minimal normal subgroup of $G$. The minimality of $|G|$ implies that $G'M=HM$. Since $H<G'$, it follows that $M\leq G'$ and $G'=H\times M$. Since $M$ is central, the order of $M$ is a prime, say $p$. The minimality of $|G|$ implies that $M$ is a unique minimal soluble normal subgroup.  We deduce that $Z(G)$ is a $p$-group. Moreover, as $M\leq Z(G)$ we have
$G'/H\leq Z(G/H)$, that is, $G/H$ is nilpotent of class 2. 
	
Set $\overline{G}=G/Z(G)$. Observe that $R(\overline{G})=1$ and so by Lemma \ref{crquotientofsemisimple} $\overline{G'}=\overline{H}$. Moreover, $\overline{G}/\overline{H}$ is either cyclic or the Klein four-group. 
	
If $\overline{G}/\overline{H}$ is cyclic, then there is an element $g\in G$ such that $G=\langle g\rangle Z(G)H$. In this case $G/H$ is abelian and so $G'=H$, a contradiction. 
	
Therefore we assume that $\overline{G}/\overline{H}$ is the four-Klein group. It follows from Lemma \ref{outautogroup} that either $H\cong A_6$ or $H\cong PSL(3,4)$. 	
Let $a,b\in G$ be elements for which $G=\langle a,b\rangle Z(G)H$. Here $[a,b]\notin H$ since $G/H$ is nonabelian. Thus, the equality $G'=H\times M$ implies that $[a,b]$ is a $p$-element. This happens for every choice of $a$ and $b$. On the other hand, $G/H$ is nilpotent of class 2. Since $\overline{G}/\overline{H}$ is the Klein four-group, it follows that $p=2$. We can replace the element $b$ by $bh$, where $h\in H$. Therefore $[a,bh]$ is a 2-element for any $h\in H$.
	
If $H\cong A_6$, by Lemma \ref{a6case} $a$ and $b$ can be chosen in such a way that there exists $h\in H$ with $[\overline{a},\overline{b}\overline{h}]$ of odd order. This rules out the case $H\cong A_6$.

Therefore $H$ is isomorphic to $PSL(3,4)$. According to \cite{luma} in this case $Aut\, H$ splits over $H$, that is, $\overline{G}=\overline{H}\rtimes \langle\overline{a},\overline{b}\rangle$ with $\langle\overline{a},\overline{b}\rangle$ isomorphic to the Klein group. The Baer-Suzuki theorem \cite[Theorem 3.8.2]{gorenstein2007finite} guarantees that there is an element $\overline g\in \overline G$ such that $\langle \overline{a},\overline{a}^{\overline{g}}\rangle$ is not nilpotent. Therefore there exists an  odd order element $\overline{y}\in \langle \overline{a},\overline{a}^{\overline{g}}\rangle$. Since $\overline{G}/\overline{H}$ is a 2-group, it follows that $y$ can be chosen in $H$. Observe that $\langle \overline{a},\overline{a}^{\overline{g}}\rangle$ is a dihedral group, whence $\overline{y}^{\overline{a}}=\overline{y}^{-1}$. Consequently, $1\neq[\overline{y},\overline{a}]=\overline{y}^{-2}$ has odd order. On the other hand, $[a,b]\in Z(G)$ is a $2$-element. The equality $[a,by]=[a,y][a,b]$ implies that $[a,y]$ is a 2-element. This is a contradiction. The proof is complete. 
\end{proof} 
\begin{corollary}\label{radglinhaasaderiv}
	Let $G$ be an insoluble CPPO-group. Then 
	$R(G')=[G',R(G)]$.
\end{corollary}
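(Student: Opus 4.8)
The plan is to prove the two inclusions separately, the easy one directly and the other by passing to a convenient quotient.

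First, $[G',R(G)]$ is contained in $R(G)$, hence soluble, and it is normalised by $G$ (being a commutator of two normal subgroups), hence normalised by $G'$; since it is also contained in $G'=[G,G]$, it is a soluble normal subgroup of $G'$, so $[G',R(G)]\leq R(G')$. This already gives one inclusion.

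For the reverse inclusion, put $N=[G',R(G)]$ and pass to $\bar G=G/N$, which is again a CPPO-group. Since $G$ is insoluble and $N$ is soluble, $G'$ is insoluble, and by Proposition \ref{derivsubofnonsol} $G'$ is perfect, so $\bar G'=G'/N$ is a nontrivial perfect group. By construction $[\bar G',\overline{R(G)}]=1$, that is, $\overline{R(G)}\leq C_{\bar G}(\bar G')$. Moreover, since $N\leq R(G)$ and an extension of a soluble group by a soluble group is soluble, one gets $R(\bar G)=\overline{R(G)}$. It now suffices to prove that $R(\bar G')=1$: the image of $R(G')$ in $\bar G$ is a soluble normal subgroup of $\bar G'$, so $R(\bar G')=1$ forces $R(G')\leq N$, and together with the first inclusion this yields $R(G')=[G',R(G)]$.

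To prove $R(\bar G')=1$, observe that $R(\bar G')$ is characteristic in $\bar G'$, hence normal in $\bar G$, so $R(\bar G')\leq R(\bar G)=\overline{R(G)}\leq C_{\bar G}(\bar G')$; thus $R(\bar G')\leq Z(\bar G')$, and as $Z(\bar G')$ is an abelian normal subgroup of $\bar G'$ we obtain $R(\bar G')=Z(\bar G')$. Now $\bar G'/R(\bar G')$ is a nontrivial perfect CPPO-group with trivial soluble radical, so by Corollary \ref{perfectplustrivialradical} it is a simple EPPO-group. Hence $\bar G'/Z(\bar G')$ is simple and $\bar G'$ is perfect; if $Z(\bar G')\neq1$ this makes $\bar G'$ a quasisimple CPPO-group, and Lemma \ref{quasisimpleissimple} then forces $\bar G'$ to be a simple EPPO-group, so $Z(\bar G')=1$ — a contradiction. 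Therefore $Z(\bar G')=R(\bar G')=1$, which completes the argument.

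The whole proof is essentially bookkeeping built on Proposition \ref{derivsubofnonsol}, Corollary \ref{perfectplustrivialradical} and Lemma \ref{quasisimpleissimple}; the only slightly delicate point is the passage to $\bar G$, namely the identity $R(G/N)=R(G)/N$ for a soluble normal subgroup $N$ and the observation that modding out by $N=[G',R(G)]$ forces $\overline{R(G)}$ to centralise $\bar G'$, which is exactly what turns the soluble radical of $\bar G'$ into a central subgroup so that quasisimplicity can be invoked.
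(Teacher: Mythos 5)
Your proof is correct and follows essentially the same route as the paper: both arguments rest on Proposition \ref{derivsubofnonsol}, Corollary \ref{perfectplustrivialradical} and Lemma \ref{quasisimpleissimple}, and both amount to recognising $G'/[G',R(G)]$ as a quasisimple CPPO-group whose centre must therefore vanish. You merely spell out in more detail the bookkeeping (the inclusion $[G',R(G)]\leq R(G')$, the identification $R(\bar G')=Z(\bar G')$) that the paper's proof leaves implicit.
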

\begin{proof} By Proposition \ref{derivsubofnonsol} $G'$ is a perfect group. Thus, $G'/R(G')$ is a nontrivial perfect CPPO-group with trivial soluble radical. Corollary \ref{perfectplustrivialradical} shows that $G'/R(G')$ is a simple group. On the other hand, observe that $G'/[G',R(G)]$ is a quasisimple CPPO-group. Lemma \ref{quasisimpleissimple} shows that $G'/[G',R(G)]$ is simple and so $R(G')=[G',R(G)]$. 
\end{proof} 
The following lemma will be useful.
\begin{lemma}\label{solubleperfect} Let $G$ be a perfect group and $N\leq G$ a soluble normal subgroup such that $G/N$ is simple. Let $Q$ be a nontrivial subgroup of $G$ such that $Q\not\leq N$. Then $G=[G,Q]$.
\end{lemma}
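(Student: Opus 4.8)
The plan is to put $L=[G,Q]$ and prove $L=G$ by showing that the quotient $G/L$ is simultaneously perfect and soluble, hence trivial. Two routine facts underlie everything: first, $L=[G,Q]$ is a normal subgroup of $G$, since $[G,Q]\unlhd\langle G,Q\rangle=G$; second, under the canonical projection $G\to G/N$ the image of $L$ is exactly $[G/N,QN/N]$, because both subgroups are generated by the images of the elements $[g,q]$ with $g\in G$ and $q\in Q$.

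I would then argue in the following order. First note that $G/N$ is a \emph{nonabelian} simple group: if $G/N$ were abelian then $G'\leq N$, and as $G$ is perfect this would give $G=G'\leq N$, contradicting that $G/N$ is a nontrivial simple group; in particular $Z(G/N)=1$. Since $Q\not\leq N$, the subgroup $QN/N$ is nontrivial, so the triviality of $Z(G/N)$ forces $[G/N,QN/N]\neq 1$. But $[G/N,QN/N]$ is normal in the simple group $G/N$, hence $[G/N,QN/N]=G/N$, and by the compatibility of commutators with quotients this means $LN=G$. Consequently $G/L=LN/L\cong N/(N\cap L)$ is soluble, being a quotient of the soluble group $N$; on the other hand $G/L$ is perfect, being a quotient of the perfect group $G$. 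A group that is both perfect and soluble is trivial, so $G=L=[G,Q]$, as claimed.

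I do not expect any genuine obstacle here: the argument is short, and the only points needing a word of justification are the normality of $[G,Q]$ in $G$ and the fact that passing to $G/N$ sends $[G,Q]$ to $[G/N,QN/N]$, both of which are standard. The one place to be slightly careful is to record explicitly that $G/N$ is nonabelian (equivalently, that $Z(G/N)=1$), which is what guarantees $[G/N,QN/N]\neq 1$ once we know $QN/N\neq 1$.
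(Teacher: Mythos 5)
Your proof is correct and follows essentially the same route as the paper's: observe that $[G,Q]$ is normal, deduce $G=[G,Q]N$ from the simplicity of $G/N$, and conclude that $G/[G,Q]$ is both perfect and soluble, hence trivial. The one refinement you add --- checking explicitly that $G/N$ is nonabelian (using that $G$ is perfect) so that $[G/N,QN/N]\neq1$ --- is a point the paper leaves implicit, and it is worth recording.
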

\begin{proof} Note that $[G,Q]$ is a normal subgroup of $G$. Since $QN/N\neq1$ and $G/N$ is simple, it follows that $G=[G,Q]N$. We deduce that $G/[G,Q]$ is a perfect soluble group, that is $G/[G,Q]=1$. Hence, $G=[G,Q]$. 
\end{proof}
\begin{proposition}\label{thmnonsolubleCPPO} Let $G$ be an insoluble CPPO-group. Then $R(G')\leq O_2(G)$ and $G'/R(G')$ is a simple EPPO-group. 
\end{proposition}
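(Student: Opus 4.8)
The second assertion is immediate from what has been established: $R(G')$ is by definition the soluble radical of $G'$, so $G'/R(G')$ has trivial soluble radical; it is a nontrivial CPPO-group (a quotient of the subgroup $G'$) which is perfect by Proposition~\ref{derivsubofnonsol}, hence a simple EPPO-group by Corollary~\ref{perfectplustrivialradical}.

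For the first assertion, recall that $R(G')=[G',R(G)]$ is normal in $G$ by Corollary~\ref{radglinhaasaderiv}, so it suffices to prove that $R(G')$ is a $2$-group; I would argue by induction on $|G|$. If $G'\neq G$, then $G'$ is an insoluble CPPO-group of smaller order, so by induction $R(G'')\leq O_2(G')$, and since $G''=G'$ by Proposition~\ref{derivsubofnonsol} we are done. Thus assume $G=G'$ is perfect, put $N=R(G')=R(G)$, and suppose for contradiction that $N$ is not a $2$-group. If some minimal normal subgroup $M$ of $G$ contained in $N$ were a $2$-group, then $G/M$ would be a perfect insoluble CPPO-group of smaller order, whence $R(G/M)=N/M$ would be a $2$-group by induction, forcing $N$ to be a $2$-group; so every minimal normal subgroup of $G$ inside $N$ is an elementary abelian $p$-group with $p$ odd, and we fix such a subgroup $V$. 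Next I would rule out $V\leq Z(G)$: then $|V|=p$, the inductive hypothesis applied to $G/V$ gives that $N/V$ is a $2$-group, so $V$ is a central Sylow $p$-subgroup of $N$ and $N=V\times O_2(G)$ with $V\cong\Z_p\leq Z(G)$; consequently $G/O_2(G)$ is a perfect CPPO-group whose centre is $V$ and whose quotient by $V$ is the simple group $G/N$, i.e.\ a quasisimple CPPO-group, which by Lemma~\ref{quasisimpleissimple} is a simple EPPO-group and therefore has trivial centre — contradicting $V\neq1$. Hence $C:=C_G(V)$ is a proper normal subgroup of $G$; as $CN/N$ is a normal subgroup of the simple group $G/N$, either $CN=G$, which is impossible since $G/C\cong N/(N\cap C)$ would then be a nontrivial soluble quotient of the perfect group $G$, or $C\leq N$.

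It remains to contradict the CPPO-property in the situation $C\leq N$. If $C\neq N$, then, since $V\leq C\neq1$, the inductive hypothesis applies to the perfect insoluble CPPO-group $G/C$ and yields that $N/C$ is a $2$-group; one then descends further to the principal case $C=N$, in which $S:=G/N$ acts faithfully on $V$. In that case, $S$ — being a simple EPPO-group — contains a Klein four subgroup, which acts coprimely on the nontrivial $p$-group $V$, so by Lemma~\ref{cc}(v) some involution $t\in S$ satisfies $C_V(t)\neq1$. By the Ore conjecture (Proposition~\ref{LiObShTi1}) we may write $t=[\overline x,\overline y]$ with $\overline x,\overline y\in S$; lifting to $x,y\in G$, the commutator $a:=[x,y]$ maps onto $t$, so $a$ is a nontrivial $2$-element of the CPPO-group $G$ whose action on $V$ is that of $t$, and $C_V(a)=C_V(t)\neq1$. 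For $v\in V$ one computes $[vx,y]=[v,y]^{x}a=u_0a$ with $u_0\in V$; writing $N_a$ for the endomorphism $w\mapsto\sum_{i=0}^{|a|-1}w^{a^{i}}$ of $V$, we have $[vx,y]^{|a|}=N_a(u_0)$, and $N_a$ is multiplication by $|a|$ (which is prime to $p$) on $C_V(a)$ and zero on $[V,a]$. Hence, as soon as $v$ is chosen so that $u_0=[v,y]^{x}$ has a nonzero $C_V(a)$-component — equivalently $[V,y^{x}]\not\subseteq[V,t]$ — the element $[vx,y]^{|a|}=N_a(u_0)$ is a nontrivial $p$-element while $[vx,y]$ maps modulo $V$ onto the $2$-element $a$; thus $[vx,y]$ is a commutator of $G$ whose order is divisible by $2$ and by $p$, contrary to $G$ being a CPPO-group.

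The principal obstacle is the last step, namely securing the choice of $v$, i.e.\ that the commutator presentation $t=[\overline x,\overline y]$ can be taken with $[V,y^{x}]\not\subseteq[V,t]$: this uses that a simple group is never a Frobenius complement and is where the Frobenius-theorem mechanism behind Lemma~\ref{existelemabelqsub} (applied to $G/C$, or to $G$ relative to $N$), together with the explicit list of simple EPPO-groups and their small modules, comes into play — one locates an involution $t$ with $C_V(t)\neq1$ jointly with a partner realising the required translate in $V$. A secondary point, routine but to be spelled out, is the bookkeeping of the reductions — that $G/M$, $G/V$, $G/C$ remain perfect insoluble CPPO-groups with soluble radical the image of $N$, and the final descent from $C\leq N$ to $C=N$.
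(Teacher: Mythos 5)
Your overall framework (reduce to $G$ perfect, pass to a minimal normal $p$-subgroup $V$ with $p$ odd, eliminate $V\leq Z(G)$ via Lemma \ref{quasisimpleissimple}, and then produce a commutator of order divisible by $2p$) is reasonable, and the norm-map computation showing that $[vx,y]=u_0a$ has order divisible by $2p$ whenever $u_0=[v,y]^x$ has nontrivial $C_V(a)$-component is correct. But there are two genuine gaps, and between them they account for most of the real content of the proposition. First, the passage from $C:=C_G(V)\leq N$ to the ``principal case'' $C=N$ is not a routine descent. Since you have already shown that no minimal normal subgroup of $G$ inside $N$ is a $2$-group, we get $O_2(G)=1$; as $V\leq Z(C)$ and $C/V$ is a $2$-group, $C$ is nilpotent and $C=V\times O_2(C)=V$. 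So $C=N$ forces $N=V$, and the remaining configuration is $V=C<N$ with $N/V$ a nontrivial $2$-group. Induction on $G/C=G/V$ kills $V$ and says nothing about the action on $V$, so there is no descent available. This configuration is precisely the harder half of the paper's argument: there one takes $q\in\pi(G)\setminus\{2,p\}$, uses Lemma \ref{existelemabelqsub} to get an elementary abelian $q$-subgroup $Q=[Q,a]$, forms $L=\langle a\rangle QR(G)$, and invokes the Fitting height bound $h(L)\leq3$ from Theorem \ref{main1} to force $[Q,S]\leq V$ for a Sylow $2$-subgroup $S$ of $R(G)$, whence $R(G)/V$ is central in $G/V$ and Lemma \ref{quasisimpleissimple} gives the contradiction. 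Nothing in your outline substitutes for this step.

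Second, in the principal case $N=V$ your argument hinges on finding a commutator presentation $t=[\overline x,\overline y]$ of an involution with $C_V(t)\neq1$ \emph{and} $[V,\overline y^{\overline x}]\not\leq[V,t]$, which you explicitly leave unproved. There is no evident general reason such a presentation exists; establishing it would apparently require working through the eight simple EPPO-groups and their faithful odd-characteristic modules, which is a substantial project in itself. The paper sidesteps the Ore conjecture entirely at this point: Lemma \ref{existelemabelqsub} with $q=2$ produces a noncyclic elementary abelian $2$-subgroup $Q=[Q,a]$ with $a$ of odd prime power order, Lemma \ref{solubleperfect} gives $G=[G,Q]$ so that $Q$ cannot centralize $V$, and Lemma \ref{aaa} applied to the tower $(\langle a\rangle,Q,V)$ yields a commutator of non-prime-power order. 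I would suggest adopting that mechanism for both cases; as written, your proof is complete only up to the reduction to $C_G(V)\leq N$.
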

\begin{proof}
We already know that $G'/R(G')$ is a simple EPPO-group. Therefore, it suffices to show that $R(G')\leq O_2(G)$. Assume that this is false and let $G$ be a counterexample of minimal order. By Proposition \ref{derivsubofnonsol} $G'$ is a perfect group so the minimality of $|G|$ shows that $G$ is perfect. It follows that $G/R(G)$ is a nonabelian simple group.
	
Let $M$ be a soluble minimal normal subgroup of $G$. Then $M$ is an elementary abelian $p$-subgroup for some prime number $p$. The minimality of $|G|$ yields that $R(G)/M$ is a 2-group and, in particular, $p\neq2$. Hence, $M=O_{p}(G)$ is a unique soluble minimal normal subgroup of $G$. In particular, $M=F(G)$. 
	
Suppose first that $M=R(G)$. Lemma \ref{existelemabelqsub} says that there exists an elementary abelian 2-subgroup $Q$ and an element $a\in G$ of odd prime power order such that $Q=[Q,a]$. By virtue of Lemma \ref{solubleperfect} we deduce that $G=[G,Q]$. Consequently, $Q$ does not centralize $R(G)$, otherwise we would have $R(G)=Z(G)$ and Lemma \ref{quasisimpleissimple} would imply that $R(G)=1$, a contradiction. Observe that $(\langle a\rangle,Q,R(G))$ is a tower with abelian factors. Moreover, $Q$ is an elementary abelian noncyclic group. Combining this with Lemma \ref{aaa} we deduce that $G$ has a commutator whose order is not a prime power, a contradiction. We conclude that $M$ is a proper subgroup of $R(G)$ and hence $R(G)/M$ is a nontrivial 2-group. 
	
Since $O_2(G)=1$, the subgroup $M$ is not central in $R(G)$. The minimality of $M$ implies that $M=[R(G),M]=[G,M]$. Again, using that $G$ is perfect and that $M\nleq Z(G)$ we conclude that $C_G(M)=M$.
	
Let $q\in\pi(G)\setminus\{2,p\}$. By Lemma \ref{existelemabelqsub}, there exists an elementary abelian $q$-subgroup $Q$ and a prime power order element $a\in G$, whose order is prime to $q$, such that $Q=[Q,a]$. Set $L=\langle a\rangle QR(G)$. Since $M$ is self-centralizing, it follows that $(\langle a\rangle,Q,M)$ is a tower of $L$. Now, Lemma \ref{heightequaltower} and Theorem \ref{main1} show that $h(L)=3$. 
	
Taking into account that $Q=[Q,a]$ we observe that $Q\leq \gamma_\infty(L)\leq F_{2}(L)$. Moreover, $M\leq F(L)$ and so $F(L)$ is a $p$-group. Since $R(G)/F(L)$ is a 2-group, $R(G)\leq F_2(L)$. Here $F_2(L)$ denotes the second term of the upper Fitting series of $L$. Let $S$ be a Sylow 2-subgroup of $R(G)$. The subgroups $Q$ and $S$ are both contained in $F_2(L)$ and so $[Q,S]\leq F(L)$. Hence, $[Q,S]\leq M$. This means that $Q$ centralizes $R(G)$ modulo $M$. Keeping in mind that in view of Lemma \ref{solubleperfect} $G=[G,Q]$ we now deduce that $G$ centralizes $R(G)$ modulo $M$. In other words, we proved that $R/M=Z(G/M)$. By Lemma \ref{quasisimpleissimple} we conclude that $R/M=1$, that is, $R=M$, a contradiction. The proof is complete. 
\end{proof}

Remark that the combination of Proposition \ref{derivsubofnonsol}, Corollary \ref{radglinhaasaderiv}, and Proposition \ref{thmnonsolubleCPPO} establishes Theorem \ref{main2}.

\end{document}